\documentclass[preprint,12pt]{elsarticle}

\usepackage{amsmath,amssymb,amsfonts,amsthm,mathrsfs}
\usepackage{geometry}
\usepackage{hyperref}
\usepackage{booktabs}
\usepackage{array}
\usepackage{tabularx}
\usepackage{enumitem}
\usepackage{mathtools}

\hypersetup{
  colorlinks=false,
  hidelinks
}

\theoremstyle{plain}
\newtheorem{theorem}{Theorem}[section]
\newtheorem{lemma}[theorem]{Lemma}
\newtheorem{proposition}[theorem]{Proposition}
\newtheorem{corollary}[theorem]{Corollary}

\theoremstyle{definition}

\theoremstyle{remark}
\newtheorem{remark}[theorem]{Remark}
\newtheorem{example}[theorem]{Example}

\newcommand{\R}{\mathbb{R}}
\newcommand{\Z}{\mathbb{Z}}
\newcommand{\cP}{\mathcal{P}}
\newcommand{\Ehr}{\mathrm{Ehr}}
\newcommand{\vol}{\mathrm{vol}}

\title{Alternating adjacent-sum polytopes: transfer matrices and Ehrhart series}

\author[hn]{Xinru Jiang}
\ead{chiang\_11110@163.com}

\author[csu]{Suzhen Wen}
\ead{wensuzhencs@csu.edu.cn}

\author[hn]{Yueming Zhong\corref{cor1}}
\ead{zhongyueming107@gmail.com}
\cortext[cor1]{Corresponding author.}

\address[hn]{School of Mathematics and Statistics, Hainan University, Haikou 570228, P.R. China}
\address[csu]{School of Mathematics and Statistics, HNP-LAMA, Central South University, Changsha 410083, P.R. China}

\begin{document}

\begin{frontmatter}

\begin{abstract}
We study a period-two family of adjacent-sum lattice polytopes, defined by
consecutive-coordinate inequalities whose bounds alternate between $s$ and
$s{+}1$.  This family is a particularly simple non-uniform deformation of the classical
uniform adjacent-sum model that still admits an explicit transfer-matrix
analysis.

A first structural difference from the uniform model is a \emph{parity
split}: the lattice-point counts in odd and even dimensions satisfy two
genuinely different rational generating functions with a common denominator.
The odd-dimensional generating function satisfies a M\"{o}bius recurrence
and admits a closed form in terms of the arctangent; the even-dimensional
series satisfies a coupled recurrence.  Both share an explicit dominant pole
that governs the exponential growth rate in each parity class.

As a companion result, closing the path by adding a cyclic constraint on the
first and last coordinates turns the lattice-point count into a matrix trace.
Both cyclic parity classes admit rational generating functions with the same
denominator as the open-path series.  The even-dimensional cyclic series has
a Jacobi-derivative form, while the odd-dimensional one is expressed through
an explicit anti-diagonal cofactor numerator.

On the Ehrhart side, we compute dimension-generating functions for fixed
dilations, derive linear recurrences for the lattice-point counts, obtain
rational volume-generating functions, and establish a bivariate identity
for the coefficients of the $h^*$-polynomials.

For the parameter value $s=1$, the even-dimensional polytopes decompose as
Cartesian products of two-dimensional unimodular simplices, yielding explicit
counting formulas and the Gorenstein property in every even dimension.  For
every $s\ge2$ the Gorenstein property fails in some even dimension; the failure
occurs in dimension two for all $s\ne3$, and in dimension four for $s=3$.
\end{abstract}

\begin{keyword}
lattice polytope \sep Ehrhart series \sep transfer matrix \sep
adjacent-sum inequalities \sep $h^*$-polynomial \sep Gorenstein polytope
\MSC[2020] 52B20 \sep 05A15 \sep 05A19 \sep 52B05
\end{keyword}

\end{frontmatter}
\thispagestyle{empty}

\section{Introduction}\label{sec-intro}

The enumeration of integer points in lattice polytopes is a central theme in
algebraic and enumerative combinatorics.  If $\cP\subset\R^d$ is a lattice
polytope, Ehrhart's theorem \cite{Ehrhart1,Ehrhart} asserts that
\[
L_{\cP}(m):=\#(m\cP\cap\Z^d), \qquad m\in\Z_{\ge0},
\]
is a polynomial in $m$ of degree $d$.  Its generating function has the form
\[
\Ehr_{\cP}(z)=1+\sum_{m\ge1}L_{\cP}(m)z^m
=\frac{h^*_{\cP}(z)}{(1-z)^{d+1}}.
\]
Following the work of Stanley, Hibi, and others, the numerator
$h^*_{\cP}(z)$ has become a fundamental invariant of a lattice polytope: its
coefficients are nonnegative integers \cite{Stanley1978}, their sum is the
normalized volume, and palindromicity is equivalent to the Gorenstein property
of the Ehrhart ring \cite{hibi-1,Stanley1978}; see also \cite{Bruns}.  Problems
concerning unimodality, log-concavity, and real-rootedness of $h^*$-polynomials
remain active in Ehrhart theory \cite{beck,BrandenSolus,Braun}.

This paper concerns lattice polytopes defined by local constraints.  Such
families occur naturally in the study of order and chain polytopes and their
variants \cite{ArdilaBliem,OhsugiTsuchiyaEnriched,Ramassamy,StanleyTwo}.  From
a graph-theoretic viewpoint, inequalities of the form
\[
x_i+x_{i+1}\le c_i
\]
are capacitated stable-set-type constraints on a path.  The homogeneous case
$c_i=s$ is a one-step finite-state system, while the period-two choice
$c_i=s,s+1,s,s+1,\ldots$ is the basic non-homogeneous case whose
non-constant local transfer matrix still admits an explicit analysis via a
two-step product structure.
A particularly tractable model is the uniform adjacent-sum family
\[
x_i+x_{i+1}\le s\quad(1\le i\le d-1),\qquad x_j\ge0,
\]
which has been studied from several angles: B\'{o}na--Ju \cite{Bona2} solved
the magic-square-type linear inequalities that first motivated the family;
B\'{o}na--Ju--Yoshida \cite{Bona} extended the enumeration to related
weighted-graph problems; and Xin--Zhong \cite{Xin} gave closed-form Kekul\'{e}
number identities via Chebyshev polynomials, using a transfer-matrix structure
closely related to the one exploited here.  Transfer matrices are ideally suited to locally
constrained systems because each new coordinate appends one inequality,
translating into a matrix product; the resulting geometric-series formula for
the resolvent $(I-yC)^{-1}$ directly produces rational generating functions,
and Engstr\"{o}m--Kohl \cite{EngstromKohl} showed formally how to translate
this resolvent-based counting into Ehrhart series and $h^*$-data.

We investigate the following period-two non-uniform deformation of the uniform
adjacent-sum model: the upper bounds are allowed to alternate with period two.
For integers $d\ge2$ and $s\ge1$, define
\begin{equation}\label{eq-poly-def}
\cP_d^{(s)}
=\Bigl\{
x=(x_1,\dots,x_d)\in\R_{\ge0}^d:
 x_i+x_{i+1}\le s+\delta_i,\quad 1\le i\le d-1
\Bigr\},
\end{equation}
where $\delta_i=0$ for $i$ odd and $\delta_i=1$ for $i$ even.  Thus the bounds
are $s,s+1,s,s+1,\ldots$.  Since the constraint matrix is the edge--vertex
incidence matrix of a path graph, it is totally unimodular
\cite[Theorem~19.3]{Schrijver}; hence $\cP_d^{(s)}$ is a lattice polytope.

To see the alternation concretely, take $s=1$ and $d=4$: the constraints are
$x_1+x_2\le1$, $x_2+x_3\le2$, $x_3+x_4\le1$, with $x_i\ge0$.  Since
$x_2\le x_1+x_2\le1$ and $x_3\le x_3+x_4\le1$, the middle constraint
$x_2+x_3\le2$ is automatically satisfied; the lattice points of $\cP_4^{(1)}$
are the nine pairs
$((x_1,x_2),(x_3,x_4))\in\{(i,j):i+j\le1\}^2$, giving
$\#(\cP_4^{(1)}\cap\Z^4)=9$.
For comparison, the uniform model with \emph{all} bounds equal to $1$ has only
$N_4^{\mathrm{unif}}(1)=8$ lattice points in dimension $4$ (the relaxation of
even-indexed constraints from $1$ to $2$ admits one additional point).
The gap widens sharply in higher dimensions: the alternating model has
$N_5(1)=24$ lattice points in dimension $5$, versus $N_5^{\mathrm{unif}}(1)=13$
for the uniform model, foreshadowing the genuinely different generating
functions governing the two families.  This example illustrates the main
phenomenon of the paper: the period-two perturbation does not merely change
the numerical values, but separates the dimension counts by parity.

Our first observation is that this small perturbation already changes the
enumeration.  Write $N_d(s):=\#(\cP_d^{(s)}\cap\Z^d)$ for the lattice-point
count; the values $N_4(1)=9$ and $N_5(1)=24$ used above are special cases.
Unlike the uniform model, the odd- and even-dimensional counts are governed by
different series.  We use the auxiliary normalization
$\widetilde N_1(s):=s+2$ and define
\begin{equation}\label{eq-PhiPsi-def}
\Phi_s(y):=\widetilde N_1(s)+\sum_{m\ge1}N_{2m+1}(s)\,y^m,
\qquad
\Psi_s(y):=\sum_{m\ge0}N_{2m+2}(s)\,y^m.
\end{equation}
We also use the polynomial families
\begin{equation}\label{eq-PQ-def}
P_n(y):=\sum_{k=0}^{\lfloor(n-1)/2\rfloor}\binom{n}{2k+1}y^k,
\qquad
Q_n(y):=\sum_{k=0}^{\lfloor n/2\rfloor}\binom{n}{2k}y^k.
\end{equation}
They are the odd and even parts of $(1+\sqrt y)^n$, and after the substitution
$y=-\tan^2\theta$ they become rescaled Chebyshev polynomials.

\medskip
\noindent\textbf{Dimension generating functions.}
The first group of results gives closed forms for the dimension-varying
lattice-point counts.

\begin{theorem}[Rational dimension generating functions]\label{thm-rational}
For every integer $s\ge1$,
\begin{equation}\label{eq-Phi-main}
\Phi_s(y)=\frac{P_{s+2}(-y)}{Q_{s+2}(-y)},
\qquad
\Psi_s(y)=\frac{1-Q_{s+2}(-y)}{y\,Q_{s+2}(-y)}.
\end{equation}
The full dimension generating function $F_s(x):=\sum_{d\ge2}N_d(s)\,x^d$ satisfies
\begin{equation}\label{eq-F-main}
F_s(x)=x\bigl(\Phi_s(x^2)-(s+2)\bigr)+x^2\Psi_s(x^2).
\end{equation}
\end{theorem}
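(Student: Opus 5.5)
The plan is to encode the lattice-point counts with a transfer matrix of size $n:=s+2$ and read both series off its resolvent. Each coordinate takes a value in $\{0,1,\dots,s+1\}$; this range suffices because every $x_i$ with $i<d$ lies in an odd-indexed constraint and so satisfies $x_i\le s$, while the last coordinate satisfies at least $x_d\le s+1$. Define the $n\times n$ $0/1$ matrices
\[
A_{ij}=[\,i+j\le s\,],\qquad B_{ij}=[\,i+j\le s+1\,],\qquad 0\le i,j\le s+1 .
\]
Since the bounds are $s,s+1,s,s+1,\dots$, we get $N_{2m+1}(s)=\mathbf 1^{T}(AB)^m\mathbf 1$ for $m\ge1$ and $N_{2m+2}(s)=\mathbf 1^{T}(AB)^mA\mathbf 1$ for $m\ge0$. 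The $m=0$ odd term is $\mathbf 1^T\mathbf 1=n=s+2$, which is exactly the normalization $\widetilde N_1(s)$. Writing $M:=I-yAB$, this gives
\[
\Phi_s(y)=\mathbf 1^{T}M^{-1}\mathbf 1,\qquad \Psi_s(y)=\mathbf 1^{T}M^{-1}A\mathbf 1 .
\]
Formula \eqref{eq-F-main} then follows by splitting $F_s$ by parity of $d$: odd $d=2m+1\ge3$ contributes $x\,(\Phi_s(x^2)-(s+2))$, and even $d=2m+2$ contributes $x^2\Psi_s(x^2)$. That part is bookkeeping.

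For the main identities I would use the structure of $B$. Let $J$ be the reversal (anti-identity) matrix and $U$ the upper-triangular all-ones matrix. Then $B=JU$, and $A=B-J$, since $A$ is $B$ with its last antidiagonal deleted. Hence $AB=B^2-JB=B^2-U$, and $U^{-1}=I-S$ with $S$ the shift. The relation $B^{-1}=U^{-1}J$, a bidiagonal difference operator composed with reversal, is the tool for computing $\det M$ and the bordered determinant $\mathbf 1^T\operatorname{adj}(M)\mathbf 1=\det M-\det(M-\mathbf 1\mathbf 1^T)$. The target is
\[
\det M=Q_n(-y),\qquad \mathbf 1^T\operatorname{adj}(M)\mathbf 1=P_n(-y).
\]
Cramer's rule then gives $\Phi_s=P_{s+2}(-y)/Q_{s+2}(-y)$.

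To obtain these determinants I would substitute $y=-\tan^2\theta$, so that $Q_n(-y)=\cos(n\theta)/\cos^n\theta$ and $P_n(-y)=\sin(n\theta)/(\sin\theta\cos^{n-1}\theta)$. Two routes are available. The first is to diagonalize $B$ explicitly with trigonometric eigenvectors: the eigenvalues of $JU$ are of the form $\pm 1/(2\cos\alpha_k)$ for suitable angles $\alpha_k$. The second is to prove a three-term recurrence in $n$ for $\det M$ by expanding along the first row and column, using the banded form of $B^{-1}$. Either route should identify $\det M$ with a Chebyshev-type polynomial matching $Q_n(-y)$; the recurrence can be checked against $Q_{n+1}=Q_n+yP_n$ and $P_{n+1}=P_n+Q_n$ evaluated at $-y$. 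The bordered determinant should be handled by the same recurrence, yielding $P_n(-y)$.

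For $\Psi_s$ I would avoid a second determinant computation and use the identity $AB=B^2-U$ to relate $M^{-1}A\mathbf 1$ to $M^{-1}\mathbf 1$. Equivalently, the claimed form $1-yQ\Psi=Q$ amounts to $y\,\mathbf 1^TM^{-1}A\mathbf 1=1/\det M-1$. Combinatorially, this says that summing over the first coordinate links the even-dimensional counts to a sequence whose generating function is $1/Q_n(-y)$, and I expect it to follow from writing $A\mathbf 1$ in terms of $B\mathbf 1=U^T\!\cdots$. Failing that, I would prove the $\Psi$ formula by the same determinant/recurrence method applied to the bordered matrix with border vectors $\mathbf 1$ and $A\mathbf 1$.

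The main obstacle is the exact determinant evaluation $\det(I-yAB)=Q_{s+2}(-y)$, together with its bordered analogue, uniformly in $s$. My first attempt would be the recurrence in $n$ obtained from $B^{-1}$ being bidiagonal after reversal. The trigonometric diagonalization is the fallback.
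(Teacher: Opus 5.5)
Your set-up is correct and coincides with the paper's: your $A,B$ are $B_s,\overline B_s$, and you have the coordinate range $\{0,\dots,s+1\}$, $N_{2m+1}(s)=\mathbf 1^{\top}(AB)^m\mathbf 1$, $N_{2m+2}(s)=\mathbf 1^{\top}(AB)^mA\mathbf 1$, the resolvent forms with constant term $s+2$, and the parity bookkeeping for \eqref{eq-F-main}. The genuine gap is that the actual content of \eqref{eq-Phi-main} is only announced, never derived. This means $\det(I-yAB)=Q_{s+2}(-y)$, $\mathbf 1^{\top}\operatorname{adj}(M)\mathbf 1=P_{s+2}(-y)$, and $y\,\mathbf 1^{\top}M^{-1}A\mathbf 1=1/Q_{s+2}(-y)-1$.

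Several of the tools you list are also wrong as stated:
\begin{itemize}
\item Since $(JU)_{ij}=[\,i+j\ge s+1\,]$, the correct factorization is $B=UJ$, not $B=JU$. Consequently $JB$ is the lower-triangular all-ones matrix, not $U$.
\item The substitution must be $y=\tan^2\theta$, not $y=-\tan^2\theta$.
\item The eigenvectors of $B$ do not by themselves give the spectrum of $AB=B^2-JB$, because $J$ and $B$ do not commute.
\item A ``recurrence in $n$'' needs a nesting of the matrices $M$ for consecutive $s$, which you never set up.
\end{itemize}
The paper supplies exactly these pieces. Lemma~\ref{lem-det} uses that the trailing principal $(s+1)\times(s+1)$ block of $C_s$ is $C_{s-1}$; after one row and one column operation this gives $\det M_s=2\det M_{s-1}-(1+y)\det M_{s-2}$. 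Lemma~\ref{lem-col-adj-Q} shows that every adjugate column sum equals $Q_{i-1}(-y)$, and both numerators then follow by telescoping (Lemmas~\ref{lem-num-Phi} and~\ref{lem-num-Psi}).

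That said, your instinct about $B^{-1}$ can be completed, arguably more cleanly than in the paper, once you use the right relation between $A$ and $B$. The useful relation is not $A=B-J$ but $A_{ij}=B_{i,j+1}$, i.e.\ $A=BS^{\top}$ with your shift $S$ ($S_{k,k+1}=1$).
\begin{itemize}
\item Then $I-yAB=B\,(B^{-2}-yS^{\top})\,B$ with $(\det B)^2=1$.
\item From $B^{-1}=J(I-S)$ you get $B^{-2}=(I-S^{\top})(I-S)$, which is tridiagonal with diagonal $(1,2,\dots,2)$ and off-diagonal entries $-1$.
\item Hence $T:=B^{-2}-yS^{\top}$ is tridiagonal with superdiagonal $-1$ and subdiagonal $-(1+y)$. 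Its leading minors obey $\theta_k=2\theta_{k-1}-(1+y)\theta_{k-2}$ with $\theta_0=\theta_1=1$, so $\det M=\det T=Q_{s+2}(-y)$ by Lemma~\ref{lem-PQ-rec2}.
\item Because $Be_0=\mathbf 1$ and $Be_{s+1}=e_0$ (0-based), you get $\Phi_s=(T^{-1})_{00}$. Its cofactor is the continuant with initial values $1,2$, namely $P_{s+2}(-y)$.
\item Writing $yS^{\top}=B^{-2}-T$ gives $y\Psi_s=(T^{-1})_{0,s+1}-1$. The corner cofactor of the tridiagonal $T$ is $(-1)^{s+1}\cdot(-1)^{s+1}=1$, so $y\Psi_s=1/Q_{s+2}(-y)-1$.
\end{itemize}
Thus one conjugation produces all three quantities as single tridiagonal cofactors, whereas the paper computes every adjugate column sum. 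Even within the paper's framework there is a shortcut for $\Psi_s$: applying $yM^{-1}AB=M^{-1}-I$ to $e_0$ reduces the $\Psi_s$ formula to the single column sum $\mathbf 1^{\top}\operatorname{adj}(M)e_0=Q_0(-y)=1$.
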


The same formulas imply the following recurrences.

\begin{theorem}[M\"{o}bius and coupled recurrences]\label{thm-recurrence}
For every integer $s\ge1$,
\begin{equation}\label{eq-Phi-rec-main}
\Phi_s(y)=\frac{1+\Phi_{s-1}(y)}{1-y\,\Phi_{s-1}(y)},
\qquad
\Phi_0(y)=\frac{2}{1-y},\quad\Phi_1(y)=\frac{3-y}{1-3y},
\end{equation}
\begin{equation}\label{eq-Psi-rec-main}
1+y\Psi_s(y)=\frac{1+y\Psi_{s-1}(y)}{1-y\Phi_{s-1}(y)},
\qquad\Psi_0(y)=\frac{1}{1-y}.
\end{equation}
In particular, $\Phi_s(y)$ admits an iterative continued-fraction expansion.
\end{theorem}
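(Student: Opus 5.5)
The plan is to derive both recurrences directly from the closed forms in Theorem~\ref{thm-rational}. All the work goes into a two-term recurrence for the polynomial families $P_n,Q_n$. Since $P_n(y)$ and $Q_n(y)$ are the odd and even parts of $(1+\sqrt y)^n$, we have $(1+\sqrt y)^n=Q_n(y)+\sqrt y\,P_n(y)$. Multiplying by $(1+\sqrt y)$ and separating the even and odd parts in $\sqrt y$ gives
\[
P_{n+1}(y)=P_n(y)+Q_n(y),\qquad Q_{n+1}(y)=Q_n(y)+y\,P_n(y).
\]
Substituting $y\mapsto -y$ yields
\[
P_{n+1}(-y)=P_n(-y)+Q_n(-y),\qquad Q_{n+1}(-y)=Q_n(-y)-y\,P_n(-y).
\]
These identities are also easy to verify coefficientwise by Pascal's rule.

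Next, I extend the definitions to $s=0$ by the same formulas, $\Phi_0:=P_2(-y)/Q_2(-y)$ and $\Psi_0:=(1-Q_2(-y))/(yQ_2(-y))$. No polytope is attached at $s=0$. Since $P_2=2$ and $Q_2(-y)=1-y$, this gives $\Phi_0=2/(1-y)$ and $\Psi_0=1/(1-y)$. Similarly $P_3(-y)=3-y$ and $Q_3(-y)=1-3y$, so $\Phi_1=(3-y)/(1-3y)$. Every $Q_n(-y)$ has constant term $1$, so all the quotients are well-defined formal power series and the divisions below are legitimate.

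For the Möbius recurrence, take $n=s+1$ and divide the numerator and denominator of $\Phi_s=P_{s+2}(-y)/Q_{s+2}(-y)$ by $Q_{s+1}(-y)$:
\[
\Phi_s=\frac{P_{s+1}(-y)+Q_{s+1}(-y)}{Q_{s+1}(-y)-yP_{s+1}(-y)}=\frac{1+\Phi_{s-1}}{1-y\Phi_{s-1}},
\]
which is valid for all $s\ge1$ with the convention above.

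For $\Psi_s$, note that the formula in Theorem~\ref{thm-rational} is equivalent to $1+y\Psi_s=1/Q_{s+2}(-y)$. The second polynomial identity gives $Q_{s+2}(-y)=Q_{s+1}(-y)\bigl(1-y\Phi_{s-1}\bigr)$, hence
\[
1+y\Psi_s=\frac{1/Q_{s+1}(-y)}{1-y\Phi_{s-1}}=\frac{1+y\Psi_{s-1}}{1-y\Phi_{s-1}},
\]
which is the coupled recurrence.

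Iterating the Möbius map $\Phi\mapsto(1+\Phi)/(1-y\Phi)$ starting from $\Phi_0$ gives the continued-fraction expansion of $\Phi_s$.

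The only point needing care is consistency at the base case. The recurrences hold for $s=1$ only with the extended values $\Phi_0$ and $\Psi_0$, which must be defined by the closed forms rather than by lattice-point counts, because no polytope exists at $s=0$. Beyond that, the argument is routine algebra.
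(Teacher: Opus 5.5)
Your proposal is correct and is essentially the paper's proof. Both derive the recurrences from the closed forms of Theorem~\ref{thm-rational} via the first-order system $P_{n+1}(-y)=P_n(-y)+Q_n(-y)$, $Q_{n+1}(-y)=Q_n(-y)-yP_n(-y)$ of Lemma~\ref{lem-PQ-rec1}, dividing through by $Q_{s+1}(-y)$ and using $1+y\Psi_s(y)=1/Q_{s+2}(-y)$. The differences are cosmetic: you obtain the first-order system by multiplying $(1+\sqrt{y})^n=Q_n(y)+\sqrt{y}\,P_n(y)$ by $1+\sqrt{y}$ rather than from Pascal's rule, and you absorb $s=1$ into the general step by defining $\Phi_0,\Psi_0$ through the closed forms, whereas the paper checks $s=1$ separately.
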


\begin{theorem}[Trigonometric closed forms]\label{thm-tangent}
For every integer $s\ge1$,
\begin{equation}\label{eq-Phi-tan-main}
\Phi_s(y)=\frac{\tan\!\bigl((s+2)\arctan\sqrt{y}\bigr)}{\sqrt{y}},
\end{equation}
\begin{equation}\label{eq-Psi-trig-main}
\Psi_s(y)=\frac{1}{y}\!\left(
\frac{\cos^{s+2}\!\bigl(\arctan\sqrt{y}\bigr)}%
     {\cos\!\bigl((s+2)\arctan\sqrt{y}\bigr)}-1\right).
\end{equation}
Moreover, set
\[
\alpha_s:=\frac{\pi}{2(s+2)},\qquad
 y^*:=\tan^2\alpha_s,
\]
and define
\begin{equation}\label{eq-kappa-lambda-def-intro}
\kappa_s(y^*) := \frac{2\sec^2\!\alpha_s}{(s+2)y^*},
\qquad
\lambda_s(y^*) := \frac{2\cos^{s+3}\!\alpha_s}{(s+2)\sin^3\!\alpha_s}.
\end{equation}
Then the two parity subsequences satisfy
\begin{equation}\label{eq-open-asymptotics-intro}
N_{2m+1}(s)\sim\kappa_s(y^*)\,(y^*)^{-m},
\qquad
N_{2m+2}(s)\sim\lambda_s(y^*)\,(y^*)^{-m}
\end{equation}
as $m\to\infty$.
\end{theorem}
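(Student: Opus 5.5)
We derive Theorem~\ref{thm-tangent} from the rational forms in Theorem~\ref{thm-rational}. The key substitution is $y=\tan^2\theta$ with $\theta=\arctan\sqrt y$, valid for $0<y<1$ and then extended by analytic continuation. Since $P_n$ and $Q_n$ are the odd and even parts of $(1+\sqrt y)^n$, replacing $y$ by $-y$ gives
\[
Q_n(-y)+i\sqrt y\,P_n(-y)=(1+i\sqrt y)^n=(1+i\tan\theta)^n=\frac{e^{in\theta}}{\cos^n\theta}.
\]
Taking real and imaginary parts yields $Q_n(-y)=\cos(n\theta)/\cos^n\theta$ and $\sqrt y\,P_n(-y)=\sin(n\theta)/\cos^n\theta$. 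With $n=s+2$, the formula $\Phi_s=P_{s+2}(-y)/Q_{s+2}(-y)$ becomes $\tan((s+2)\theta)/\sqrt y$, which is \eqref{eq-Phi-tan-main}. Similarly, $\Psi_s=(1-Q_{s+2}(-y))/(yQ_{s+2}(-y))=\frac1y\bigl(1/Q_{s+2}(-y)-1\bigr)$, which is \eqref{eq-Psi-trig-main}. Both sides are analytic near $y=0$ and agree on an interval, so the identities hold as power series and as meromorphic functions.

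For the asymptotics we use singularity analysis of rational functions. The poles of $\Phi_s$ and $\Psi_s$ lie among the zeros of $Q_{s+2}(-y)=\cos((s+2)\theta)/\cos^{s+2}\theta$. As $\theta$ ranges over $(0,\pi/2)$, these zeros are $\theta_k=(2k+1)\pi/(2(s+2))$, giving the values $y_k=\tan^2\theta_k$ for $0\le k<\lfloor (s+2)/2\rfloor$, with $\theta_k<\pi/2$. This accounts for all $\lfloor (s+2)/2\rfloor=\deg Q_{s+2}$ roots, so every root is real, positive, and simple. The smallest is $y^*=\tan^2\alpha_s$, and it is strictly smaller than the others. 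At $y^*$ the numerators do not vanish: $\sin((s+2)\alpha_s)=1$, and $\cos^{s+2}\alpha_s\neq0$. Hence $y^*$ is a simple, strictly dominant pole of both series. The standard transfer for a simple pole then gives $[y^m]G\sim -\mathrm{Res}_{y=y^*}G\cdot (y^*)^{-m-1}$, equivalently $c\,(y^*)^{-m}$ with $c=-\mathrm{Res}/y^*$. The coefficient $N_{2m+1}$ is $[y^m]\Phi_s$ for $m\ge1$, and $N_{2m+2}$ is $[y^m]\Psi_s$.

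It remains to compute the residues, using $d\theta/dy=\cos^2\theta/(2\tan\theta)$.

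\textbf{Residue of $\Phi_s$.} We have $\tfrac{d}{dy}\cos((s+2)\theta)=-(s+2)\sin((s+2)\theta)\cos^2\theta/(2\tan\theta)$. Evaluating at $\alpha_s$, the residue of $\sin((s+2)\theta)/(\sqrt y\cos((s+2)\theta))$ is
\[
\frac{1}{\tan\alpha_s}\cdot\frac{-2\tan\alpha_s}{(s+2)\cos^2\alpha_s}=-\frac{2\sec^2\alpha_s}{s+2}.
\]
Dividing $-\mathrm{Res}$ by $y^*$ gives $\kappa_s$.

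\textbf{Residue of $\Psi_s$.} The residue of $\cos^{s+2}\theta/(y\cos((s+2)\theta))$ is
\[
\frac{\cos^{s+2}\alpha_s}{\tan^2\alpha_s}\cdot\frac{-2\tan\alpha_s}{(s+2)\cos^2\alpha_s}.
\]
Dividing $-\mathrm{Res}$ by $y^*$ gives
\[
\frac{2\cos^{s}\alpha_s}{(s+2)\tan^3\alpha_s}=\frac{2\cos^{s+3}\alpha_s}{(s+2)\sin^3\alpha_s}=\lambda_s.
\]

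The main obstacle is not conceptual but lies in two checks. First, we must confirm that the $\lfloor(s+2)/2\rfloor$ trigonometric roots exhaust the roots of $Q_{s+2}(-y)$, so that no complex pole of modulus at most $y^*$ has been missed; the degree count above settles this. Second, the chain-rule bookkeeping in the residues must be carried out carefully, since a stray factor of $y^*$ would spoil the match with the constants in \eqref{eq-kappa-lambda-def-intro}.
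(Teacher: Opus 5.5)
Your proof is correct and follows essentially the same route as the paper: the rational forms of Theorem~\ref{thm-rational}, the substitution $y=\tan^2\theta$ via $(1+i\tan\theta)^n=e^{in\theta}/\cos^n\theta$ (the paper's identity \eqref{eq-QP-trig}), and a simple-pole residue computation at $y^*$, which the paper carries out through $\frac{d}{dy}Q_{s+2}(-y)\big|_{y=y^*}$ instead of your chain rule on the trigonometric forms. Your degree count showing that the values $\tan^2\!\bigl((2k+1)\pi/(2(s+2))\bigr)$ exhaust the zeros of $Q_{s+2}(-y)$ is a useful addition: it justifies that $y^*$ is a simple and strictly dominant pole, which the paper only asserts.
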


\medskip
\noindent\textbf{Cyclic variants.}
Closing the path boundary by adding the constraint $x_d+x_1\le s+\delta_d$
(where $\delta_d=0$ for $d$ odd and $\delta_d=1$ for $d$ even) turns each
lattice-point count into the trace of a power of the double-step transfer
matrix $C_s$.  Define
\[
N_d^{\mathrm{cyc}}(s):=
\#\Bigl(\bigl\{x\in\cP_d^{(s)}:x_d+x_1\le s+\delta_d\bigr\}\cap\Z^d\Bigr).
\]
We use the two cyclic generating functions
\[
\Psi_s^{\mathrm{cyc}}(y):=\sum_{m\ge1}N_{2m}^{\mathrm{cyc}}(s)y^{m-1},
\qquad
\Omega_s^{\mathrm{cyc}}(y):=\sum_{m\ge1}N_{2m+1}^{\mathrm{cyc}}(s)y^{m-1}.
\]

\begin{theorem}[Cyclic generating functions]\label{thm-cyclic-intro}
For every integer $s\ge1$,
\begin{equation}\label{eq-Psicyc-intro}
\Psi_s^{\mathrm{cyc}}(y)=\frac{(s+2)P_{s+1}(-y)}{2Q_{s+2}(-y)}.
\end{equation}
The odd-dimensional cyclic series is less symmetric, but it is still rational with the same denominator.  More precisely,
\begin{equation}\label{eq-Omegacyc-intro}
\Omega_s^{\mathrm{cyc}}(y)=
\frac{\mathcal{R}_s(y)-\bigl(\lfloor s/2\rfloor+1\bigr)Q_{s+2}(-y)}{yQ_{s+2}(-y)},
\end{equation}
where $\mathcal{R}_s(y)$ is an explicit anti-diagonal cofactor polynomial,
given in Lemma~\ref{lem-Rs-explicit}.  Finally, let
\[
\alpha_s:=\frac{\pi}{2(s+2)},\qquad y_s^*:=\tan^2\alpha_s,
\qquad
\xi_s:=\frac{2\mathcal{R}_s(y_s^*)\sin\alpha_s\cos^{s-1}\alpha_s}{(s+2)y_s^*}.
\]
Then, as $m\to\infty$,
\begin{equation}\label{eq-cyclic-asymptotics-intro}
N_{2m}^{\mathrm{cyc}}(s)\sim 1\cdot (y_s^*)^{-m},
\qquad
N_{2m+1}^{\mathrm{cyc}}(s)\sim \xi_s (y_s^*)^{-m}.
\end{equation}
In particular, the leading constant in the even cyclic parity class is exactly $1$.
\end{theorem}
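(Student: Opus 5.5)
The plan is to treat both cyclic counts as traces of products of transfer matrices, and then read off all three assertions of Theorem~\ref{thm-cyclic-intro} from the characteristic polynomial of the double-step matrix $C_s$.

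\textbf{Transfer-matrix setup.} Take the state space to be the possible values $\{0,1,\dots,s+1\}$ of a single coordinate. Define $0/1$ matrices $A$ and $B$ indexed by this state space by
\[
A_{ij}=[\,i+j\le s\,],\qquad B_{ij}=[\,i+j\le s+1\,],
\]
and set $C_s:=AB$, the double-step matrix from the open-path analysis.

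\textbf{Even dimensions.} For $d=2m$ the bounds along the closed cycle are $s,s+1,\dots,s,s+1$, and the closing constraint $x_{2m}+x_1\le s+1$ is of $B$-type. Hence
\[
N_{2m}^{\mathrm{cyc}}(s)=\operatorname{tr}(C_s^m).
\]
Summing the geometric series gives
\[
\Psi_s^{\mathrm{cyc}}(y)=\operatorname{tr}\bigl(C_s(I-yC_s)^{-1}\bigr)=-\frac{d}{dy}\log\det(I-yC_s).
\]
The key input is the identity $\det(I-yC_s)=Q_{s+2}(-y)$. I expect this to be the same denominator identity that underlies Theorem~\ref{thm-rational}; the common denominator there is $\det(I-yC_s)$ up to cancellation. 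I would re-derive it via the Chebyshev structure: $A$ and $B$ are anti-triangular "staircase" matrices, and after $y=-\tan^2\theta$ the determinant becomes $\cos((s+2)\theta)/\cos^{s+2}\theta$.

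Granting this identity, write $Q_n(u)=\tfrac12\bigl((1+\sqrt u)^n+(1-\sqrt u)^n\bigr)$. Differentiating gives
\[
Q_n'(u)=\tfrac n2\,P_{n-1}(u),
\]
so $-\frac{d}{dy}\log Q_{s+2}(-y)=\frac{(s+2)P_{s+1}(-y)}{2Q_{s+2}(-y)}$. This is \eqref{eq-Psicyc-intro}, the Jacobi-derivative form.

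\textbf{Odd dimensions.} For $d=2m+1$ the closing constraint is $x_{2m+1}+x_1\le s$, which is of $A$-type. Then
\[
N_{2m+1}^{\mathrm{cyc}}(s)=\sum_{a,b}(C_s^m)_{ab}A_{ba}=\operatorname{tr}(C_s^mA).
\]
Summing over $m\ge1$ gives
\[
\Omega_s^{\mathrm{cyc}}(y)=\frac{\operatorname{tr}\bigl((I-yC_s)^{-1}A\bigr)-\operatorname{tr}A}{y}.
\]
The diagonal of $A$ consists of the entries $[2i\le s]$, so $\operatorname{tr}A=\lfloor s/2\rfloor+1$. By Cramer's rule,
\[
\operatorname{tr}\bigl((I-yC_s)^{-1}A\bigr)=\frac{\operatorname{tr}\bigl(\operatorname{adj}(I-yC_s)\,A\bigr)}{Q_{s+2}(-y)}.
\]
Since $A$ is supported on and above the anti-diagonal, this trace pairs cofactors across the anti-diagonal. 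Taking $\mathcal R_s(y):=\operatorname{tr}(\operatorname{adj}(I-yC_s)A)$, which is the cofactor polynomial of Lemma~\ref{lem-Rs-explicit}, yields \eqref{eq-Omegacyc-intro}.

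\textbf{Asymptotics.} With $y=\tan^2\theta$ we have $Q_{s+2}(-y)=\cos((s+2)\theta)/\cos^{s+2}\theta$. Its zeros are $y=\tan^2\frac{(2k+1)\pi}{2(s+2)}$. The smallest is $y^*_s=\tan^2\alpha_s$, and it is simple.

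For the even class, a logarithmic derivative has residue exactly equal to the multiplicity, here $1$. Thus $\Psi^{\mathrm{cyc}}_s(y)\approx 1/(y^*_s-y)$, whose coefficient of $y^{m-1}$ is $(y_s^*)^{-m}$. This gives the constant $1$.

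For the odd class, apply singularity analysis at the same simple pole. The coefficient of $y^{m-1}$ is asymptotic to
\[
\frac{\mathcal R_s(y^*_s)}{y^*_s\cdot\bigl(-\tfrac{d}{dy}Q_{s+2}(-y)\bigr)\big|_{y^*_s}}\,(y^*_s)^{-m}.
\]
To evaluate the derivative, use $dy/d\theta=2\tan\theta\sec^2\theta$ together with $\frac{d}{d\theta}\cos((s+2)\theta)=-(s+2)\sin((s+2)\alpha_s)=-(s+2)$ at $\theta=\alpha_s$. This gives
\[
\frac{d}{dy}Q_{s+2}(-y)\Big|_{y^*_s}=\frac{(s+2)\cos^{s-1}\alpha_s}{2\sin\alpha_s}.
\]
That derivative is positive, so the expansion is governed by $-\tfrac{d}{dy}Q_{s+2}(-y)$ exactly as in the general formula $[y^n]\,g(y)/h(y)\sim -g(y^*)/(h'(y^*)\,y^*)\,(y^*)^{-n}$. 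Substituting this value in the displayed asymptotic formula should reproduce $\xi_s$ once $\mathcal R_s(y^*_s)$ is inserted, but I have not carried this through. The step most likely to disagree is the power of $\cos\alpha_s$: the value of $\frac{d}{dy}Q_{s+2}(-y)$ at $y^*_s$ just derived gives a factor $\cos^{1-s}\alpha_s$ in the constant, whereas $\xi_s$ has $\cos^{s-1}\alpha_s$. If the discrepancy persists after a careful recheck, the fix should come from a normalization of $\mathcal R_s$ in Lemma~\ref{lem-Rs-explicit}, for example a rescaling by a power of $\cos\theta$.

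\textbf{Main obstacle.} The main obstacle is the determinant identity $\det(I-yC_s)=Q_{s+2}(-y)$, without spurious extra factors. If the state space is chosen too large, extra factors appear, for instance the state $s+1$ behaves differently under $A$ and $B$. I would first try to verify the identity by row-reducing $I-yC_s$ using the staircase shape of $A$ and $B$ to obtain a three-term recurrence in $s$ matching $Q_{n+1}=Q_n+uP_n$ and $P_{n+1}=P_n+Q_n$. Failing that, I would extract it from the denominators already established in Theorem~\ref{thm-recurrence}. A secondary point is to check that $\mathcal R_s(y^*_s)\neq0$, so that the odd-class asymptotic is genuinely of order $(y_s^*)^{-m}$.
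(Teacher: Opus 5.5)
Your route is the paper's own. You use the trace formulas $N_{2m}^{\mathrm{cyc}}(s)=\operatorname{Tr}(C_s^m)$ and $N_{2m+1}^{\mathrm{cyc}}(s)=\operatorname{Tr}(C_s^mB_s)$ (your $A$ is the paper's $B_s$). For the even series you combine Jacobi's formula with $\det(I-yC_s)=Q_{s+2}(-y)$ and the derivative identity. For the odd series you use $C_s(I-yC_s)^{-1}=\bigl((I-yC_s)^{-1}-I\bigr)/y$, $\operatorname{Tr}B_s=\lfloor s/2\rfloor+1$ and $\mathcal R_s=\operatorname{Tr}\bigl((I-yC_s)^*B_s\bigr)$. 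Both rational formulas are handled correctly, as is the even-class constant $1$; your residue-equals-multiplicity remark is a clean way to see it.

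The determinant identity you call the main obstacle is Lemma~\ref{lem-det}. It is proved by exactly the row reduction to $\det M_s=2\det M_{s-1}-(1+y)\det M_{s-2}$ that you propose. Your fallback of reading it off from the denominators in Theorem~\ref{thm-recurrence} would not suffice: coprimality of $P_{s+2}(-y)$ and $Q_{s+2}(-y)$ only shows that $Q_{s+2}(-y)$ divides $\det(I-yC_s)$, while the logarithmic derivative needs the exact determinant.

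The genuine gap is the odd-class constant. You leave it unverified, and the value you compute, $\frac{d}{dy}Q_{s+2}(-y)|_{y_s^*}=\frac{(s+2)\cos^{s-1}\alpha_s}{2\sin\alpha_s}$, is wrong both in sign and in the power of $\cos\alpha_s$. The repair you suggest, rescaling $\mathcal R_s$ by a power of $\cos\theta$, is not available: $\mathcal R_s$ is pinned down by the exact identity for $\Omega_s^{\mathrm{cyc}}$ you just proved.

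At $\theta=\alpha_s$ the product-rule term containing $\cos((s+2)\alpha_s)=0$ drops out. Hence $\frac{d}{d\theta}\bigl(\cos((s+2)\theta)\cos^{-(s+2)}\theta\bigr)=-(s+2)\cos^{-(s+2)}\alpha_s$, and dividing by $dy/d\theta=2\sin\alpha_s\cos^{-3}\alpha_s$ gives
\[
\left.\frac{d}{dy}Q_{s+2}(-y)\right|_{y=y_s^*}=-\frac{s+2}{2\sin\alpha_s\cos^{s-1}\alpha_s}.
\]
The sign is forced a priori. $Q_{s+2}(-y)$ equals $1$ at $y=0$ and is positive on $[0,y_s^*)$, so it decreases through its first zero; a positive derivative would make your constant negative whenever $\mathcal R_s(y_s^*)>0$.

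Substituting the correct value into your own (correct) formula $-\mathcal R_s(y_s^*)\big/\bigl(y_s^*\,\tfrac{d}{dy}Q_{s+2}(-y)|_{y_s^*}\bigr)$ gives exactly $\xi_s$, with $\cos^{s-1}\alpha_s$ in the numerator and no renormalization. As a check, for $s=2$ one has $\mathcal R_2(y_2^*)=13y_2^*$ and $\xi_2=13\sqrt2/8$, which is the leading constant of $[y^{m-1}]\,13/(1-6y+y^2)$.

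Your remark that $\mathcal R_s(y_s^*)\neq0$ is needed is well taken; the paper does not address it. It follows from Perron--Frobenius. $C_s$ has zero last row and is positive on the states $0,\dots,s$, so its Perron left vector $v$ is positive and its right vector $w$ has $w_0>0$. The residue of $\operatorname{Tr}\bigl((I-yC_s)^{-1}B_s\bigr)$ at $y_s^*$ is therefore a positive multiple of $v^\top B_s w\ge v_0w_0>0$.
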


The denominator $Q_{s+2}(-y)$ is the same as in Theorems~\ref{thm-rational}
and~\ref{thm-tangent}.  Thus the cyclic closure preserves the same dominant
pole as the open-path family, but its numerators are governed by traces rather
than by endpoint sums.

The cyclic series also admit recurrence and trigonometric descriptions, parallel
to Theorems~\ref{thm-recurrence} and~\ref{thm-tangent}.

\begin{theorem}[Cyclic recurrences]\label{thm-cyclic-recurrence}
Set
\[
\widehat\Psi_0^{\mathrm{cyc}}(y):=\frac{1}{1-y},\qquad
\widehat\Psi_s^{\mathrm{cyc}}(y):=\frac{2}{s+2}\Psi_s^{\mathrm{cyc}}(y)\quad(s\ge1).
\]
Then the normalized even-dimensional cyclic series satisfies, for every $s\ge1$,
\begin{equation}\label{eq-Lcyc-rec-main}
\widehat\Psi_s^{\mathrm{cyc}}(y)=
\frac{1+(1+y)\widehat\Psi_{s-1}^{\mathrm{cyc}}(y)}
{1-y-y(1+y)\widehat\Psi_{s-1}^{\mathrm{cyc}}(y)}.
\end{equation}
Equivalently, the unnormalized series itself satisfies
\begin{equation}\label{eq-Psicyc-rec-main}
\Psi_s^{\mathrm{cyc}}(y)=
\frac{(s+2)\bigl((s+1)+2(1+y)\Psi_{s-1}^{\mathrm{cyc}}(y)\bigr)}
{2\bigl((s+1)(1-y)-2y(1+y)\Psi_{s-1}^{\mathrm{cyc}}(y)\bigr)},
\qquad
\Psi_0^{\mathrm{cyc}}(y)=\frac{1}{1-y}.
\end{equation}
The odd cyclic series has the extra numerator $\mathcal{R}_s(y)$ in
Theorem~\ref{thm-cyclic-intro}; rather than a scalar M\"obius recurrence in $s$,
it is governed by the same denominator recurrence.  Let
$\nu_s:=\lfloor(s+2)/2\rfloor$ and
\[
Q_{s+2}(-y)=\sum_{j=0}^{\nu_s}(-1)^j\binom{s+2}{2j}y^j.
\]
If a cyclic generating function has the form
\[
\sum_{m\ge1}a_m y^{m-1}=\frac{A(y)}{Q_{s+2}(-y)},
\]
where $A(y)$ is a polynomial, then
\begin{equation}\label{eq-cyclic-coeff-rec-main}
a_m=\sum_{j=1}^{\nu_s}(-1)^{j+1}\binom{s+2}{2j}a_{m-j}
\end{equation}
for all $m\ge \deg A+2$.  In particular, for the even cyclic sequence the
recurrence starts at $m\ge \nu_s+1$; for the odd cyclic sequence the same
recurrence holds after the finitely many initial terms determined by the numerator
of $\Omega_s^{\mathrm{cyc}}(y)$.
\end{theorem}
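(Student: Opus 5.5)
We take the closed form \eqref{eq-Psicyc-intro} of Theorem~\ref{thm-cyclic-intro} as given. Set $q_s:=Q_{s+2}(-y)$ and $p_s:=P_{s+2}(-y)$, so that the normalized series is
\[
\widehat\Psi_s^{\mathrm{cyc}}=\frac{P_{s+1}(-y)}{q_s}.
\]
This also holds for $s=0$, since $P_1=1$ and $Q_2(-y)=1-y$. The plan has three parts: obtain the two-step recurrence for the pair $(P_n,Q_n)$, substitute it into the M\"obius map \eqref{eq-Lcyc-rec-main}, and read off the coefficient recurrence from the denominator.

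\textbf{Step 1: the recurrence for $(P_n,Q_n)$.} Since $Q_n+\sqrt{y}\,P_n=(1+\sqrt{y})^n$, multiplying by $1+\sqrt y$ gives
\[
P_{n+1}=P_n+Q_n,\qquad Q_{n+1}=Q_n+yP_n.
\]
Replacing $y$ by $-y$ and writing $a_n=P_n(-y)$, $b_n=Q_n(-y)$, these become
\[
a_{n+1}=a_n+b_n,\qquad b_{n+1}=b_n-ya_n.
\]

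\textbf{Step 2: verify the M\"obius recurrence.} Substitute $\widehat\Psi_{s-1}^{\mathrm{cyc}}=a_s/b_{s+1}$ into the right-hand side of \eqref{eq-Lcyc-rec-main} and clear $b_{s+1}$. The claim reduces to the projective identity
\[
\bigl(b_{s+1}+(1+y)a_s\bigr):\bigl((1-y)b_{s+1}-y(1+y)a_s\bigr)\;=\;a_{s+1}:b_{s+2}.
\]
Expand everything in terms of $(a_s,b_s)$ using Step 1:
\[
a_{s+1}=a_s+b_s,\qquad b_{s+1}=b_s-ya_s,\qquad b_{s+2}=(1-y)b_s-2ya_s.
\]
Then
\[
b_{s+1}+(1+y)a_s=a_s+b_s=a_{s+1},
\]
and
\[
(1-y)(b_s-ya_s)-y(1+y)a_s=(1-y)b_s-2ya_s=b_{s+2}.
\]
So the two sides agree exactly, not merely up to a common factor. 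Equation \eqref{eq-Psicyc-rec-main} then follows by substituting $\widehat\Psi=\tfrac{2}{s+2}\Psi$ at both levels $s$ and $s-1$. One must check that the factors $s+1$ and $s+2$ match. The base case $s=1$ requires $\Psi_0^{\mathrm{cyc}}=\widehat\Psi_0^{\mathrm{cyc}}$, and indeed $\tfrac{2}{0+2}=1$, so this is consistent.

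\textbf{Step 3: the coefficient recurrence.} If $\sum a_my^{m-1}\cdot q_s=A(y)$, compare the coefficients of $y^{m-1}$ for $m-1>\deg A$. Because $q_s$ has constant term $1$, this gives
\[
\sum_{j=0}^{\nu_s}(-1)^j\binom{s+2}{2j}a_{m-j}=0,
\]
with the convention $a_k=0$ for $k\le0$. This is exactly \eqref{eq-cyclic-coeff-rec-main} for $m\ge\deg A+2$.

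For the even cyclic series, $A=\tfrac{s+2}{2}P_{s+1}(-y)$ has degree $\lfloor s/2\rfloor$. The recurrence therefore starts at $m\ge\lfloor s/2\rfloor+2=\nu_s+1$, since $\nu_s=\lfloor s/2\rfloor+1$.

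For the odd cyclic series, \eqref{eq-Omegacyc-intro} shows that the numerator $\bigl(\mathcal R_s-(\lfloor s/2\rfloor+1)q_s\bigr)/y$ is a polynomial. This is because $\Omega^{\mathrm{cyc}}_s$ is a power series, so the bracketed expression must vanish at $y=0$. Hence the general statement applies to it with some finite degree.

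\textbf{Expected obstacle.} The main obstacle is only bookkeeping. One has to check that the index shift used in the base case $s=1$ is consistent with Theorem~\ref{thm-cyclic-intro}. One also has to confirm the degree count for $P_{s+1}$ in both parities of $s$.
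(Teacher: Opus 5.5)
Your proof is correct and follows the paper's argument: you take $\widehat\Psi_s^{\mathrm{cyc}}=P_{s+1}(-y)/Q_{s+2}(-y)$ from Theorem~\ref{thm-cyclic-intro}, verify the M\"obius map with the first-order Pascal system, and obtain the coefficient recurrence by clearing the denominator, using $\deg P_{s+1}(-y)=\lfloor s/2\rfloor=\nu_s-1$. The only differences are cosmetic. You expand everything in $(P_s(-y),Q_s(-y))$, whereas the paper factors out $Q_{s+1}(-y)$. You also reuse the symbol $a_n$ for both $P_n(-y)$ and the coefficient sequence in Step~3.
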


\begin{theorem}[Cyclic trigonometric forms]\label{thm-cyclic-trig}
Let $\theta=\arctan\sqrt y$ and
$\tau_s:=\lfloor s/2\rfloor+1$.  Then
\begin{equation}\label{eq-Psicyc-trig-main}
\Psi_s^{\mathrm{cyc}}(y)=
\frac{s+2}{2}\,
\frac{\sin((s+1)\theta)\cos^2\theta}
     {\sin\theta\,\cos((s+2)\theta)}.
\end{equation}
Moreover,
\begin{equation}\label{eq-Omegacyc-trig-main}
\Omega_s^{\mathrm{cyc}}(\tan^2\theta)=
\frac{\cos^{s+2}\theta\,\mathcal{R}_s(\tan^2\theta)-\tau_s\cos((s+2)\theta)}
     {\tan^2\theta\,\cos((s+2)\theta)},
\end{equation}
where $\mathcal{R}_s$ is the explicit polynomial given in Lemma~\ref{lem-Rs-explicit}.
\end{theorem}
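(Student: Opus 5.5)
The plan is to obtain both identities from the rational forms in Theorem~\ref{thm-cyclic-intro} by the substitution $y=\tan^2\theta$, exactly as Theorem~\ref{thm-tangent} follows from Theorem~\ref{thm-rational}. The basic tool is the definition \eqref{eq-PQ-def}. With $y=\tan^2\theta$ we have $\sqrt{-y}=i\tan\theta$, so
\[
Q_n(-\tan^2\theta)+i\tan\theta\,P_n(-\tan^2\theta)=(1+i\tan\theta)^n=\frac{e^{in\theta}}{\cos^n\theta}.
\]
Taking real and imaginary parts gives the two identities we need:
\[
Q_n(-\tan^2\theta)=\frac{\cos(n\theta)}{\cos^n\theta},
\qquad
P_n(-\tan^2\theta)=\frac{\sin(n\theta)}{\tan\theta\,\cos^n\theta}=\frac{\sin(n\theta)}{\sin\theta\,\cos^{n-1}\theta}.
\]
These are presumably the same lemma already used for \eqref{eq-Phi-tan-main}. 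If it is stated earlier, we cite it; otherwise the two displays above prove it in one line.

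For \eqref{eq-Psicyc-trig-main}, insert these identities into \eqref{eq-Psicyc-intro} with $n=s+1$ in the numerator and $n=s+2$ in the denominator:
\[
\Psi_s^{\mathrm{cyc}}(\tan^2\theta)=\frac{s+2}{2}\cdot\frac{\sin((s+1)\theta)}{\sin\theta\cos^{s}\theta}\cdot\frac{\cos^{s+2}\theta}{\cos((s+2)\theta)}.
\]
The powers of $\cos\theta$ cancel down to $\cos^2\theta$, which is exactly the claimed expression.

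For \eqref{eq-Omegacyc-trig-main}, start from \eqref{eq-Omegacyc-intro}. Replace $y$ by $\tan^2\theta$ and $Q_{s+2}(-y)$ by $\cos((s+2)\theta)/\cos^{s+2}\theta$. Then multiply numerator and denominator by $\cos^{s+2}\theta$. The term $\tau_s Q_{s+2}$ becomes $\tau_s\cos((s+2)\theta)$, the term $\mathcal R_s$ acquires the factor $\cos^{s+2}\theta$, and the denominator becomes $\tan^2\theta\cos((s+2)\theta)$. This is the claimed formula. Nothing about the explicit shape of $\mathcal R_s$ from Lemma~\ref{lem-Rs-explicit} is needed, since it is carried along as a black box.

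The only point needing care is the domain of validity. The identities are between formal power series in $y$ on one side and analytic functions of $\theta$ on the other. Both sides are analytic for $0\le\theta<\alpha_s$, where $\cos((s+2)\theta)\neq0$ and the series converge, because $y_s^*=\tan^2\alpha_s$ is the smallest root of $Q_{s+2}(-y)$ by Theorem~\ref{thm-tangent}. So they agree as functions near $\theta=0$, and equivalently as power series in $y$. The removable singularity of $\sin((s+1)\theta)/\sin\theta$ at $\theta=0$ is harmless, since its limit is $s+1$, consistent with $P_{s+1}(0)=s+1$. I expect no real obstacle; the main risk is bookkeeping of the cosine powers.
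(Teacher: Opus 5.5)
Your proposal is correct and follows the paper's proof essentially verbatim. You substitute $y=\tan^2\theta$ into the rational forms of Theorem~\ref{thm-cyclic-intro} using the identities \eqref{eq-QP-trig}, which the paper already records in Remark~\ref{rem-chebyshev}. You then cancel the cosine powers for $\Psi_s^{\mathrm{cyc}}$ and clear the factor $\cos^{s+2}\theta$ for $\Omega_s^{\mathrm{cyc}}$. Your added remark on the domain of validity is a harmless extra that the paper omits.
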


Theorems~\ref{thm-rational}--\ref{thm-cyclic-trig} give the main
dimension-counting formulas for the open-path family and its cyclic companion.
Thus Section~\ref{sec-2} gives a complete transfer-matrix solution for both
open and cyclic boundary conditions.  We turn next from dimension counting to
the finer Ehrhart-theoretic structure of the individual polytopes $\cP_d^{(s)}$
for fixed $d$.

\medskip
\noindent\textbf{Ehrhart data from dimension generating functions.}
In the Ehrhart part, we use a dimension-generating approach.  Instead of
computing the Ehrhart series of each fixed-dimensional polytope separately, we
first compute fixed-dilation generating functions in the dimension variable and
then extract the $h^*$-coefficients.

For $d\ge2$, write
\[
L_d^{(s)}(m):=\#\bigl(m\cP_d^{(s)}\cap\Z^d\bigr),
\qquad
\Ehr_{\cP_d^{(s)}}(z)=\frac{h_d^{(s)}(z)}{(1-z)^{d+1}},
\]
and expand
\[
h_d^{(s)}(z)=\sum_{k=0}^{d}h_{d,k}^{(s)}z^k.
\]
Thus $h_{d,k}^{(s)}$ denotes the $k$-th coefficient of the Ehrhart
$h^*$-polynomial of $\cP_d^{(s)}$.  For the dimension-generating functions
below we also use the low-dimensional conventions
$L_0^{(s)}(m)=1$ and $L_1^{(s)}(m)=sm+1$, as specified again in
Section~\ref{subsec-ehrhart-setup}.  Now let
\[
G_m^{(s)}(y):=\sum_{d\ge0}L_d^{(s)}(m)y^d,
\qquad
H_k^{(s)}(y):=\sum_{d\ge0}h_{d,k}^{(s)}y^d.
\]
Here $\mathcal O_m^{(s)}$ and $\mathcal E_m^{(s)}$ will denote the odd- and
even-dimensional fixed-dilation series, respectively; $G_m^{(s)}$ combines the
two parity classes, and $H_k^{(s)}$ records the $k$-th $h^*$-coefficient across
all dimensions.
The following formula extracts the coefficient-generating functions of the
$h^*$-polynomials from the fixed-dilation series.

\begin{theorem}[General $h^*$-extraction]\label{thm-hstar-extraction}
For fixed $s\ge1$ and $k\ge0$, the generating function
$H_k^{(s)}(y)$ of the $k$-th $h^*$-coefficient satisfies
\begin{equation}\label{eq-Hk-extraction}
H_k^{(s)}(y)
=\sum_{i=0}^{k}\frac{(-1)^i}{i!}\,y^{i-1}
\frac{d^i}{dy^i}\!\Bigl(y\,G_{k-i}^{(s)}(y)\Bigr).
\end{equation}
\end{theorem}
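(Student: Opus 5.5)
The plan is to reduce the statement to the classical coefficientwise relation between an Ehrhart series and its $h^*$-polynomial, and then to sum that relation over $d$ using a single differential-operator identity. The only inputs are Ehrhart's theorem, recalled in Section~\ref{sec-intro}, and the conventions $L_0^{(s)}(m)=1$ and $L_1^{(s)}(m)=sm+1$.

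\emph{Step 1: fixed-$d$ identity.} Fix $d\ge0$. Multiplying $\Ehr_{\cP_d^{(s)}}(z)=\sum_{m\ge0}L_d^{(s)}(m)z^m$ by $(1-z)^{d+1}=\sum_i(-1)^i\binom{d+1}{i}z^i$ and comparing coefficients of $z^k$ gives
\[
h_{d,k}^{(s)}=\sum_{i=0}^{k}(-1)^i\binom{d+1}{i}L_d^{(s)}(k-i),
\]
valid for every $k\ge0$. This includes $k>d$, where both sides vanish because $\deg h_d^{(s)}\le d$. I will check that the conventions for $d=0,1$ fit this identity:
\begin{itemize}
\item For $d=0$, $L_0^{(s)}(m)=1$ is the Ehrhart polynomial of a point, so $h_0^{(s)}(z)=1$.
\item For $d=1$, $L_1^{(s)}(m)=sm+1$ is the Ehrhart polynomial of the segment $[0,s]$, so $h_1^{(s)}(z)=1+(s-1)z$. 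Indeed $h_{1,1}^{(s)}=L_1^{(s)}(1)-2L_1^{(s)}(0)=s-1$.
\end{itemize}
Thus the identity holds uniformly for all $d\ge0$. For $d\ge2$ it is simply Ehrhart's theorem applied to the lattice polytope $\cP_d^{(s)}$, whose dimension is $d$ since it contains a small cube near the origin.

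\emph{Step 2: summing over $d$.} Multiply the identity by $y^d$ and sum over $d\ge0$, exchanging the finite sum over $i$ with the sum over $d$. This gives
\[
H_k^{(s)}(y)=\sum_{i=0}^{k}(-1)^i\sum_{d\ge0}\binom{d+1}{i}L_d^{(s)}(k-i)\,y^d .
\]
It remains to recognise the inner sum. For each monomial we have $\frac{d^i}{dy^i}y^{d+1}=i!\binom{d+1}{i}y^{d+1-i}$, and hence
\[
\binom{d+1}{i}y^{d}=\frac{y^{i-1}}{i!}\frac{d^i}{dy^i}\,y^{d+1}.
\]
Applying this termwise to $yG_{k-i}^{(s)}(y)=\sum_{d\ge0}L_d^{(s)}(k-i)y^{d+1}$ yields
\[
\sum_{d\ge0}\binom{d+1}{i}L_d^{(s)}(k-i)\,y^d=\frac{y^{i-1}}{i!}\frac{d^i}{dy^i}\bigl(yG_{k-i}^{(s)}(y)\bigr),
\]
which is exactly \eqref{eq-Hk-extraction}.

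\emph{Step 3: justification and main obstacle.} Termwise differentiation is legitimate in the ring of formal power series, so no convergence argument is needed. The factor $y^{i-1}$ causes no negative powers: $\frac{d^i}{dy^i}(yG)$ has its lowest term of degree at least $1-i$ only when $i=0$, and for $i\ge1$ the factor $y^{i-1}$ is a nonnegative power. Hence the right-hand side of \eqref{eq-Hk-extraction} is a genuine power series.

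The only step needing care is Step 1 at the boundary, namely ensuring that the low-dimensional conventions agree with the $h^*$-coefficients implicitly used in the definition of $H_k^{(s)}$, which includes the $d=0,1$ terms. The explicit checks above settle this. Everything else is the routine differential-operator computation.
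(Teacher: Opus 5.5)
Your proposal is correct and follows essentially the same route as the paper. Like the paper, you use termwise differentiation of $y\,G_{k-i}^{(s)}(y)$ to identify the $i$-th summand with $\sum_{d\ge0}\binom{d+1}{i}L_d^{(s)}(k-i)\,y^d$, then apply the inversion formula for $d\ge2$ and check the conventions $L_0^{(s)}(m)=1$ and $L_1^{(s)}(m)=sm+1$ directly for $d=0,1$. (Your Step~3 is phrased confusingly: the only possible negative power comes from the $i=0$ term, where $y^{-1}$ is cancelled by the factor $y$ in $y\,G_k^{(s)}(y)$, but your conclusion is right.)
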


We next give the explicit fixed-dilation formulas.  For $0\le j\le s$, define
\begin{equation}\label{eq-ajbj-def}
a_j^{(s,m)}:=\binom{(s+1-j)m+j+1}{2j},\qquad
b_j^{(s,m)}:=\binom{(s+1-j)m+j+1}{2j+1}.
\end{equation}
Note that $a_0^{(s,m)}=1$ for all $s,m$, since $\binom{(s+1)m+1}{0}=1$.
The combinatorial significance of $a_j^{(s,m)}$ will become clear in
Lemma~\ref{lem-Gm-triangular}: it is the inner product
$\mathbf c^\top U_j$ of the compression vector and the $j$-th test vector.
Define the denominator polynomial
\begin{equation}\label{eq-Deltasm-def}
\Delta_{s,m}(z):=\sum_{j=0}^{s}(-1)^j a_j^{(s,m)}z^j.
\end{equation}

\begin{theorem}[Fixed-dilation rational formulas]\label{thm-Gm-general}
For every $s,m\ge1$, the parity-split generating functions are
\begin{align}
\mathcal{E}_m^{(s)}(z)
&=\frac{\displaystyle\sum_{j=0}^{s-1}(-1)^j a_{j+1}^{(s,m)}z^j}{\Delta_{s,m}(z)},
\label{eq-Em-general}\\
\mathcal{O}_m^{(s)}(z)
&=\frac{\displaystyle\sum_{j=0}^{s}(-1)^j b_j^{(s,m)}z^j}{\Delta_{s,m}(z)}.
\label{eq-Om-general}
\end{align}
Moreover,
\begin{equation}\label{eq-Gms-general}
G_m^{(s)}(y)=1+y\bigl(\mathcal{O}_m^{(s)}(y^2)-m\bigr)+y^2\mathcal{E}_m^{(s)}(y^2).
\end{equation}
\end{theorem}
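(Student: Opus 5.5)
The plan is to redo the transfer-matrix analysis of Section~\ref{sec-2} for the $m$-th dilate, and then identify the resolvent on a small invariant subspace. The dilate $m\cP_d^{(s)}$ is cut out by $x_i+x_{i+1}\le ms$ for $i$ odd and $x_i+x_{i+1}\le m(s+1)$ for $i$ even, with all $x_i\ge0$. Put $M:=m(s+1)$ and index states by $\{0,1,\dots,M\}$. Define the $0/1$ matrices $A_{ab}=[a+b\le ms]$ and $B_{ab}=[a+b\le M]$, and set $C:=AB$. With $\mathbf 1$ the all-ones vector, we have
\[
L_{2n+1}^{(s)}(m)=\mathbf 1^\top C^{\,n}\mathbf e,\qquad L_{2n+2}^{(s)}(m)=\mathbf 1^\top C^{\,n}A\mathbf 1,
\]
where $\mathbf e$ is the indicator of the admissible values of the last coordinate. (The constant-term shift $-m$ in \eqref{eq-Gms-general} accounts for the normalization $L_1^{(s)}(m)=sm+1$ versus the $n=0$ term of the matrix formula, exactly as $\widetilde N_1(s)=s+2$ did in Theorem~\ref{thm-rational}.) Consequently $\mathcal O_m^{(s)}(z)=\mathbf 1^\top(I-zC)^{-1}\mathbf e$ and $\mathcal E_m^{(s)}(z)=\mathbf 1^\top(I-zC)^{-1}A\mathbf 1$. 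Once these are established, \eqref{eq-Gms-general} is just the parity split of $\sum_d L_d^{(s)}(m)y^d$, the same bookkeeping as in \eqref{eq-F-main}.

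The core step is a compression. The states come in $s+1$ "blocks" of length $m$, and the operator $C$ maps vectors that are piecewise polynomial in the state (binomial-coefficient vectors in $a$) to vectors of the same type. Concretely, I would introduce test vectors $U_0,\dots,U_s$, with $(U_j)_a$ a binomial coefficient of degree $2j$ in $a$, truncated at the appropriate block boundary. I would then show:
\begin{itemize}[label=(\roman*)]
\item the span $\langle U_0,\dots,U_s\rangle$ contains $\mathbf e$ and $A\mathbf 1$;
\item $C$ acts on this span by a triangular-plus-companion matrix.
\end{itemize}
This is what the forthcoming Lemma~\ref{lem-Gm-triangular} encodes through the identity $a_j^{(s,m)}=\mathbf c^\top U_j$. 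Each application of $A$ or $B$ sums a polynomial over an interval, so the computation reduces to hockey-stick and Chu--Vandermonde identities. Those identities produce the binomials $\binom{(s+1-j)m+j+1}{2j}$ and $\binom{(s+1-j)m+j+1}{2j+1}$: the top entry $(s+1-j)m+j+1$ tracks how far the support shrinks after $j$ double steps.

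From the compressed $(s+1)\times(s+1)$ matrix, I would read off that its characteristic polynomial, in reciprocal form, is $\Delta_{s,m}(z)=\sum_{j=0}^s(-1)^ja_j^{(s,m)}z^j$. This gives the linear recurrence
\[
\sum_{j=0}^{s}(-1)^j a_j^{(s,m)}\,L_{2(n-j)+r}^{(s)}(m)=0,\qquad r\in\{1,2\},
\]
for all $n$ large enough. Multiplying each series by $\Delta_{s,m}(z)$ then yields a polynomial of degree at most $s$ (for $\mathcal O$) and at most $s-1$ (for $\mathcal E$). To identify the numerators $\sum_j(-1)^jb_j^{(s,m)}z^j$ and $\sum_j(-1)^ja_{j+1}^{(s,m)}z^j$, I would compare the first $s+1$ coefficients. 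Equivalently, I would use Cramer's rule on the compressed system, whose cofactors are again the same inner products: $b_j^{(s,m)}$ pairs $\mathbf e$ against $U_j$, and $a_{j+1}^{(s,m)}$ arises from the index shift produced by the extra factor $A$.

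I expect the main obstacle to be the compression step itself. The difficulty is choosing the test vectors $U_j$ so that they are genuinely closed under $A$ and $B$, with the truncations at $ms$ and $M$ interacting correctly, and then verifying the triangular action with exactly the coefficients $a_j^{(s,m)}$. I would first work out the cases $s=1,2$ by hand and check the result against Theorem~\ref{thm-rational} at $m=1$, where $\Delta_{s,1}(z)$ should match $Q_{s+2}(-z)$ up to normalization. I would then guess the general block-binomial shape of $U_j$ and prove closure by induction on $j$ using Vandermonde convolution.
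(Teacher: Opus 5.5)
Your transfer-matrix set-up is fine: $A$, $B$, $C=AB$ on the states $\{0,\dots,(s+1)m\}$, the resolvent expressions for $\mathcal O_m^{(s)}$ and $\mathcal E_m^{(s)}$, and the $-m$ correction in $G_m^{(s)}$. The gap is in the compression step, which carries the whole proof. You ask for an $(s+1)$-dimensional span containing $\mathbf e=\mathbf 1$ and $A\mathbf 1$ on which $C$ (indeed $A$ and $B$ separately) acts by a triangular-plus-companion matrix. Already for $s=m=1$ no such subspace exists:
\begin{itemize}
\item Here $C=\left(\begin{smallmatrix}2&2&1\\1&1&1\\0&0&0\end{smallmatrix}\right)$, and $\mathbf 1$, $C\mathbf 1=(5,3,0)^\top$, $C^2\mathbf 1=(16,8,0)^\top$ are linearly independent. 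So every $C$-invariant subspace containing $\mathbf 1$ is all of $\R^3$, not $2$-dimensional.
\item The formula $\mathcal O_1^{(1)}(z)=(3-z)/(1-3z)$ survives only because $C^3\mathbf 1=3C^2\mathbf 1$. The cyclic space of $\mathbf 1$ carries a nilpotent Jordan block that is invisible in $\det(I-zC)=1-3z$.
\item Closure under $B$ alone is also impossible: $B$ is invertible and $\mathbf 1$, $B\mathbf 1$, $B^2\mathbf 1$ are already independent.
\end{itemize}
So there is no $(s+1)\times(s+1)$ companion matrix from which to read off $\Delta_{s,m}$ or the numerator degrees. Your fallback of comparing the first $s+1$ coefficients is circular, since you have no independent formula for $L_{2r+1}^{(s)}(m)$ with $r\le s$.

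The missing idea is to remove the nilpotent part before choosing test vectors. Because $A$ vanishes outside its upper-left $(sm+1)\times(sm+1)$ block, you can factor $A=\mathsf S\mathsf T$ and hence $C=\mathsf S\mathsf R$. The push-through identity $(I-z\mathsf S\mathsf R)^{-1}=I+z\mathsf S(I-z\mathsf R\mathsf S)^{-1}\mathsf R$ then moves everything to the reversed product $\mathsf A=\mathsf R\mathsf S$ acting on $\{0,\dots,sm\}$. On that smaller space the all-ones vector is $U_0$, and $U_j(a)=\binom{(a-jm)_++j}{2j}$ satisfies the rank-one shift $\mathsf AU_j=a_{j+1}^{(s,m)}\mathbf 1-U_{j+1}$ with $U_s=0$. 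A backward induction on $T_j:=\mathbf c^\top(I-z\mathsf A)^{-1}U_j$ then gives numerator and denominator at once.

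For the odd series you also need a second family $V_j(a)=\binom{(a-jm)_++j+1}{2j+1}$, because $\mathsf R\mathbf 1=(b_0^{(s,m)}+1)U_0-V_0$. The relations $\mathsf AV_j=\widetilde b_{j+1}^{(s,m)}\mathbf 1-V_{j+1}$ form a nilpotent shift feeding into $U_0$. This is exactly why the relevant invariant subspace can have dimension up to $2s$ while the denominator stays $\Delta_{s,m}$. A single family of truncated binomials on the full state space, as in your plan, does not capture this.
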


When $m=1$, these formulas specialize to the ordinary lattice-point counts of
$\cP_d^{(s)}$: indeed
\[
G_1^{(s)}(x)=1+(s+1)x+F_s(x).
\]
Thus Theorem~\ref{thm-Gm-general} recovers the rational formula for
$F_s(x)$ after the specialization $a_j^{(s,1)}=\binom{s+2}{2j}$ and
$b_j^{(s,1)}=\binom{s+2}{2j+1}$.  This specialization, however, does not by
itself explain the M\"{o}bius recurrence in $s$ or the trigonometric forms in
Theorems~\ref{thm-recurrence} and~\ref{thm-tangent}; those require the
Pascal/Chebyshev structure exposed by the direct transfer-matrix analysis in
Section~\ref{sec-2}.

These formulas imply recurrences of order at most $s$ for the fixed-dilation counts,
rational generating functions for the volumes, and a bivariate identity for
$\sum_{d\ge0}h_d^{(s)}(z)y^d$; see Section~\ref{subsec-consequences}.  They
also give explicit formulas for $H_k^{(s)}(y)$ for $k\le4$ in
Proposition~\ref{prop-Hk-low}.

\medskip
\noindent\textbf{Gorenstein behavior.}
The alternating family has a clear exceptional case.  When $s=1$, the
even-dimensional polytopes split as products of two-dimensional unimodular
simplices,
$\cP_{2r}^{(1)}\cong \mathcal T_1^r$, where
$\mathcal T_1=\{(a,b)\in\R_{\ge0}^2:a+b\le1\}$.  We obtain explicit lattice-point counts
and volumes and prove that $\cP_{2r}^{(1)}$ is Gorenstein for every $r\ge1$.

\begin{theorem}[Gorenstein property for $s=1$]\label{thm-gorenstein-s1}
For every integer $r\ge1$, the polytope $\cP_{2r}^{(1)}$ is Gorenstein,
equivalently, its $h^*$-polynomial is palindromic.
\end{theorem}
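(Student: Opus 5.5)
The plan is to show first that $\cP_{2r}^{(1)}$ is lattice-equivalent to the product $\mathcal T_1^r$. Then I will use that Ehrhart polynomials multiply under products, so the product of Gorenstein polytopes stays Gorenstein. The index $r$ will be controlled by a codegree/interior-point argument.

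\emph{Step 1 (product decomposition).} For $d=2r$ and $s=1$, the constraints are $x_{2i-1}+x_{2i}\le 1$ for $1\le i\le r$ (odd-indexed bounds equal $s=1$) and $x_{2i}+x_{2i+1}\le 2$ for $1\le i\le r-1$ (even-indexed bounds equal $2$). Nonnegativity together with the odd constraints gives $x_{2i}\le 1$ and $x_{2i+1}\le 1$, so every even-indexed constraint is implied. This is exactly the computation in the $d=4$ example of the introduction. Hence $\cP_{2r}^{(1)}=\mathcal T_1^{\,r}$ with respect to the coordinate pairs $(x_{2i-1},x_{2i})$, and this is an equality of polytopes, not merely an isomorphism.

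\emph{Step 2 (Ehrhart data).} Since $m\mathcal T_1^r=(m\mathcal T_1)^r$, we get
\[
L_{2r}^{(1)}(m)=\binom{m+2}{2}^{r}.
\]
This gives the explicit count and the normalized volume $(2r)!/2^r$.

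\emph{Step 3 (palindromicity).} I will use the Stanley/Hibi criterion: $h^*$ is palindromic of degree $d+1-g$ exactly when
\[
L_{\cP}(-m-g)=(-1)^d L_{\cP}(m)
\]
as polynomials in $m$. Equivalently, by Ehrhart–Macdonald reciprocity, the interior of $g\cP$ is a lattice translate of $\cP$. Here
\[
L(-m-3)=\binom{-m-1}{2}^r=\binom{m+2}{2}^r=L(m),
\]
and $(-1)^{2r}=1$, so the identity holds with $g=3$. This shows $h^*_{\cP_{2r}^{(1)}}$ is palindromic. Concretely, its degree is $2r+1-3=2r-2$.

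One can check the geometric form directly as well. The interior lattice points of $3\mathcal T_1^r$ are the points with all coordinates $\ge1$ and each pair sum $\le 2$, so there is the single interior point $(1,\dots,1)$. More generally, the interior of $(m+3)\mathcal T_1^r$ is $(1,\dots,1)+m\mathcal T_1^r$.

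\emph{Main obstacle.} Nothing here is deep. The only point needing care is to invoke the criterion in a form that does not require the polynomial $h^*$ to have full degree $d$. Hibi's theorem in the form "$\cP$ is Gorenstein iff some $g\cP$ has a unique interior lattice point $v$ with $\mathrm{int}(g\cP)\cap\Z^d=v+((g\cP)\cap\Z^d)$-shift compatibility" handles this. Equivalently, one can use the reciprocity identity above, which is equivalent to $h^*(z)=z^{d+1-g}h^*(1/z)$.

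I would also note as a sanity check that for $r=1$, $h^*=1$, which is trivially palindromic. Since a product of Gorenstein polytopes is Gorenstein, the result could equally be cited from this fact applied to the unimodular simplex $\mathcal T_1$.
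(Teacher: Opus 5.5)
Your proposal is correct and is essentially the paper's proof: both identify $\cP_{2r}^{(1)}$ with $\mathcal T_1^r$ because the even-indexed constraints are redundant, and then verify the Gorenstein property with index $3$. The paper does this directly via $\operatorname{int}((n+3)\mathcal T_1^r)\cap\Z^{2r}=(1,\dots,1)+(n\mathcal T_1^r\cap\Z^{2r})$, while you mainly use the equivalent reciprocity identity $L(-m-3)=L(m)$ for $L(m)=\binom{m+2}{2}^r$.

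One small correction: your translate statement holds only for lattice points, not for the real sets, since the open set $\operatorname{int}((m+3)\mathcal T_1^r)$ strictly contains $(1,\dots,1)+m\mathcal T_1^r$. Your geometric check and your statement of Hibi's criterion should therefore be phrased inside $\Z^{2r}$.
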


For $s\ge2$ the situation is different: for every $s\ge2$ there exists an even
dimension $d$ such that $\cP_d^{(s)}$ is not Gorenstein.  In fact the failure
already occurs in dimension $2$ unless $s=3$, and for $s=3$ it occurs in
dimension $4$.  This contrasts with the related uniform graph-polytope setting,
where the corresponding Ehrhart numerators are palindromic \cite{Xin}.

\medskip
\noindent\textbf{Comparison with the uniform model.}
Table~\ref{tab-comparison} highlights the structural differences between the
two families.  The uniform-family entries are taken from the open-path and
circular enumerations of B\'{o}na--Ju~\cite{Bona2}, from the graph-polytope,
continued-fraction, Chebyshev and spectral formulas of Xin--Zhong~\cite{Xin},
and from the zig-zag chain-polytope unimodality result of Chen--Zhang~\cite{ChenZhang}.
The Ehrhart-theoretic comparison is stated only in forms directly supported by
the cited sources and by the results proved below.

\medskip
\noindent\textbf{Method and organization.}
Section~\ref{sec-2} builds the transfer-matrix framework: the double-step
matrix $C_s=B_s\bar{B}_s$ encodes two consecutive constraints at once, and
its resolvent generates $\Phi_s$, $\Psi_s$, and the cyclic trace series
$\Psi_s^{\mathrm{cyc}}$ and $\Omega_s^{\mathrm{cyc}}$ via adjugate formulas
and Jacobi's determinant identity.  The polynomial
families $P_n$ and $Q_n$---odd and even parts of $(1+\sqrt{y})^n$---appear as
the numerators and common denominator (Theorems~\ref{thm-rational}
and~\ref{thm-cyclic-intro}).  Section~\ref{sec-3} applies the same
compression idea to fixed dilations, proving Theorem~\ref{thm-Gm-general}
and the Ehrhart-theoretic consequences including Gorenstein behavior.
The concluding section collects six open problems.

\begin{table}[htbp]
\centering
\footnotesize
\caption{Structural comparison between the uniform adjacent-sum family ($c_i\equiv s$) and the alternating family ($c_i$ alternating between $s$ and $s+1$) studied here.  Citations in the uniform column indicate where the corresponding results are proved or discussed; results for the alternating family are proved in this paper unless otherwise noted.}
\label{tab-comparison}
\renewcommand{\arraystretch}{1.15}
\begin{tabularx}{\textwidth}{@{}p{2.65cm}XX@{}}
\toprule
\textbf{Aspect} & \textbf{Uniform family} & \textbf{Alternating family studied here} \\
\midrule
Model
& Constant adjacent-sum bound; equivalent to path graph-polytopes, weighted graphs, and magic labellings \cite{Bona2,Xin}.
& Bounds alternate between $s$ and $s+1$. \\[2pt]

Transfer matrix
& One-step transfer matrix; also the unit-primitive matrix model \cite{Bona2,Xin}.
& Two-step transfer matrix forced by the alternation. \\[2pt]

Dimension counts
& One rational dimension generating function, with continued-fraction and Chebyshev-type forms \cite{Bona2,Xin}.
& Odd and even dimensions split into two rational series, with M\"obius/coupled recurrences. \\[2pt]

Cyclic version
& Circular inequalities are treated in the uniform setting \cite{Bona2}.
& Both cyclic parity classes become trace formulas with the open-path denominator. \\[2pt]

Ehrhart viewpoint
& The path graph-polytope interpretation gives the same counting problem \cite{Xin}.
& Fixed-dilation dimension series recover Ehrhart data and $h^*$-coefficients. \\[2pt]

$h^*$-behavior
& In the related uniform graph-polytope setting, the Ehrhart numerator is palindromic \cite{Xin}; unimodality is known for a related zig-zag chain-polytope model \cite{ChenZhang}.
& The case $s=1$ gives all even dimensions; $(s,d)=(3,2)$ is an additional low-dimensional case; for every $s\ge2$ the property fails in some even dimension. \\
\bottomrule
\end{tabularx}
\end{table}

\section{Transfer matrices and dimension generating functions}\label{sec-2}

Throughout this section $s \ge 1$ is a fixed integer.

To orient the reader, the argument in this section proceeds in four main stages,
followed by the cyclic variant.  We first encode the alternating inequalities by a two-step transfer matrix and
rewrite the dimension counts as matrix-resolvent series (\S\ref{sec-2}.1). We
then identify the common denominator and the relevant adjugate numerators with
two polynomial families extracted from Pascal's triangle (\S\ref{sec-2}.2).
These identities yield the rational expressions for $\Phi_s$ and $\Psi_s$
(\S\ref{sec-2}.3), from which the M\"obius recurrence and the trigonometric
closed forms follow (\S\ref{subsec-main-proofs}); the cyclic variant is treated separately
in \S\ref{subsec-cyclic}.

\subsection{Transfer-matrix formulation}

For integers $0 \le i, j \le s+1$, let $a_{ij}(d,s)$ denote the number of
nonnegative integer vectors $(x_1,\dots,x_d)$ satisfying the system
\eqref{eq-poly-def} with boundary conditions $x_1 = i$ and $x_d = j$.
Write $A_{d,s} := (a_{ij}(d,s))_{0 \le i,j \le s+1}$.
Then
\begin{equation}\label{eq-Nd-sum}
N_d(s) = \sum_{i=0}^{s+1}\sum_{j=0}^{s+1} a_{ij}(d,s).
\end{equation}

Define the two \emph{transition matrices} $B_s$ and $\overline{B}_s$ of size
$(s+2)\times(s+2)$, indexed by $\{0,1,\dots,s+1\}$, by
\begin{equation}\label{eq-BbarB-def}
(B_s)_{ij} =
\begin{cases}
1, & i+j \le s,\\
0, & \text{otherwise},
\end{cases}
\qquad
(\overline{B}_s)_{ij} =
\begin{cases}
1, & i+j \le s+1,\\
0, & \text{otherwise}.
\end{cases}
\end{equation}
Here $B_s$ records transitions across an odd-indexed constraint
$x_r + x_{r+1} \le s$, and $\overline{B}_s$ records transitions across an
even-indexed constraint $x_r + x_{r+1} \le s+1$. The \emph{double-step
transfer matrix} is
\begin{equation}\label{eq-Cs-def}
C_s := B_s \overline{B}_s.
\end{equation}

\begin{proposition}\label{prop-transfer}
For every integer $m \ge 1$,
\begin{equation}\label{eq-A-even}
A_{2m,s} = C_s^{\,m-1} B_s,
\end{equation}
and
\begin{equation}\label{eq-A-odd}
A_{2m+1,s} = C_s^{\,m}.
\end{equation}
\end{proposition}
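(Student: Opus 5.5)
The plan is to prove the standard transfer-matrix identity for path-type constraint systems. First I express $A_{d,s}$ as a product of the one-step matrices $B_s$ and $\overline{B}_s$ in the order the constraints appear; then I group the factors in pairs to get powers of $C_s$.

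The first step is to note that every lattice point of $\cP_d^{(s)}$ has coordinates in $\{0,1,\dots,s+1\}$. Indeed $x_i\le x_i+x_{i\pm1}\le s+1$ for $d\ge2$, so indexing by $\{0,\dots,s+1\}$ loses nothing. One caveat is $d=1$ at the boundary, but the proposition only involves $d\ge2$.

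The second step is to show, by induction on $d\ge2$, that
\[
A_{d,s}=M_1M_2\cdots M_{d-1},\qquad
M_r=\begin{cases}B_s,& r\text{ odd},\\ \overline{B}_s,& r\text{ even}.\end{cases}
\]
For $d=2$ this is the definition: $a_{ij}(2,s)=1$ exactly when $i+j\le s$, which is $(B_s)_{ij}$. For the inductive step, fix the first $d$ coordinates and append $x_{d+1}=j$. The only new inequality is $x_d+x_{d+1}\le s+\delta_d$, and it involves only $x_d$ and $x_{d+1}$. Summing over the value $k$ of $x_d$ therefore gives
\[
a_{ij}(d+1,s)=\sum_{k=0}^{s+1}a_{ik}(d,s)\,(M_d)_{kj}.
\]
Here $M_d=B_s$ if $\delta_d=0$ (that is, $d$ odd) and $M_d=\overline{B}_s$ if $d$ is even, since $(M_d)_{kj}$ is the indicator of $k+j\le s+\delta_d$. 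This gives $A_{d+1,s}=A_{d,s}M_d$.

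Finally, group the factors in pairs. For $d=2m+1$ there are $2m$ factors $B_s\overline{B}_s B_s\overline{B}_s\cdots$, which gives $(B_s\overline{B}_s)^m=C_s^m$. For $d=2m$ there are $2m-1$ factors: $m-1$ pairs followed by a final $B_s$, giving $C_s^{m-1}B_s$.

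The only point needing care is the bookkeeping that the $r$-th factor is $B_s$ for odd $r$, since $\delta_r=0$ for odd $r$. I also need to check that restricting indices to $\{0,\dots,s+1\}$ is consistent with the matrix entries: entries with $k+j\le s+\delta$ never require a value exceeding $s+1$. Neither point is a genuine obstacle.
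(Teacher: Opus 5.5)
Your proposal is correct and takes essentially the same route as the paper: induction on $d$, appending one constraint at a time so that $A_{d+1,s}=A_{d,s}B_s$ or $A_{d,s}\overline{B}_s$ according to the parity of $d$, and then reading off $C_s^{m}$ and $C_s^{m-1}B_s$. The differences are cosmetic: you prove the general alternating product $M_1\cdots M_{d-1}$ before grouping it into pairs, whereas the paper inducts directly in steps of two, and you make explicit two points the paper leaves implicit, namely the coordinate bound $x_i\le s+1$ and the summation over the value of $x_d$.
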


\begin{proof}
For $d=2$ the system consists only of $x_1 + x_2 \le s$, giving
$A_{2,s} = B_s$. For $d=3$ the inequalities are $x_1 + x_2 \le s$ and
$x_2 + x_3 \le s+1$, giving $A_{3,s} = B_s \overline{B}_s = C_s$.

Now suppose $A_{2m,s}=C_s^{m-1}B_s$ and $A_{2m+1,s}=C_s^m$ for some $m\ge1$.
Passing from dimension $2m$ to $2m+1$ appends the new constraint
$x_{2m}+x_{2m+1}\le s+1$, so
\[
A_{2m+1,s}=A_{2m,s}\,\overline{B}_s=C_s^{m-1}B_s\overline{B}_s=C_s^m.
\]
Passing from dimension $2m+1$ to $2m+2$ then appends the new constraint
$x_{2m+1}+x_{2m+2}\le s$, so
\[
A_{2m+2,s}=A_{2m+1,s}B_s=C_s^mB_s.
\]
This proves \eqref{eq-A-even} and \eqref{eq-A-odd} by induction.
\end{proof}

Setting $u_s := (1,1,\dots,1)^\top \in \R^{s+2}$, we may write
$N_d(s) = u_s^\top A_{d,s}\, u_s$, and the following corollary records the
resulting matrix-resolvent expressions.

\begin{corollary}\label{cor-Nd-matrix}
For every $m \ge 1$,
\[
N_{2m}(s) = u_s^\top C_s^{\,m-1} B_s\, u_s,
\qquad
N_{2m+1}(s) = u_s^\top C_s^{\,m}\, u_s.
\]
\end{corollary}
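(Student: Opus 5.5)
The corollary follows directly from Proposition~\ref{prop-transfer} together with the summation formula \eqref{eq-Nd-sum}. I will first rewrite \eqref{eq-Nd-sum} in matrix form. For any $(s+2)\times(s+2)$ matrix $M$ indexed by $\{0,\dots,s+1\}$, the quadratic form $u_s^\top M u_s$ equals $\sum_{i,j}M_{ij}$, because $u_s$ is the all-ones vector. Applying this with $M=A_{d,s}$ gives
\[
N_d(s)=u_s^\top A_{d,s}\,u_s .
\]

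Two small points justify this step. First, every coordinate of a lattice point of $\cP_d^{(s)}$ lies in $\{0,\dots,s+1\}$, since each $x_r$ appears in some constraint with bound at most $s+1$ and the other variable in that constraint is nonnegative. Second, the boundary values $x_1=i$ and $x_d=j$ therefore range exactly over the index set of $A_{d,s}$. Together these show that the double sum \eqref{eq-Nd-sum} counts every lattice point exactly once.

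Next I substitute the two cases of Proposition~\ref{prop-transfer}. For even dimension $d=2m$ with $m\ge1$, equation \eqref{eq-A-even} gives $A_{2m,s}=C_s^{m-1}B_s$, and hence $N_{2m}(s)=u_s^\top C_s^{m-1}B_s u_s$. For odd dimension $d=2m+1$, equation \eqref{eq-A-odd} gives $A_{2m+1,s}=C_s^m$, and hence $N_{2m+1}(s)=u_s^\top C_s^m u_s$.

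There is no real obstacle in this argument. The only thing to check is the range of indices: the proposition is stated for $m\ge1$, which covers $d=2$ with $C_s^0=I$ and $d=3$. These are exactly the dimensions $d\ge2$ for which $N_d(s)$ is defined, so the corollary holds for every $m\ge1$.
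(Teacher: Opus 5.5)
Your proposal is correct and follows the paper's proof exactly: write $N_d(s)=u_s^\top A_{d,s}u_s$ using \eqref{eq-Nd-sum}, then substitute the two cases of Proposition~\ref{prop-transfer}. Your extra check that every coordinate lies in $\{0,\dots,s+1\}$ is a harmless addition; the paper leaves it implicit in \eqref{eq-Nd-sum}.
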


\begin{proof}
By \eqref{eq-Nd-sum}, $N_d(s)$ is the sum of all entries of $A_{d,s}$.
Equivalently, if $u_s=(1,\ldots,1)^\top$, then
\[
N_d(s)=u_s^\top A_{d,s}u_s.
\]
Substituting the two transfer-matrix formulas of
Proposition~\ref{prop-transfer}, namely
$A_{2m,s}=C_s^{m-1}B_s$ and $A_{2m+1,s}=C_s^m$, gives the two displayed
identities.
\end{proof}

\begin{lemma}[Resolvent expressions]\label{lem-resolvent}
For every $s \ge 1$,
\begin{equation}\label{eq-Phi-resolvent}
\Phi_s(y) = u_s^\top (I - y C_s)^{-1} u_s
= \frac{u_s^\top (I - y C_s)^* u_s}{\det(I - y C_s)},
\end{equation}
and
\begin{equation}\label{eq-Psi-resolvent}
\Psi_s(y) = u_s^\top (I - y C_s)^{-1} B_s\, u_s
= \frac{u_s^\top (I - y C_s)^* B_s\, u_s}{\det(I - y C_s)},
\end{equation}
where $M^*$ denotes the classical adjugate of $M$.
\end{lemma}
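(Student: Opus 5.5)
The plan is to read both identities as equalities of formal power series in $y$, and then also as analytic identities near $y=0$. The input is Corollary~\ref{cor-Nd-matrix}, together with the Neumann series for the resolvent.

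First I will note that $\det(I-yC_s)$ is a polynomial in $y$ with constant term $1$. It is therefore a unit in $\R[[y]]$, so $(I-yC_s)^{-1}$ exists as a matrix with entries in $\R[[y]]$. Multiplying the geometric series $\sum_{m\ge0}y^mC_s^m$ by $I-yC_s$ telescopes to $I$, which gives
\[
(I-yC_s)^{-1}=\sum_{m\ge0}y^mC_s^{m}
\]
as formal power series. Analytically, the same series converges for $|y|<1/\rho(C_s)$, where $\rho(C_s)$ is the spectral radius of $C_s$. Hence all the identities below also hold as identities of analytic functions near the origin.

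Next I will compare coefficients in the first identity. We have
\[
u_s^\top(I-yC_s)^{-1}u_s=\sum_{m\ge0}\bigl(u_s^\top C_s^m u_s\bigr)y^m.
\]
For $m\ge1$, Corollary~\ref{cor-Nd-matrix} gives $u_s^\top C_s^m u_s=N_{2m+1}(s)$. For $m=0$ the coefficient is $u_s^\top u_s=s+2$, which is exactly the auxiliary normalization $\widetilde N_1(s)$ in \eqref{eq-PhiPsi-def}. This constant term is the one point needing care, and it explains why the normalization $s+2$ (rather than $L_1=s+1$) was chosen. Comparing with \eqref{eq-PhiPsi-def} yields the first equality in \eqref{eq-Phi-resolvent}.

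For the second identity, the same expansion gives
\[
u_s^\top(I-yC_s)^{-1}B_su_s=\sum_{m\ge0}\bigl(u_s^\top C_s^mB_su_s\bigr)y^m.
\]
Applying Corollary~\ref{cor-Nd-matrix} with index $m+1$ gives $u_s^\top C_s^{m}B_su_s=N_{2m+2}(s)$ for every $m\ge0$. This matches $\Psi_s$ coefficient by coefficient, including the constant term $N_2(s)=u_s^\top B_su_s$.

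Finally, the second equality in each of \eqref{eq-Phi-resolvent} and \eqref{eq-Psi-resolvent} follows from Cramer's rule, $M^{-1}=M^*/\det M$, applied to $M=I-yC_s$ over the field $\R(y)$ (or over $\R[[y]]$, where $\det M$ is invertible). The numerators are then polynomials in $y$, because the adjugate has polynomial entries. I expect no real obstacle here. The only points to handle carefully are the constant-term convention for $\Phi_s$ and the justification of the Neumann series, either formally or inside the disc of convergence.
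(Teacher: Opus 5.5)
Your proposal is correct and follows essentially the paper's argument: expand the resolvent as the geometric series $\sum_{m\ge0}y^mC_s^m$, match coefficients using Corollary~\ref{cor-Nd-matrix} (the constant term $u_s^\top u_s=s+2$ is exactly the normalization $\widetilde N_1(s)$), and then apply $M^{-1}=M^*/\det M$. Your added care about the formal-power-series setting and the constant term makes explicit what the paper leaves implicit; the paper records the constant-term convention in Remark~\ref{rem-N1-convention}.
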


\begin{proof}
By Corollary~\ref{cor-Nd-matrix},
\[
\Phi_s(y)
= \sum_{m \ge 0} u_s^\top C_s^m\, u_s \cdot y^m
= u_s^\top \Bigl(\sum_{m \ge 0} (y C_s)^m\Bigr) u_s
= u_s^\top (I - y C_s)^{-1} u_s.
\]
The adjugate form follows from $(I - y C_s)^{-1} = (I - y C_s)^*/\det(I - y C_s)$.
The identity for $\Psi_s$ is obtained in the same way.
\end{proof}

\begin{remark}[Normalization convention]\label{rem-N1-convention}
The generating function $\Phi_s(y)$ uses the auxiliary normalization
$\widetilde N_1(s):=s+2$ as its constant term, chosen precisely so that the
resolvent identity $\Phi_s(y)=u_s^\top(I-yC_s)^{-1}u_s$ holds
(the $m=0$ term of $u_s^\top C_s^0 u_s = \|u_s\|_1 = s+2$).
\emph{This differs from the actual one-dimensional lattice-point count}
$\#([0,s]\cap\Z)=s+1$; the series $\Phi_s(y)$ is therefore \emph{not}
the generating function of $\{N_d(s)\}_{d\ge1}$ starting from $d=1$.
To avoid conflating these two quantities, we reserve $L_1^{(s)}(m)=ms+1$
for the Ehrhart-theoretic convention used in Section~\ref{sec-3}.
\end{remark}

\subsection{The polynomial families \texorpdfstring{$P_n$}{Pn} and \texorpdfstring{$Q_n$}{Qn}}

The polynomials $P_n(-y)$ and $Q_n(-y)$ introduced in~\eqref{eq-PQ-def}
will be identified in \S\ref{sec-2}.3 as the adjugate numerators
$u_s^\top(I-yC_s)^*u_s$ and $u_s^\top(I-yC_s)^*B_su_s$, and as the
denominator $\det(I-yC_s)$, of the resolvents
\eqref{eq-Phi-resolvent}--\eqref{eq-Psi-resolvent}.  We collect here the
algebraic and trigonometric properties of these families that will be used
throughout the section.

\begin{lemma}[First-order system]\label{lem-PQ-rec1}
The pair $(P_n(-y),\, Q_n(-y))$ satisfies the \emph{first-order} system
(each member expressed in terms of both members at the previous index):
\begin{align}
P_n(-y) &= P_{n-1}(-y) + Q_{n-1}(-y),\label{eq-P-rec1}\\
Q_n(-y) &= Q_{n-1}(-y) - y\, P_{n-1}(-y).\label{eq-Q-rec1}
\end{align}
\end{lemma}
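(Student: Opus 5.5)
The plan is to work directly from the definition of $P_n$ and $Q_n$ as the odd and even parts of $(1+\sqrt y)^n$, and to prove both relations at the level of formal power series in $t=\sqrt y$. Set
\[
(1+t)^n = Q_n(t^2) + t\,P_n(t^2),
\]
which holds by \eqref{eq-PQ-def}: the even-index binomial terms $\binom{n}{2k}t^{2k}$ give $Q_n(t^2)$, and the odd-index terms $\binom{n}{2k+1}t^{2k+1}$ give $tP_n(t^2)$. Multiply the identity for $n-1$ by $(1+t)$:
\[
(1+t)^n=(1+t)\bigl(Q_{n-1}(t^2)+tP_{n-1}(t^2)\bigr)
= \bigl(Q_{n-1}(t^2)+t^2P_{n-1}(t^2)\bigr) + t\bigl(P_{n-1}(t^2)+Q_{n-1}(t^2)\bigr).
\]

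Next compare even and odd parts in $t$. This is legitimate because $Q_\bullet(t^2)$ and $t^2P_\bullet(t^2)$ are even in $t$, while $tP_\bullet(t^2)$ and $tQ_\bullet(t^2)$ are odd. The comparison gives
\[
P_n(y)=P_{n-1}(y)+Q_{n-1}(y), \qquad Q_n(y)=Q_{n-1}(y)+y\,P_{n-1}(y)
\]
as polynomial identities in $y=t^2$. Substituting $y\mapsto -y$ then yields \eqref{eq-P-rec1} and \eqref{eq-Q-rec1}.

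As an alternative check, and in case one wants to avoid the square root altogether, Pascal's rule does the same job coefficientwise. The rule $\binom{n}{2k+1}=\binom{n-1}{2k+1}+\binom{n-1}{2k}$ gives the $P$-relation directly. The rule $\binom{n}{2k}=\binom{n-1}{2k}+\binom{n-1}{2k-1}$ gives the $Q$-relation once one reindexes: $\binom{n-1}{2(k-1)+1}$ is the coefficient of $y^{k-1}$ in $P_{n-1}$, which accounts for the factor $y$ (and the sign after $y\mapsto-y$).

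The only real point of care, and hence the main obstacle, is the index range. The upper limits $\lfloor (n-1)/2\rfloor$ and $\lfloor n/2\rfloor$ must be handled by the convention that $\binom{a}{b}=0$ for $b>a$ or $b<0$, so that the boundary terms match. The generating-function argument sidesteps this automatically, so I would present that argument as the proof and mention Pascal's rule only as a remark. The statement is meant for $n\ge1$, with $P_0=0$ and $Q_0=1$, which are consistent with $(1+t)^0=1$.
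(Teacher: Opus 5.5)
Your proof is correct and is essentially the paper's argument. The paper applies Pascal's rule coefficientwise, exactly as in your ``alternative check,'' including the reindexing $k\mapsto k-1$ that produces the factor $-y$. Multiplying $(1+t)^{n-1}=Q_{n-1}(t^2)+tP_{n-1}(t^2)$ by $(1+t)$ and splitting into even and odd parts is that same identity in generating-function form, with the small bonus that the floor-function summation limits are handled automatically.
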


\begin{proof}
Both identities are immediate from Pascal's identity:
$\binom{n}{2k+1} = \binom{n-1}{2k+1} + \binom{n-1}{2k}$ gives \eqref{eq-P-rec1}
(the $k$-th coefficient of $P_n(-y)$ splits as the $k$-th coefficient of
$P_{n-1}(-y)$ plus the $k$-th coefficient of $Q_{n-1}(-y)$), and
$\binom{n}{2k} = \binom{n-1}{2k} + \binom{n-1}{2k-1}$
gives \eqref{eq-Q-rec1} after reindexing the summation variable
($k \mapsto k-1$ in the second sum introduces a factor of $-y$).
\end{proof}

\begin{remark}
The first-order system will be used again in Lemma~\ref{lem-tan-ratio} to derive the tangent ratio identity and later in the numerator computations of Section~\ref{sec-2}.3.
\end{remark}

\begin{lemma}[Second-order recurrence]\label{lem-PQ-rec2}
The sequences $P_n(-y)$ and $Q_n(-y)$ each satisfy, for $n \ge 2$, the
\emph{second-order} linear recurrence:
\begin{align}
P_n(-y) &= 2P_{n-1}(-y) - (1+y)P_{n-2}(-y),\label{eq-P-rec2}\\
Q_n(-y) &= 2Q_{n-1}(-y) - (1+y)Q_{n-2}(-y).\label{eq-Q-rec2}
\end{align}
\end{lemma}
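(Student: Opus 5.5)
The plan is to obtain the second-order recurrences by eliminating one member of the first-order system of Lemma~\ref{lem-PQ-rec1}. Two routes are available: a direct elimination, or the closed form $(1+\sqrt{-y})^n$. We follow the elimination and keep the closed form only as a cross-check. Write $p_n:=P_n(-y)$ and $q_n:=Q_n(-y)$. Lemma~\ref{lem-PQ-rec1} says that the vector $(p_n,q_n)^\top$ equals $M(p_{n-1},q_{n-1})^\top$, where
\[
M=\begin{pmatrix}1&1\\-y&1\end{pmatrix},
\qquad \operatorname{tr}M=2,\qquad \det M=1+y .
\]
By Cayley--Hamilton, $M^2=2M-(1+y)I$. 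Applying this to $(p_{n-2},q_{n-2})^\top$ gives both recurrences at once, componentwise.

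To stay closer to the paper's style, we will also write the elimination out explicitly for $n\ge2$. For $p$, use \eqref{eq-P-rec1} to write $q_{n-1}=p_n-p_{n-1}$; shifting the index gives $q_{n-2}=p_{n-1}-p_{n-2}$. Substituting into \eqref{eq-Q-rec1} at index $n-1$, namely $q_{n-1}=q_{n-2}-y\,p_{n-2}$, yields
\[
p_n-p_{n-1}=p_{n-1}-p_{n-2}-y\,p_{n-2},
\]
which is \eqref{eq-P-rec2}. For $q$, use \eqref{eq-Q-rec1} to write $y\,p_{n-1}=q_{n-1}-q_n$ and $y\,p_{n-2}=q_{n-2}-q_{n-1}$. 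Multiplying \eqref{eq-P-rec1} at index $n-1$ by $y$ and substituting gives
\[
q_{n-1}-q_n=q_{n-2}-q_{n-1}+y\,q_{n-2},
\]
which is \eqref{eq-Q-rec2}. This derivation divides by nothing, so it is valid as a polynomial identity in $y$.

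The only point needing care is the range of indices. The first-order system must hold at index $n-1\ge1$, which requires $P_0=0$ and $Q_0=1$. Both values follow from the definitions \eqref{eq-PQ-def}, since the sum defining $P_0$ is empty, and Pascal's identity with the convention $\binom{n-1}{-1}=0$ covers the boundary terms. The only potential obstacle is therefore the bookkeeping of the case $n=2$, and the values $P_0=0$ and $Q_0=1$ settle it.

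As a cross-check, $q_n\pm\sqrt{-y}\,p_n=(1\pm\sqrt{-y})^n$. Both roots $1\pm\sqrt{-y}$ satisfy $t^2-2t+(1+y)=0$, which confirms the recurrences independently.
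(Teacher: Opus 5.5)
Your proposal is correct and follows the paper's route: the paper likewise eliminates $Q_{n-1}(-y)$ and $Q_{n-2}(-y)$ using the first-order system of Lemma~\ref{lem-PQ-rec1}, with exactly your substitutions for $P_n(-y)$, and only asserts that the $Q$-recurrence follows analogously. Your explicit $Q$-elimination, the Cayley--Hamilton packaging $M^2=2M-(1+y)I$, and your check that $P_0=0$, $Q_0=1$ cover the case $n=2$ are useful additions that fill in details the paper leaves implicit.
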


\begin{proof}
We derive both identities from Lemma~\ref{lem-PQ-rec1}.
Applying \eqref{eq-P-rec1} twice gives
\[
P_n(-y) = P_{n-1}(-y) + Q_{n-1}(-y).
\]
From \eqref{eq-Q-rec1} at step $n-1$: $Q_{n-1}(-y) = Q_{n-2}(-y) - yP_{n-2}(-y)$.
Substituting and using \eqref{eq-P-rec1} at step $n-1$ to write
$Q_{n-2}(-y) = P_{n-1}(-y) - P_{n-2}(-y)$:
\[
P_n(-y) = P_{n-1}(-y) + P_{n-1}(-y) - P_{n-2}(-y) - yP_{n-2}(-y)
= 2P_{n-1}(-y) - (1+y)P_{n-2}(-y).
\]
The identity \eqref{eq-Q-rec2} follows analogously by eliminating $P_{n-1}$
from the system \eqref{eq-P-rec1}--\eqref{eq-Q-rec1}.
\end{proof}

\begin{remark}[Chebyshev connection]\label{rem-chebyshev}
Recall that the Chebyshev polynomials of the first and second kinds are defined by
\[
T_n(\cos\theta)=\cos(n\theta),
\qquad
U_{n-1}(\cos\theta)=\frac{\sin(n\theta)}{\sin\theta}
\qquad (n\ge1).
\]
From the decomposition $(1 + \sqrt{y})^n = Q_n(y) + \sqrt{y}\,P_n(y)$, setting
$y = -\tan^2\theta$ gives
$(1 + i\tan\theta)^n = Q_n(-\tan^2\theta) + i\tan\theta\cdot P_n(-\tan^2\theta)$.
Since $1 + i\tan\theta = e^{i\theta}/\cos\theta$, taking real and imaginary parts
yields the exact identities
\begin{equation}\label{eq-QP-trig}
Q_n(-\tan^2\theta) = \frac{\cos(n\theta)}{\cos^n\theta},
\qquad
P_n(-\tan^2\theta) = \frac{\sin(n\theta)}{\sin\theta\,\cos^{n-1}\theta}.
\end{equation}
Equivalently,
\[
Q_n(-\tan^2\theta)=\frac{T_n(\cos\theta)}{\cos^n\theta},
\qquad
P_n(-\tan^2\theta)=\frac{U_{n-1}(\cos\theta)}{\cos^{n-1}\theta}.
\]
Thus $Q_n(-\tan^2\theta)$ is a rescaled Chebyshev polynomial of the first kind,
while $P_n(-\tan^2\theta)$ is the corresponding rescaled Chebyshev polynomial of
the second kind. These identities underlie the determinant formula in
Lemma~\ref{lem-det} and the tangent ratio in Lemma~\ref{lem-tan-ratio}.
\end{remark}

\begin{lemma}[Tangent ratio]\label{lem-tan-ratio}
Let $y = \tan^2 x$. Then for every integer $n \ge 1$,
\begin{equation}\label{eq-tan-ratio}
\frac{P_n(-y)}{Q_n(-y)} = \frac{\tan(nx)}{\tan x}.
\end{equation}
\end{lemma}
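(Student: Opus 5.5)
The most direct route is to specialize the Chebyshev identities \eqref{eq-QP-trig} of Remark~\ref{rem-chebyshev}. Note that the lemma writes $y=\tan^2 x$, so $-y=-\tan^2 x$; this is exactly the argument at which \eqref{eq-QP-trig} evaluates $P_n$ and $Q_n$, with $\theta=x$. Substituting gives
\[
\frac{P_n(-\tan^2 x)}{Q_n(-\tan^2 x)}
=\frac{\sin(nx)}{\sin x\,\cos^{n-1}x}\cdot\frac{\cos^n x}{\cos(nx)}
=\frac{\sin(nx)\cos x}{\sin x\cos(nx)}
=\frac{\tan(nx)}{\tan x}.
\]
This is the claimed identity, valid wherever $\sin x$, $\cos x$ and $\cos(nx)$ are nonzero.

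I would first re-derive \eqref{eq-QP-trig} cleanly, since the remark only sketches it. Start from $(1+i\tan x)^n=\sum_j\binom{n}{j}i^j\tan^j x$. The even-indexed terms give $Q_n(-\tan^2x)$, because $i^{2k}\tan^{2k}x=(-\tan^2x)^k$. The odd-indexed terms give $i\tan x\,P_n(-\tan^2x)$. Since $1+i\tan x=e^{ix}/\cos x$, the left side equals $(\cos nx+i\sin nx)/\cos^n x$. Comparing real and imaginary parts, which is legitimate for real $x$, yields both formulas.

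An alternative proof avoids complex numbers. Use induction on $n$ with the first-order system of Lemma~\ref{lem-PQ-rec1}. Setting $R_n:=P_n(-y)/Q_n(-y)$, the system gives
\[
R_n=\frac{R_{n-1}+1}{1-yR_{n-1}}.
\]
Suppose $R_{n-1}=\tan((n-1)x)/\tan x$ and put $t=\tan x$. Then $R_n=(\tan((n-1)x)+t)/\bigl(t(1-t\tan((n-1)x))\bigr)$, which equals $\tan(nx)/t$ by the tangent addition formula. The base case is $R_1=P_1/Q_1=1$.

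The only genuine obstacle is the domain of validity. One must restrict to $x$ with $\tan x\ne0$ and $\cos(nx)\ne0$, for example $0<x<\pi/(2n)$, or else read the identity as one between rational functions of $y$. For the later applications, with $y$ near $0$ and $n=s+2$, the interval $0<x<\pi/(2n)$ suffices. Both sides extend analytically to $y=0$, where each equals $n$, so the identity holds there as well by continuity.
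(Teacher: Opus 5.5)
Your proposal is correct and matches the paper's proof. The paper uses the same induction via the first-order system of Lemma~\ref{lem-PQ-rec1} with the tangent addition formula, and the same direct derivation from \eqref{eq-QP-trig}; it presents the induction first and the Chebyshev route second, the reverse of your order. Your remark on the domain of validity (e.g.\ $0<x<\pi/(2n)$, extended to $y=0$ by continuity, or reading the identity as one of rational functions in $y$) is a useful precision that the paper leaves implicit.
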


\begin{proof}
We argue by induction on $n$. The case $n=1$ is immediate since
$P_1(-y)=1$, $Q_1(-y)=1$, and $\tan(x)/\tan(x)=1$. By Lemma~\ref{lem-PQ-rec1},
\[
\frac{P_n(-y)}{Q_n(-y)}
= \frac{\rho_{n-1} + 1}{1 - y \rho_{n-1}},
\qquad
\rho_{n-1} := \frac{P_{n-1}(-y)}{Q_{n-1}(-y)}.
\]
By the induction hypothesis $\rho_{n-1} = \tan((n-1)x)/\tan x$ and $y = \tan^2 x$, so
\[
\frac{P_n(-y)}{Q_n(-y)}
= \frac{\tfrac{\tan((n-1)x)}{\tan x} + 1}{1 - \tan^2\!x\cdot\tfrac{\tan((n-1)x)}{\tan x}}
= \frac{1}{\tan x}\cdot
  \frac{\tan((n-1)x) + \tan x}{1 - \tan((n-1)x)\tan x}
= \frac{\tan(nx)}{\tan x},
\]
by the addition formula for tangent.

Alternatively, the identity follows directly from \eqref{eq-QP-trig}: if
$y=\tan^2 x$, then
\[
\frac{P_n(-y)}{Q_n(-y)}
=
\frac{\sin(nx)/(\sin x\,\cos^{n-1}x)}{\cos(nx)/\cos^n x}
=\frac{\sin(nx)\cos x}{\sin x\cos(nx)}
=\frac{\tan(nx)}{\tan x}.
\]
\end{proof}

\subsection{Evaluation of the denominator and numerators}

\begin{lemma}\label{lem-det}
For every integer $s \ge 0$,
\begin{equation}\label{eq-det-main}
\det(I - y C_s) = Q_{s+2}(-y).
\end{equation}
\end{lemma}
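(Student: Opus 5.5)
The plan is to turn $C_s$ into a product of triangular matrices by means of the reversal permutation, and then reduce $\det(I-yC_s)$ to the determinant of a Toeplitz--Hessenberg matrix of size $n:=s+2$. Let $J$ be the $(s+2)\times(s+2)$ reversal matrix $e_i\mapsto e_{s+1-i}$ (indices $0,\dots,s+1$). From \eqref{eq-BbarB-def} we get $(\overline{B}_sJ)_{ij}=1$ iff $i+(s+1-j)\le s+1$, i.e.\ iff $i\le j$. Likewise $(B_sJ)_{ij}=1$ iff $i<j$. Thus $B_s=NJ$ and $\overline{B}_s=UJ$, where $N$ is the strictly upper triangular all-ones matrix and $U$ is the upper triangular all-ones matrix. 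Since $J^2=I$ and $JUJ=L$ is the lower triangular all-ones matrix, we obtain
\[
C_s=NJUJ=NL,\qquad \det(I-yC_s)=\det(L)\det(L^{-1}-yN)=\det(L^{-1}-yN).
\]
Here $L^{-1}=I-S$, with $S$ the subdiagonal shift. Hence $M_n:=L^{-1}-yN$ has $1$ on the diagonal, $-1$ on the subdiagonal, and $-y$ everywhere strictly above the diagonal.

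Next I will evaluate $D_n:=\det M_n$ for all $n\ge1$ and show $D_n=Q_n(-y)$. The base cases are direct: $D_1=1=Q_1(-y)$ and $D_2=1-y=Q_2(-y)$. One can also check $D_3=1-3y=Q_3(-y)$ by hand. For the induction I will simplify the full rows of $-y$ by row operations. For $i=1,\dots,n-1$, replace row $i$ by (row $i$) $-$ (row $i+1$), always using the original rows; this does not change the determinant. In each new row $i<n$ the entries become $-1$ in column $i-1$, $2$ in column $i$, and $-(1+y)$ in column $i+1$, with zeros elsewhere, because the $-y$ tails cancel. The last row stays $(0,\dots,0,-1,1)$. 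The result is a tridiagonal matrix. It is Toeplitz except in its last row, and also in its first row, which lacks the $-1$ entry.

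Expanding this tridiagonal determinant with the standard continuant recurrence, starting from the top-left corner, gives a three-term recurrence whose coefficients are $2$ and $-(-1)\cdot(-(1+y))=-(1+y)$. This is exactly the recurrence $D_n=2D_{n-1}-(1+y)D_{n-2}$ of Lemma~\ref{lem-PQ-rec2}. The non-Toeplitz last row only affects the ``initial'' data from the other end of the expansion. To handle it cleanly, I will expand from the bottom instead. I will first check that $E_k$, the determinant of the trailing $k\times k$ block, satisfies $E_1=1$, $E_2=2-(1+y)=1-y$, and $E_k=2E_{k-1}-(1+y)E_{k-2}$. Since $Q_1(-y)=1$ and $Q_2(-y)=1-y$ match these initial values, and the recurrence is that of Lemma~\ref{lem-PQ-rec2}, this forces $E_k=Q_k(-y)$. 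The full matrix's top-left row is also of the interior form $(2,-(1+y))$, so $D_n=E_n=Q_n(-y)$. Taking $n=s+2$ then proves \eqref{eq-det-main}, including $s=0$.

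I expect the main obstacle to be the bookkeeping of the boundary rows after the row reduction. In particular, I must check that the last row (left unchanged) yields the correct seed $E_1=1$ and $E_2=1-y$, rather than a shifted recurrence. If that bookkeeping becomes messy, the fallback is a generating-function evaluation of the Toeplitz--Hessenberg determinant. Expanding $D_n$ along the first column gives $D_n=D_{n-1}-y\sum_{k\ge1}(1+y)^{k-1}(\cdots)$, and one can compare the resulting $\sum_n D_nt^n$ with $\sum_n Q_n(-y)t^n=\frac{1-t}{1-2t+(1+y)t^2}$.
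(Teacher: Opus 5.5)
Your proof is correct, and it takes a genuinely different route from the paper's.

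The paper first computes the entries $(C_s)_{ij}=(s+1-\max(i,j-1))_+$. It then applies one column operation and one row operation so that the first row of $I-yC_s$ becomes $(2,-(1+y),0,\dots,0)$. Expanding along that row, it identifies the $(1,1)$ minor with $\det(I-yC_{s-1})$. After one further expansion along a column with a single $-1$, it identifies the $(1,2)$ minor with $\det(I-yC_{s-2})$. This gives a recurrence in $s$ with seeds $s=0,1$.

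You instead factor $B_s=NJ$ and $\overline B_s=UJ$ through the reversal matrix, so $C_s=NL$ and $\det(I-yC_s)=\det(I-S-yN)$. You then tridiagonalize by adjacent row differences and run a continuant recurrence.

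What your route buys:
\begin{itemize}
\item You never need the entries of $C_s$.
\item The paper's relabeling argument for the minors is replaced by a standard continuant computation. You check its boundary data explicitly: $E_1=1$, $E_2=1-y$, and the top row of the full reduced matrix has the interior form $(2,-(1+y),0,\dots,0)$.
\item It shows that three of the paper's computations are one computation. Your Hessenberg matrix $I-S-yN$ is exactly the matrix $A_{s+2}(y)$ whose determinant the paper evaluates in the proof of Lemma~\ref{lem-col-adj-Q}. Also, since $L(I-S)=I$, your reduced matrix $(I-S^\top)(I-S-yN)=(I-S^\top)(I-yC_s)(I-S)$ is precisely the tridiagonal matrix $L_nM_s(y)R_n$ of Lemma~\ref{lem-R-antidiagonal-reduction}.
\end{itemize}
The paper's route, in turn, works with the explicit staircase form \eqref{eq-Ms-explicit}, which it reuses in Lemma~\ref{lem-col-adj-Q}.

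One small inaccuracy: the reduced tridiagonal matrix departs from Toeplitz form only in its $(n,n)$ entry, which is $1$ rather than $2$. A tridiagonal first row never carries a subdiagonal entry, so nothing is special at the top. Your generating-function fallback is therefore unnecessary.
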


\begin{proof}
Set $M_s:=I-yC_s$. We first derive the explicit form of $M_s$ from the
definition of $C_s = B_s\overline{B}_s$.

\medskip
\noindent\textit{Step 1: Computing $(C_s)_{ij}$.}
By definition,
\[
(C_s)_{ij}
= \sum_{k=0}^{s+1}(B_s)_{ik}(\overline{B}_s)_{kj}
= \#\{k \ge 0 : i+k \le s \text{ and } k+j \le s+1\}
= \bigl(\min(s-i,\, s+1-j)+1\bigr)_+,
\]
where $(u)_+ := \max(u,0)$. The identity
$\min(s-i,\,s+1-j) = s - \max(i,\,j-1)$
holds because $s-i \le s+1-j \iff j \le i+1$.  Therefore
\begin{equation}\label{eq-Cij}
(C_s)_{ij} = \bigl(s+1-\max(i,j-1)\bigr)_+
\qquad (0 \le i,j \le s+1).
\end{equation}
In particular: the last row ($i = s+1$) is identically zero; for $0 \le i \le s$ the
entry equals $s-i+1$ when $j \le i+1$, and equals $s+2-j$ when $j > i+1$.  This
gives the staircase structure
\begin{equation}\label{eq-Ms-explicit}
M_s=
\begin{pmatrix}
1-(s+1)y & -(s+1)y & -sy & \cdots & -2y & -y\\
-sy & 1-sy & -sy & \cdots & -2y & -y\\
-(s-1)y & -(s-1)y & 1-(s-1)y & \cdots & -2y & -y\\
\vdots & \vdots & \vdots & \ddots & \vdots & \vdots\\
-y & -y & -y & \cdots & 1-y & -y\\
0 & 0 & 0 & \cdots & 0 & 1
\end{pmatrix},
\end{equation}
where the $(i,j)$-entry is $\delta_{ij} - (s+1-\max(i,j-1))_+ \cdot y$.

\medskip
\noindent\textit{Step 2: Row and column reduction.}
We apply the following two determinant-preserving operations to
\eqref{eq-Ms-explicit}, in the order stated:
\begin{enumerate}[label=(\roman*),leftmargin=2em]
\item \emph{Column operation}: replace column~$1$ by (column~$1$)$-$(column~$2$).
After this operation, the $(i,1)$-entry of the resulting matrix $M_s'$ is
$(M_s)_{i1}-(M_s)_{i2}$. Using \eqref{eq-Ms-explicit},
\begin{align*}
(M_s')_{11} &= \bigl(1-(s+1)y\bigr)-\bigl(-(s+1)y\bigr) = 1,\\
(M_s')_{21} &= -sy-(1-sy) = -1,\\
(M_s')_{i1} &= 0\quad\text{for }i\ge 3.
\end{align*}
Columns $2,\dots,s+2$ are unchanged.
\item \emph{Row operation}: replace row~$1$ by (row~$1$)$-$(row~$2$).
Row~$1$ of $M_s'$ is
\[
(1,\,-(s+1)y,\,-sy,\,\dots,\,-2y,\,-y),
\]
and row~$2$ is
\[
(-1,\,1-sy,\,-sy,\,\dots,\,-2y,\,-y).
\]
Subtracting gives
\[
\widetilde r_1
=\bigl(2,\,-(1+y),\,0,\,\dots,\,0\bigr).
\]
\end{enumerate}
The result is the matrix
\[
\widetilde M_s=
\begin{pmatrix}
2 & -(1+y) & 0 & \cdots & 0 & 0\\
-1 & 1-sy & -sy & \cdots & -2y & -y\\
0 & -(s-1)y & 1-(s-1)y & \cdots & -2y & -y\\
\vdots & \vdots & \vdots & \ddots & \vdots & \vdots\\
0 & -y & -y & \cdots & 1-y & -y\\
0 & 0 & 0 & \cdots & 0 & 1
\end{pmatrix}.
\]

\medskip
\noindent\textit{Step 3: Recursion.}
Expanding along the first row of $\widetilde M_s$,
\begin{equation}\label{eq-detCs}
\det(M_s)=2\det(M_{s-1})-(1+y)\det(M_{s-2}), \qquad s\ge2.
\end{equation}
The $(1,1)$ cofactor deletes row and column $1$ from $\widetilde M_s$, leaving the
$(s+1)\times(s+1)$ submatrix occupying rows/columns $2,\ldots,s+2$; a direct
inspection of \eqref{eq-Ms-explicit} shows that this submatrix is exactly
$M_{s-1}$ after relabeling indices $i \mapsto i-1$ (the staircase structure
\eqref{eq-Cij} is preserved under this relabeling).

The $(1,2)$ cofactor deletes row $1$ and column $2$ from $\widetilde M_s$. The
resulting $s\times s$ submatrix (rows $2,\ldots,s+2$, columns $1,3,4,\ldots,s+2$) may be relabeled to match $M_{s-2}$: its first column comes from the entry $-1$ in row $2$ of $\widetilde M_s$, while the remaining columns inherit the same staircase pattern as in \eqref{eq-Ms-explicit}. Thus the cofactor contributes exactly the $M_{s-2}$ term, and the combined effect of the prefactor $-(1+y)$ and the cofactor sign gives the coefficient $-(1+y)$ in \eqref{eq-detCs}.

By Lemma~\ref{lem-PQ-rec2}, the sequence $Q_{s+2}(-y)$ satisfies the same
recurrence. The initial values agree as well, since
\[
\det(M_0)=1-y=Q_2(-y), \qquad \det(M_1)=1-3y=Q_3(-y),
\]
which are verified directly from \eqref{eq-Ms-explicit} for $s=0,1$.
Therefore $\det(M_s)=Q_{s+2}(-y)$ for all $s\ge0$.
\end{proof}

The next lemma identifies the column sums of the adjugate and is the main
input for the two numerator computations that follow.

\begin{lemma}\label{lem-col-adj-Q}
For every integer $s \ge 0$ and each $1 \le i \le s+2$,
\begin{equation}\label{eq-col-sum-adj}
u_s^\top (I - yC_s)^* e_i = Q_{i-1}(-y),
\end{equation}
where $e_i$ denotes the $i$-th standard basis vector of $\R^{s+2}$.
\end{lemma}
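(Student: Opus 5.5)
The plan is to characterize the row vector $w^\top:=u_s^\top(I-yC_s)^*$ by a linear system rather than by computing cofactors directly. Since $(I-yC_s)^*(I-yC_s)=\det(I-yC_s)\,I$, Lemma~\ref{lem-det} gives
\[
w^\top(I-yC_s)=Q_{s+2}(-y)\,u_s^\top .
\]
Because $\det(I-yC_s)=Q_{s+2}(-y)$ is a nonzero polynomial, $I-yC_s$ is invertible over $\R(y)$. So this system has exactly one solution, namely $w$. It therefore suffices to exhibit the candidate $q:=(Q_0(-y),Q_1(-y),\dots,Q_{s+1}(-y))^\top$, indexed by $0,\dots,s+1$ to match \eqref{eq-Cij}, and verify the polynomial identity $q^\top(I-yC_s)=Q_{s+2}(-y)\,u_s^\top$ column by column. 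Uniqueness then gives $w=q$, which is \eqref{eq-col-sum-adj} after the shift of index $i\mapsto i-1$.

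To carry out the verification, I would first extract two telescoping sums from Lemma~\ref{lem-PQ-rec1}, writing $P_n,Q_n$ for $P_n(-y),Q_n(-y)$ and using $P_0=0$, $Q_0=1$. Summing \eqref{eq-P-rec1} gives $\sum_{i=0}^{n-1}Q_i=P_n$. Summing \eqref{eq-Q-rec1} gives $-y\sum_{i=0}^{n-1}P_i=Q_n-1$. Combining these yields the double-sum identity
\[
-y\sum_{i=0}^{n-1}(n-i)\,Q_i=-y\sum_{k=1}^{n}P_k=Q_{n+1}-1 .
\]

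Next I would evaluate column $j$ of $q^\top(I-yC_s)$, which equals $Q_j-y\sum_{i=0}^{s}Q_i\,(s+1-\max(i,j-1))$; row $s+1$ of $C_s$ vanishes by \eqref{eq-Cij}. The key rewriting is
\[
s+1-\max(i,j-1)=(s+1-i)-(j-1-i)_+ .
\]
The first piece contributes $-y\sum_{i=0}^{s}(s+1-i)Q_i=Q_{s+2}-1$, by the double-sum identity with $n=s+1$. The second piece contributes $+y\sum_{i<j-1}(j-1-i)Q_i=-(Q_j-1)$, by the same identity with $n=j-1$; it is empty for $j\le1$, which is consistent since $Q_0=Q_1=1$. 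Adding $Q_j$ gives $Q_j+(Q_{s+2}-1)-(Q_j-1)=Q_{s+2}$, uniformly in $j$, as required.

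The main obstacle is bookkeeping rather than ideas. One must match the $1$-based index $e_i$ in the statement with the $0$-based labels of \eqref{eq-Cij}, and check the boundary columns $j=0,1$, where $\max(i,j-1)=i$. One must also confirm that the zero last row of $C_s$ does not break the sum ranges. All of this is absorbed by the $(\cdot)_+$ rewriting above, so I would present the verification in that single uniform form.
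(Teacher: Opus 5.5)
Your proof is correct, and it takes a genuinely different route from the paper's.

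\textbf{The paper's route.} It computes each entry separately. By Cramer's rule, $u_s^\top(I-yC_s)^*e_i$ is the determinant of $I-yC_s$ with its $i$-th row replaced by $u_s^\top$. Row operations using that all-ones row make the matrix block upper triangular. Adjacent column differences then turn the surviving block into the Hessenberg matrix $A_{i-1}(y)$. A first-row cofactor expansion, followed by differencing, shows that $\det A_r(y)$ satisfies $a_r=2a_{r-1}-(1+y)a_{r-2}$, hence equals $Q_r(-y)$.

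\textbf{Your route.} You determine the whole row vector $u_s^\top(I-yC_s)^*$ at once, as the unique solution of $w^\top(I-yC_s)=Q_{s+2}(-y)\,u_s^\top$. You then verify the guess $(Q_0,\dots,Q_{s+1})$ by a single uniform computation. It rests on the rewriting $s+1-\max(i,j-1)=(s+1-i)-(j-1-i)_+$ and the two telescoped forms of Lemma~\ref{lem-PQ-rec1}.

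\textbf{Trade-offs.} Your version is shorter and avoids all cofactor bookkeeping: no block reduction, no recurrence for minors, and no separate boundary cases. The cost is that it is a verify-the-guess argument that imports Lemma~\ref{lem-det}. The paper's computation instead derives the values and is logically independent of the determinant evaluation.

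The dependence can in fact be removed. $I-yC_s$ is invertible over $\R(y)$ simply because its determinant has constant term $1$. So your identity gives $Q_{s+2}(-y)\,w=\det(I-yC_s)\,q$. The first entry of $w$ equals $1$, by the paper's triangularization with $i=1$. Comparing first entries then reproves Lemma~\ref{lem-det} as a by-product.
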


\begin{proof}
Let $M_s:=I-yC_s$, and let $D_{s,i}$ be the matrix obtained from $M_s$ by
replacing its $i$-th row with $u_s^\top$. By the defining property of the
adjugate (Cramer's rule),
\[
u_s^\top (I-yC_s)^*e_i=\det(D_{s,i}).
\]
Thus it suffices to compute $\det(D_{s,i})$.

\medskip
\noindent\textit{Row and column operations on $D_{s,i}$.}
We simplify $D_{s,i}$ in two steps.
\begin{enumerate}[leftmargin=2em,label=(\roman*)]
\item For each row $k = i+1, \ldots, s+1$: add $(s+2-k)y$ times row~$i$
  (the all-ones row) to row~$k$, and leave the last row $k=s+2$ unchanged.
  Using \eqref{eq-Ms-explicit}, for $i+1\le k\le s+1$ and $j\le k-1$ one has
  $(M_s)_{kj}=-(s+2-k)y$, while $(M_s)_{kk}=1-(s+2-k)y$. After the row operation,
  these entries become
  \[
  -(s+2-k)y+(s+2-k)y=0\qquad (j\le k-1),
  \]
  and
  \[
  1-(s+2-k)y+(s+2-k)y=1\qquad (j=k).
  \]
  Thus, after step~(i), every row $k\ge i+1$ has zeros in columns $1,\ldots,k-1$,
  and the lower-right block indexed by rows and columns $i+1,\ldots,s+2$
  is upper triangular with diagonal entries equal to $1$ (the final row is already
  that of the identity matrix).
\item For $j = 2, \ldots, i$: subtract column~$j$ from column~$j-1$.  Because after
  step~(i) the entries in rows $i+1,\ldots,s+2$ and columns $1,\ldots,i$ are already
  zero, these column operations affect only the upper-left $({i-1})\times({i-1})$
  block and turn it into the matrix with leading subblock
  \[
  A_{i-1}(y):=
  \begin{pmatrix}
  1 & -y & -y & -y & \cdots & -y & -y\\
  -1 & 1 & -y & -y & \cdots & -y & -y\\
  0 & -1 & 1 & -y & \cdots & -y & -y\\
  0 & 0 & -1 & 1 & \cdots & -y & -y\\
  \vdots & \vdots & \vdots & \vdots & \ddots & \vdots & \vdots\\
  0 & 0 & 0 & 0 & \cdots & 1 & -y\\
  0 & 0 & 0 & 0 & \cdots & -1 & 1
  \end{pmatrix}\in\R^{(i-1)\times(i-1)},
  \]
  that is,
  \[
  (A_{i-1}(y))_{ab}=
  \begin{cases}
  1, & a=b,\\
  -1, & a=b+1,\\
  -y, & a<b,\\
  0, & a>b+1,
  \end{cases}
  \qquad 1\le a,b\le i-1.
  \]
  The lower-right block remains the identity.
\end{enumerate}
After both operations, $D_{s,i}$ is block upper triangular with
upper-left block $A_{i-1}(y)$ (of size $(i{-}1)\times(i{-}1)$) and
lower-right block $I_{s+3-i}$; hence $\det(D_{s,i}) = \det A_{i-1}(y)$.

\medskip
\noindent\textit{Evaluating $\det A_r(y)$.}
Write $a_r(y):=\det A_r(y)$ for $r\ge1$, and set $a_0(y):=1$.
Expanding the first row of $A_r(y)$ along columns $1,2,\ldots,r$:
\[
a_r(y)=a_{r-1}(y)-y\sum_{j=2}^{r}a_{j-2}(y)\qquad (r\ge2).
\]
The term $a_{r-1}(y)$ comes from the $(1,1)$ cofactor; for the $(1,j)$ cofactor
($j \ge 2$), deleting row~$1$ and column~$j$ leaves a block upper-triangular
matrix whose upper-left block is $A_{j-2}(y)$ (the first $j-2$ rows and columns
retain the staircase pattern) and whose lower-right block is the identity
(columns $j+1,\ldots,r$ are unchanged by the column subtractions). Hence the
$(1,j)$ cofactor equals $(-1)^{1+j}(-1)^{j-2}a_{j-2}(y) = -a_{j-2}(y)$, giving
the formula above.
Subtracting the corresponding relation for $a_{r-1}(y)$ yields
\[
a_r(y)=2a_{r-1}(y)-(1+y)a_{r-2}(y)\qquad (r\ge2).
\]
The initial values are $a_0(y)=1$ and $a_1(y)=1$. By
Lemma~\ref{lem-PQ-rec2}, the polynomials $Q_r(-y)$ satisfy the same recurrence
with the same initial values. Hence $a_r(y)=Q_r(-y)$ for all $r\ge0$.
Taking $r=i-1$ gives $u_s^\top(I-yC_s)^*e_i = Q_{i-1}(-y)$.
\end{proof}

\begin{lemma}\label{lem-num-Phi}
For every integer $s \ge 0$,
\begin{equation}\label{eq-num-Phi}
u_s^\top (I - y C_s)^*\, u_s = P_{s+2}(-y).
\end{equation}
\end{lemma}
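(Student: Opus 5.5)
The identity follows directly from Lemma~\ref{lem-col-adj-Q}, which gives the column sums of the adjugate. Since $u_s=\sum_{i=1}^{s+2}e_i$, linearity gives
\[
u_s^\top(I-yC_s)^*u_s=\sum_{i=1}^{s+2}u_s^\top(I-yC_s)^*e_i=\sum_{i=1}^{s+2}Q_{i-1}(-y)=\sum_{r=0}^{s+1}Q_r(-y).
\]
So it remains to prove the purely polynomial identity $\sum_{r=0}^{n-1}Q_r(-y)=P_n(-y)$ for $n\ge1$, and then take $n=s+2$.

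This identity comes from telescoping the first-order system of Lemma~\ref{lem-PQ-rec1}. Equation \eqref{eq-P-rec1} says $P_r(-y)-P_{r-1}(-y)=Q_{r-1}(-y)$ for $r\ge1$, with $P_0=0$ (the empty sum in \eqref{eq-PQ-def}). Summing over $r=1,\dots,n$ gives $P_n(-y)=\sum_{r=0}^{n-1}Q_r(-y)$.

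Two points need checking, and neither is a real obstacle. First, Lemma~\ref{lem-PQ-rec1} must be valid at $n=1$, i.e.\ $P_1=P_0+Q_0=0+1$, which holds. Second, the index convention in Lemma~\ref{lem-col-adj-Q} must be right: its $i$ runs over $1,\dots,s+2$, corresponding to states $0,\dots,s+1$, and the $i=1$ term $Q_0(-y)=1$ is consistent with the block reduction, where $A_0$ is the empty determinant. As a sanity check for $s=0$: $Q_0+Q_1=2=P_2(-y)$, which matches $\Phi_0=2/(1-y)$ together with Lemma~\ref{lem-det}.
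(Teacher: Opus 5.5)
Your proposal is correct and matches the paper's proof: it sums the column identities of Lemma~\ref{lem-col-adj-Q} to get $\sum_{i=1}^{s+2}Q_{i-1}(-y)$, then telescopes \eqref{eq-P-rec1} with $P_0(-y)=0$ to identify this sum with $P_{s+2}(-y)$. Your checks of the $n=1$ case of the recurrence and of the $s=0$ case are accurate.
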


\begin{proof}
Since $u_s=\sum_{i=1}^{s+2}e_i$, we sum the column identities from Lemma~\ref{lem-col-adj-Q}:
\[
u_s^\top (I-yC_s)^*u_s
=\sum_{i=1}^{s+2}Q_{i-1}(-y).
\]
On the other hand, the first-order recurrence \eqref{eq-P-rec1} can be written
as
\[
P_m(-y)-P_{m-1}(-y)=Q_{m-1}(-y), \qquad P_0(-y)=0.
\]
Summing from $m=1$ to $m=s+2$ gives
$P_{s+2}(-y)=\sum_{i=1}^{s+2}Q_{i-1}(-y)$,
completing the proof.
\end{proof}

We record a simple summation identity needed in the proof of
Lemma~\ref{lem-num-Psi}.

\begin{lemma}[Summation identity]\label{lem-sumPQ}
For every integer $t \ge 0$,
\begin{equation}\label{eq-sumPQ}
\sum_{i=1}^{t+2} P_i(-y) = \frac{1 - Q_{t+3}(-y)}{y}.
\end{equation}
\end{lemma}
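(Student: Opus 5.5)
The identity follows by telescoping the first-order system of Lemma~\ref{lem-PQ-rec1}. Relation \eqref{eq-Q-rec1}, read at index $n$, says
\[
y\,P_{n-1}(-y)=Q_{n-1}(-y)-Q_n(-y).
\]
Hence each $P_i(-y)$ with $i\ge 1$ can be written as $\bigl(Q_i(-y)-Q_{i+1}(-y)\bigr)/y$, a difference of consecutive $Q$'s. This holds as a polynomial identity for every $i\ge0$, because Pascal's rule (the source of \eqref{eq-Q-rec1}) is valid for all $n\ge1$.

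Summing over $i=1,\dots,t+2$ telescopes to
\[
\sum_{i=1}^{t+2}P_i(-y)=\frac{Q_1(-y)-Q_{t+3}(-y)}{y}.
\]
To finish, note that $Q_1(-y)=\binom{1}{0}=1$, which gives \eqref{eq-sumPQ}. Division by $y$ is legitimate as an identity of polynomials: the constant term of $Q_n(-y)$ is $\binom{n}{0}=1$ for every $n$, so $1-Q_{t+3}(-y)$ is divisible by $y$. Equivalently, one can clear denominators and prove $y\sum_i P_i(-y)=1-Q_{t+3}(-y)$ directly.

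The only point needing care is the boundary index. I would check that \eqref{eq-Q-rec1} is used only for $n=2,\dots,t+3$, so that $n-1\ge1$ and the conventions $P_0=0$, $Q_0=1$ never enter. As a sanity check at $t=0$: $P_1(-y)+P_2(-y)=1+2=3$, while $(1-Q_3(-y))/y=(1-(1-3y))/y=3$. I expect no real obstacle; the whole proof is this one telescoping step.
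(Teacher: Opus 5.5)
Your proposal is correct and is essentially the paper's own proof: rewrite \eqref{eq-Q-rec1} as $yP_{n-1}(-y)=Q_{n-1}(-y)-Q_n(-y)$, telescope over $n=2,\dots,t+3$, and use $Q_1(-y)=1$. Your added remark that $y$ divides $1-Q_{t+3}(-y)$, since every $Q_n(-y)$ has constant term $1$, makes explicit a point the paper leaves implicit.
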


\begin{proof}
From the first-order recurrence \eqref{eq-Q-rec1},
\[
Q_m(-y)-Q_{m-1}(-y)=-y\,P_{m-1}(-y),
\quad\text{so}\quad
P_{m-1}(-y)=\frac{Q_{m-1}(-y)-Q_m(-y)}{y}.
\]
Summing for $m=2,3,\dots,t+3$ telescopes to
\[
\sum_{i=1}^{t+2}P_i(-y) = \frac{Q_1(-y)-Q_{t+3}(-y)}{y}.
\]
Since $Q_1(-y)=1$, this is \eqref{eq-sumPQ}.
\end{proof}

\begin{lemma}\label{lem-num-Psi}
For every integer $s \ge 1$,
\begin{equation}\label{eq-num-Psi}
u_s^\top (I - y C_s)^*\, B_s\, u_s = \frac{1 - Q_{s+2}(-y)}{y}.
\end{equation}
\end{lemma}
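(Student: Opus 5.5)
We compute $B_s u_s$ explicitly and then apply Lemma~\ref{lem-col-adj-Q} column by column. Row $i$ of $B_s$ (index $i\in\{0,\dots,s+1\}$) has ones exactly in columns $j\le s-i$, so
\[
B_s u_s=\sum_{i=0}^{s+1}(s+1-i)_+\,e_{i+1}
=\sum_{k=1}^{s+1}(s+2-k)\,e_k ,
\]
and the last coordinate vanishes. By linearity and Lemma~\ref{lem-col-adj-Q},
\[
u_s^\top (I-yC_s)^* B_s u_s=\sum_{k=1}^{s+1}(s+2-k)\,Q_{k-1}(-y).
\]

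Next we turn this weighted sum into a double sum. Write the weight as $s+2-k=\#\{n: k\le n\le s+1\}$ and interchange the order of summation:
\[
\sum_{k=1}^{s+1}(s+2-k)Q_{k-1}(-y)=\sum_{n=1}^{s+1}\sum_{k=1}^{n}Q_{k-1}(-y)=\sum_{n=1}^{s+1}P_n(-y).
\]
The inner sum equals $P_n(-y)$ by the telescoping of \eqref{eq-P-rec1} already used in Lemma~\ref{lem-num-Phi}, with $P_0=0$.

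Finally, we apply Lemma~\ref{lem-sumPQ} with $t=s-1\ge0$, which is exactly why $s\ge1$ is required. It gives $\sum_{n=1}^{s+1}P_n(-y)=(1-Q_{s+2}(-y))/y$, which is the claim.

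The main point to get right is the first step. We must confirm that the entries of $B_s u_s$ are the row counts $(s+1-i)_+$ and match the 1-based indexing $e_{i+1}$ of Lemma~\ref{lem-col-adj-Q}; an off-by-one here would shift the result to $Q_{s+1}$ or $Q_{s+3}$. As a sanity check, take $s=1$: the sum is $2Q_0+Q_1=3$, and $(1-Q_3(-y))/y=(1-(1-3y))/y=3$, which agrees.
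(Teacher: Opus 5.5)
Your proof is correct and follows the paper's argument essentially step for step: expand $B_s u_s$ as $\sum_{k=1}^{s+1}(s+2-k)e_k$, apply Lemma~\ref{lem-col-adj-Q} termwise, interchange the summation to obtain $\sum_{n=1}^{s+1}P_n(-y)$, and finish with Lemma~\ref{lem-sumPQ} at $t=s-1$. The only cosmetic difference is that the paper reads off the weights $s+2-k$ as column sums of $B_s$, whereas you use the row sums, which are the actual entries of $B_s u_s$; the two agree because $B_s$ is symmetric.
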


\begin{proof}
The $j$-th column of $B_s$ (for $j=0,1,\dots,s+1$, using $0$-based indexing)
contains $1$'s in positions $0,1,\ldots,s-j$ and $0$'s below (by \eqref{eq-BbarB-def}
with $i+j \le s$), so the column sum is $s-j+1 = s+2-(j+1)$. In 1-based indexing
$j' = j+1 \in \{1,\ldots,s+2\}$, the $j'$-th column sum is $s+2-j'$. Therefore
\[
u_s^\top (I-yC_s)^*B_su_s
=\sum_{j'=1}^{s+2}(s+2-j')\,Q_{j'-1}(-y).
\]
Changing the order of summation:
\[
\sum_{j'=1}^{s+2}(s+2-j')Q_{j'-1}(-y)
=\sum_{t=1}^{s+1}\sum_{j'=1}^{t}Q_{j'-1}(-y)
=\sum_{t=1}^{s+1}P_t(-y),
\]
where the last step uses the telescoping identity
\[
P_t(-y)=\sum_{j'=1}^{t}Q_{j'-1}(-y),
\]
which follows immediately from the first-order recurrence \eqref{eq-P-rec1}.
Applying Lemma~\ref{lem-sumPQ} with $t$ replaced by $s-1$:
$\sum_{t=1}^{s+1}P_t(-y) = (1-Q_{s+2}(-y))/y$,
proving \eqref{eq-num-Psi}.
\end{proof}

\subsection{Proofs of the main theorems}\label{subsec-main-proofs}

\begin{proof}[Proof of Theorem~\ref{thm-rational}]
Substituting Lemmas~\ref{lem-det} and \ref{lem-num-Phi} into formula
\eqref{eq-Phi-resolvent} gives the first formula in \eqref{eq-Phi-main}.
Lemmas~\ref{lem-det} and \ref{lem-num-Psi} yield the second.

For \eqref{eq-F-main}, decompose $F_s(x)$ by parity.
The even-dimensional terms contribute
\[
\sum_{r\ge1}N_{2r}(s)\,x^{2r}=x^2\Psi_s(x^2),
\]
since $\Psi_s(y)=\sum_{m\ge0}N_{2m+2}(s)\,y^m$ by definition and its $m=0$
term already gives the contribution of $d=2$. The odd-dimensional terms for
$d\ge3$ contribute
\[
\sum_{r\ge1}N_{2r+1}(s)\,x^{2r+1}=x\bigl(\Phi_s(x^2)-\widetilde N_1(s)\bigr),
\]
since $\Phi_s(y)=\widetilde N_1(s)+\sum_{m\ge1}N_{2m+1}(s)\,y^m$ and
$\widetilde N_1(s)=s+2$. Adding these two contributions gives \eqref{eq-F-main}.
\end{proof}

\begin{proof}[Proof of Theorem~\ref{thm-recurrence}]
We first prove \eqref{eq-Phi-rec-main}. For $s\ge 2$, write
$\Phi_s(y)=P_{s+2}(-y)/Q_{s+2}(-y)$
and apply Lemma~\ref{lem-PQ-rec1} with $n=s+2$:
\[
\Phi_s(y)
=\frac{P_{s+1}(-y)+Q_{s+1}(-y)}{Q_{s+1}(-y)-y\,P_{s+1}(-y)}.
\]
Dividing numerator and denominator by $Q_{s+1}(-y)$ gives
\[
\Phi_s(y)=\frac{1+\Phi_{s-1}(y)}{1-y\,\Phi_{s-1}(y)}.
\]
For $s=1$, the same identity is verified directly from
\[
\Phi_0(y)=\frac{P_2(-y)}{Q_2(-y)}=\frac{2}{1-y},
\qquad
\Phi_1(y)=\frac{P_3(-y)}{Q_3(-y)}=\frac{3-y}{1-3y}.
\]
Thus \eqref{eq-Phi-rec-main} holds for all $s\ge1$, and iterating it yields the
continued-fraction expansion.

We next prove \eqref{eq-Psi-rec-main}. By Theorem~\ref{thm-rational},
$1+y\Psi_s(y)=1/Q_{s+2}(-y)$.
Using Lemma~\ref{lem-PQ-rec1}: $Q_{s+2}(-y)=Q_{s+1}(-y)-yP_{s+1}(-y)$, so
\[
1+y\Psi_s(y)
=\frac{\tfrac{1}{Q_{s+1}(-y)}}{1-y\,\tfrac{P_{s+1}(-y)}{Q_{s+1}(-y)}}
=\frac{1+y\Psi_{s-1}(y)}{1-y\Phi_{s-1}(y)}.
\]
This is \eqref{eq-Psi-rec-main}. The initial value $\Psi_0(y)=1/(1-y)$ follows from
\[
\Psi_0(y)=\frac{1-Q_2(-y)}{yQ_2(-y)}=\frac{1-(1-y)}{y(1-y)}=\frac{1}{1-y}.
\]
\end{proof}

\begin{remark}[M\"obius structure of the recurrence]\label{rem-mobius}
The map $T(f) = (1+f)/(1-yf)$ is a M\"obius transformation in the variable $f$,
with parameter $y$. Its fixed points satisfy $yf^2 = -1$, giving
$f = \pm\, i/\!\sqrt{y}$. The sequence
$\{\Phi_s(y)\}_{s \ge 0}$ is the orbit of $\Phi_0(y) = 2/(1-y)$
under iteration of $T$. The pole $y^*$ corresponds precisely to the value of
$y$ at which the orbit escapes to infinity, i.e.,
$(s+2)\arctan\sqrt{y} = \pi/2$.
\end{remark}

\begin{proof}[Proof of Theorem~\ref{thm-tangent}]
Set $x=\arctan\sqrt{y}$, so $y=\tan^2x$ and $\sqrt{y}=\tan x$.
For $\Phi_s(y)$: Theorem~\ref{thm-rational} and Lemma~\ref{lem-tan-ratio} give
\[
\Phi_s(y)
=\frac{P_{s+2}(-y)}{Q_{s+2}(-y)}
=\frac{\tan((s+2)x)}{\tan x}
=\frac{\tan((s+2)\arctan\sqrt{y})}{\sqrt{y}}.
\]
For $\Psi_s(y)$: Theorem~\ref{thm-rational} and identity \eqref{eq-QP-trig} give
\[
\Psi_s(y)
=\frac{1}{y}\!\left(\frac{1}{Q_{s+2}(-y)}-1\right)
=\frac{1}{y}\!\left(\frac{\cos^{s+2}x}{\cos((s+2)x)}-1\right).
\]
Substituting $x=\arctan\sqrt{y}$ yields \eqref{eq-Psi-trig-main}.
\end{proof}

\begin{remark}[Euler tangent numbers and asymptotic growth]\label{rem-euler}
The closed form \eqref{eq-Phi-tan-main} connects $N_{2m+1}(s)$ to the
Euler tangent numbers $T_{2k+1}$ (OEIS A000182 \cite{OEIS}).
Both $\Phi_s(y)$ and $\Psi_s(y)$ share the denominator $Q_{s+2}(-y)$, whose
dominant root is
\[
y^*:=\tan^2\!\Bigl(\frac{\pi}{2(s+2)}\Bigr).
\]
Recall the notation from Theorem~\ref{thm-tangent}:
$\alpha_s=\pi/(2(s+2))$, $y^*=\tan^2\alpha_s$, and the constants
$\kappa_s(y^*)$, $\lambda_s(y^*)$ are defined in
\eqref{eq-kappa-lambda-def-intro}.  These constants are obtained by computing
residues of $\Phi_s$ and $\Psi_s$ at $y = y^*$.
Since $Q_{s+2}(-y)$ has a simple zero at $y^*$, we differentiate the identity
\[
Q_{s+2}(-\tan^2\theta)=\frac{\cos((s+2)\theta)}{\cos^{s+2}\!\theta}
\]
from \eqref{eq-QP-trig} at $\theta=\alpha_s$. Using
$y=\tan^2\theta$ and $dy/d\theta=2\tan\theta\sec^2\theta$, we obtain
\[
\left.\frac{d}{dy}Q_{s+2}(-y)\right|_{y=y^*}
=\frac{-(s+2)/\cos^{s+2}\!\alpha_s}{2\sin\alpha_s/\cos^3\!\alpha_s}
=-\frac{s+2}{2\sin\alpha_s\cos^{s-1}\!\alpha_s}.
\]
The residue formula then yields precisely the constants
\[
\kappa_s(y^*) = \frac{2\sec^2\!\alpha_s}{(s+2)\,y^*},
\qquad
\lambda_s(y^*)
= \frac{2\cos^{s+3}\!\alpha_s}{(s+2)\sin^3\!\alpha_s},
\]
and hence the asymptotics stated in \eqref{eq-open-asymptotics-intro}.

For $s=1$, $y^*=1/3$, giving the exact formulas
$N_{2m+1}(1)=8\cdot 3^{m-1}$ ($m\ge 1$) and $N_{2m+2}(1)=3^{m+1}$
(which follow from the partial-fraction expansions of $\Phi_1$ and $\Psi_1$).
For $s=2$, $y^*=3-2\sqrt2$, giving exponential growth rate $(3+2\sqrt2)^m$.
\end{remark}

\subsection{Verification in small cases}\label{subsec-small-cases}

\begin{example}\label{ex-small-cases}
We record the first few expansions and the corresponding lattice-point counts,
and verify the pole locations predicted by Theorem~\ref{thm-tangent}.

\medskip
\noindent\textit{Case $s = 1$.}\quad
\[
\Phi_1(y) = \frac{3-y}{1-3y},
\qquad
\Psi_1(y)=\frac{3}{1-3y}.
\]
Thus
$N_1(1)=3,\; N_3(1)=8,\; N_5(1)=24,\; N_7(1)=72$,
and
$N_2(1)=3,\; N_4(1)=9,\; N_6(1)=27,\; N_8(1)=81$.
The dominant pole is $y^*=1/3=\tan^2(\pi/6)$, so both subsequences
grow geometrically with ratio $3$.

\medskip
\noindent\textit{Case $s = 2$.}\quad
\[
\Phi_2(y) = \frac{4-4y}{1-6y+y^2},
\qquad
\Psi_2(y)=\frac{6-y}{1-6y+y^2}.
\]
Thus
$N_1(2)=4,\; N_3(2)=20,\; N_5(2)=116,\; N_7(2)=676$,
and
$N_2(2)=6,\; N_4(2)=35,\; N_6(2)=204,\; N_8(2)=1189$.
The dominant pole is $y^*=3-2\sqrt2=\tan^2(\pi/8)$, giving growth rate $(3+2\sqrt2)^m$.

\medskip
\noindent\textit{Case $s = 3$.}\quad
\[
\Phi_3(y) = \frac{5-10y+y^2}{1-10y+5y^2},
\qquad
\Psi_3(y)=\frac{10-5y}{1-10y+5y^2}.
\]
Hence
$N_1(3)=5,\; N_3(3)=40,\; N_5(3)=376,\; N_7(3)=3560$,
and
$N_2(3)=10,\; N_4(3)=95,\; N_6(3)=900,\; N_8(3)=8525$.
All values agree with direct transfer-matrix computations.
\end{example}

\begin{remark}[OEIS connections]\label{rem-oeis-intro}
The polynomial families $P_n$ and $Q_n$ appear in OEIS as triangles A034867 and A034839.
The coefficients of $\Phi_1$ coincide with A080923 (up to indexing) and those
of $\Phi_2$ with A077445 \cite{OEIS}, providing a new geometric source for these sequences.
\end{remark}

\subsection{Cyclic variants}\label{subsec-cyclic}

We close Section~\ref{sec-2} by proving Theorems~\ref{thm-cyclic-intro},
\ref{thm-cyclic-recurrence}, and~\ref{thm-cyclic-trig}.
The cyclic closure is obtained by imposing the additional constraint
$x_d+x_1\le s+\delta_d$, consistently with the alternating pattern
($\delta_d=0$ for $d$ odd and $\delta_d=1$ for $d$ even).  Thus
\[
N_d^{\mathrm{cyc}}(s):=
\#\Bigl(\bigl\{x\in\cP_d^{(s)}:x_d+x_1\le s+\delta_d\bigr\}\cap\Z^d\Bigr).
\]
By Proposition~\ref{prop-transfer}, the matrix responsible for the closing step
is $B_s$ in odd dimension and $\overline B_s$ in even dimension.  Since
$C_s=B_s\overline B_s$, we obtain
\begin{equation}\label{eq-Ncyc-trace}
N_{2m}^{\mathrm{cyc}}(s)=\operatorname{Tr}(C_s^{\,m}),
\qquad
N_{2m+1}^{\mathrm{cyc}}(s)=\operatorname{Tr}(C_s^{\,m}B_s),
\qquad m\ge1.
\end{equation}
Consequently
\begin{equation}\label{eq-Psicyc-resolvent}
\Psi_s^{\mathrm{cyc}}(y)
=\operatorname{Tr}\bigl(C_s(I-yC_s)^{-1}\bigr),
\end{equation}
and
\begin{equation}\label{eq-Omegacyc-resolvent}
\Omega_s^{\mathrm{cyc}}(y)
=\operatorname{Tr}\bigl(C_s(I-yC_s)^{-1}B_s\bigr).
\end{equation}

\medskip
\noindent\textit{Even and odd cyclic series.}
The even-dimensional series follows from Jacobi's formula.  The odd-dimensional
series requires one additional adjugate trace, which is still explicit.

\begin{lemma}[Derivative identity]\label{lem-Qderiv}
For every integer $n\ge1$,
\begin{equation}\label{eq-Qderiv}
\frac{d}{dy}Q_n(-y)=-\frac{n}{2}P_{n-1}(-y).
\end{equation}
\end{lemma}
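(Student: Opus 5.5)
We prove the identity coefficientwise from the definitions in \eqref{eq-PQ-def}. Write
\[
Q_n(-y)=\sum_{k\ge0}(-1)^k\binom{n}{2k}y^k,
\qquad
P_{n-1}(-y)=\sum_{k\ge0}(-1)^k\binom{n-1}{2k+1}y^k.
\]
Differentiating term by term gives
\[
\frac{d}{dy}Q_n(-y)=\sum_{k\ge1}(-1)^k k\binom{n}{2k}y^{k-1}
=-\sum_{j\ge0}(-1)^{j}(j+1)\binom{n}{2j+2}y^{j},
\]
after the shift $k=j+1$. It therefore suffices to verify, for every $j\ge0$, the binomial identity
\[
(j+1)\binom{n}{2j+2}=\frac{n}{2}\binom{n-1}{2j+1}.
\]
This is the absorption identity $m\binom{n}{m}=n\binom{n-1}{m-1}$ with $m=2j+2$, divided by $2$. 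The summation ranges agree because both sides vanish once $2j+2>n$.

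An alternative, which I would mention as a check, uses the trigonometric form \eqref{eq-QP-trig}. Put $y=\tan^2\theta$, so that $Q_n(-y)=\cos(n\theta)\cos^{-n}\theta$. Differentiate in $\theta$ and divide by $dy/d\theta=2\tan\theta\sec^2\theta$, exactly as in Remark~\ref{rem-euler}. The derivative simplifies to
\[
-\frac{n\sin((n-1)\theta)}{2\sin\theta\cos^{n-2}\theta},
\]
and by \eqref{eq-QP-trig} this equals $-\tfrac n2P_{n-1}(-\tan^2\theta)$. Two polynomials that agree on an interval agree identically, so this route gives the same identity.

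There is no real obstacle. The only care needed is in the index shift and in the edge case $n=1$. There $Q_1=1$ and $P_0=0$, so both sides are $0$ and the statement holds.
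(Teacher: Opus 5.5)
Your proposal is correct and follows the paper's own proof: differentiate $Q_n(-y)$ termwise, apply the absorption identity $k\binom{n}{2k}=\frac n2\binom{n-1}{2k-1}$, and reindex to obtain $-\frac n2P_{n-1}(-y)$. Your trigonometric cross-check and your treatment of the edge case $n=1$ are also correct, though the paper does not need them.
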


\begin{proof}
Differentiating
$Q_n(-y)=\sum_{k=0}^{\lfloor n/2\rfloor}\binom{n}{2k}(-y)^k$
termwise gives
\[
\frac{d}{dy}Q_n(-y)
=\sum_{k=1}^{\lfloor n/2\rfloor}(-1)^k k\binom{n}{2k}y^{k-1}.
\]
Using $k\binom{n}{2k}=\frac{n}{2}\binom{n-1}{2k-1}$ and then setting
$j=k-1$ gives
\[
\frac{d}{dy}Q_n(-y)
=-\frac{n}{2}\sum_{j=0}^{\lfloor(n-2)/2\rfloor}(-1)^j
\binom{n-1}{2j+1}y^j
=-\frac{n}{2}P_{n-1}(-y).
\]
\end{proof}

We shall need a more explicit form of the numerator appearing in the odd
cyclic series.  The proof is slightly longer than the even cyclic case,
because the odd closing condition introduces a non-symmetric adjugate trace
rather than a logarithmic derivative.  We separate the computation into a few
small steps.

\begin{lemma}[A directional derivative identity]\label{lem-R-directional}
Let
\[
M_s(y):=I-yC_s .
\]
Then
\[
\mathcal R_s(y):=\operatorname{Tr}\bigl(M_s(y)^*B_s\bigr)
=
-\left.\frac{\partial}{\partial t}
\det\bigl(M_s(y)-tB_s\bigr)\right|_{t=0}.
\]
\end{lemma}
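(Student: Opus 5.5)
This is Jacobi's formula applied along the direction $-B_s$. I will use the standard identity for the derivative of a determinant: for any differentiable matrix family $X(t)$,
\[
\frac{d}{dt}\det X(t)=\operatorname{Tr}\bigl(X(t)^{*}X'(t)\bigr),
\]
where $X^*$ is the classical adjugate, as in Lemma~\ref{lem-resolvent}. We take $X(t):=M_s(y)-tB_s$, with $y$ treated as a fixed parameter (a formal variable or a real number). Then $X'(t)=-B_s$ and $X(0)=M_s(y)$. Evaluating at $t=0$ gives
\[
\left.\frac{\partial}{\partial t}\det\bigl(M_s(y)-tB_s\bigr)\right|_{t=0}
=\operatorname{Tr}\bigl(M_s(y)^*(-B_s)\bigr)=-\operatorname{Tr}\bigl(M_s(y)^*B_s\bigr),
\]
which is exactly the claimed identity for $\mathcal R_s(y)$.

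The one point to justify is Jacobi's formula in a form that does not need $M_s(y)$ to be invertible, because the identity should hold as polynomials in $y$. I will prove it directly from multilinearity of the determinant in the columns. Write $X(t)=A+tE$ with $A=M_s(y)$ and $E=-B_s$. Then $\det(A+tE)$ is a polynomial in $t$, and its coefficient of $t$ is
\[
\sum_j \det\bigl(A\text{ with column } j \text{ replaced by } Ee_j\bigr).
\]
Expanding the $j$-th of these determinants along the replaced column turns it into $\sum_i E_{ij}\,\mathrm{cof}_{ij}(A)=\sum_i E_{ij}(A^*)_{ji}$. Summing over $j$ gives $\operatorname{Tr}(A^*E)$. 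This is the same cofactor and Cramer bookkeeping already used in Lemma~\ref{lem-col-adj-Q}.

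Since every entry involved is polynomial in $y$ and $t$, the identity holds in $\Z[y,t]$ and hence for every value of $y$. I do not expect any real obstacle here. The only care needed is the sign convention, namely that the adjugate is the transpose of the cofactor matrix, so that the trace pairing comes out as $\operatorname{Tr}(A^*E)$ and not $\operatorname{Tr}(AE^*)$.
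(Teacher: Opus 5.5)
Your proposal is correct and matches the paper's proof, which likewise applies Jacobi's formula $\left.\frac{d}{dt}\det(A+tE)\right|_{t=0}=\operatorname{Tr}(A^*E)$ with $A=M_s(y)$ and $E=-B_s$. Your column-multilinearity and cofactor-expansion derivation of that formula, valid without assuming $M_s(y)$ is invertible, is a sound extra justification that the paper leaves implicit.
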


\begin{proof}
For any square matrix $A$ and any perturbation $E$, Jacobi's formula gives
\[
\left.\frac{d}{dt}\det(A+tE)\right|_{t=0}
=\operatorname{Tr}(A^*E).
\]
Taking $A=M_s(y)$ and $E=-B_s$ gives
\[
\left.\frac{\partial}{\partial t}
\det\bigl(M_s(y)-tB_s\bigr)\right|_{t=0}
=-\operatorname{Tr}\bigl(M_s(y)^*B_s\bigr),
\]
which proves the claim.
\end{proof}

\begin{lemma}[Reduction to an anti-diagonal adjugate sum]\label{lem-R-antidiagonal-reduction}
Put $W=1+y$, and let $T_{s+1}(y)$ be the $(s+1)\times(s+1)$ tridiagonal matrix
\[
T_{s+1}(y)=
\begin{pmatrix}
2 & -W & 0 & \cdots & 0\\
-1 & 2 & -W & \ddots & \vdots\\
0 & -1 & 2 & \ddots & 0\\
\vdots & \ddots & \ddots & \ddots & -W\\
0 & \cdots & 0 & -1 & 1-y
\end{pmatrix}.
\]
Let
\[
A^{(s)}_{p,q}(y):=\bigl(T_{s+1}(y)^*\bigr)_{p,q},
\qquad 1\le p,q\le s+1.
\]
Then
\begin{equation}\label{eq-R-antidiagonal-new}
\mathcal R_s(y)
=
\sum_{i=1}^{s+1} A^{(s)}_{s+2-i,i}(y)
-
\sum_{i=1}^{s} A^{(s)}_{s+1-i,i}(y).
\end{equation}
\end{lemma}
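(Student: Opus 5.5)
The plan is to conjugate the whole pencil $M_s(y)-tB_s$ by unimodular row and column operations, chosen so that $M_s(y)$ becomes block diagonal with blocks $T_{s+1}(y)$ and $1$, while $B_s$ becomes a difference of two anti-diagonals. Lemma~\ref{lem-R-directional} then turns the trace into the stated anti-diagonal sums of $T_{s+1}(y)^*$.

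\textbf{Step 1: the change of basis.} I will find unimodular matrices $L,U$ (integer entries, determinant $1$) of differencing type. The column differencing is $U$, with $(\text{col } j)\mapsto(\text{col } j)-(\text{col } j{+}1)$ for $j=1,\dots,s+1$. The row differencing is $L$, with $(\text{row } i)\mapsto(\text{row } i)-(\text{row } i{+}1)$, possibly composed with a simple permutation or reversal. The requirement is
\[
L\,M_s(y)\,U=\begin{pmatrix}T_{s+1}(y)&0\\ \ast&1\end{pmatrix}.
\]
Then for every $t$,
\[
\det\bigl(M_s(y)-tB_s\bigr)=\det\bigl(LM_s(y)U-t\,LB_sU\bigr),
\]
since $\det L=\det U=1$.

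The structural reason this should work is as follows. By \eqref{eq-Cij}, $C_s$ has the staircase form $(C_s)_{ij}=(s+1-\max(i,j-1))_+$. Its full mixed second difference vanishes away from a band, because $\max(i,j-1)$ is piecewise linear with a single kink along $j=i+1$. So $LC_sU$ is tridiagonal, with entries $0$ and $\pm1$ near the diagonal. The identity part $LU$ is also bidiagonal/tridiagonal. Adding the two should produce the diagonal $2$, the off-diagonals $-(1+y)=-W$ and $-1$, and the corner $1-y$. Step~2 of the proof of Lemma~\ref{lem-det} already exhibits the first row $(2,-(1+y),0,\dots)$, which supports this.

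\textbf{Step 2: transforming $B_s$.} Since $(B_s)_{ij}=[i+j\le s]$ with $0$-based indices, a single row difference produces the indicator of the anti-diagonal $i+j=s$. A further column difference produces the indicator of $\{i+j=s\}$ minus the indicator of the adjacent anti-diagonal. The last row and column of $B_s$ are zero, so $LB_sU$ should be supported in the top-left $(s+1)\times(s+1)$ block. After reindexing to $1$-based indices in that block, I expect
\[
LB_sU=\sum_{i=1}^{s+1}E_{i,\,s+2-i}-\sum_{i=1}^{s}E_{i,\,s+1-i}
\]
up to the orientation fixed by the choice of $L,U$.

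\textbf{Step 3: applying Lemma~\ref{lem-R-directional}.} Jacobi's formula gives
\[
\mathcal R_s(y)=\operatorname{Tr}\bigl((LM_sU)^*\,LB_sU\bigr).
\]
For a block lower-triangular matrix with diagonal blocks $T_{s+1}(y)$ and $1$, the top-left $(s+1)\times(s+1)$ block of the adjugate is $T_{s+1}(y)^*$. Because $LB_sU$ is supported in that block, only $T_{s+1}(y)^*$ enters. Moreover $\operatorname{Tr}(X^*E_{pq})=(X^*)_{qp}$, so the two anti-diagonal pieces contribute exactly $\sum_i A^{(s)}_{s+2-i,i}$ and $-\sum_i A^{(s)}_{s+1-i,i}$. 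This is \eqref{eq-R-antidiagonal-new}.

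\textbf{Main obstacle.} The bookkeeping in Step~1 is where I expect the difficulty. I must pin down exactly which row and column operations, and in which order, yield precisely $T_{s+1}(y)$, including the $1-y$ corner and the placement of the $-W$ and $-1$ off-diagonals. I must also check that the $\ast$ block does not enter the trace, and that the index reversals needed to put $LB_sU$ onto the stated anti-diagonals are consistent with those used for $M_s$. I would first fix $L$ and $U$ by testing $s=1,2$ explicitly against \eqref{eq-Ms-explicit}, and then verify the general entries using the closed form \eqref{eq-Cij}.
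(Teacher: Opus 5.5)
Your plan is the paper's proof. You difference adjacent rows and columns by unimodular matrices, identify $T_{s+1}(y)$ and the two-strip matrix $E_s$, and finish with Lemma~\ref{lem-R-directional} and $\operatorname{Tr}(X^*E_{pq})=(X^*)_{qp}$. Your predicted form of $LB_sU$ is exactly right.

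The step you flagged as the main obstacle does need one more move. With the full differencing you describe (the paper's $L_n$ and $R_n$, $n=s+2$), $LM_s(y)U$ is tridiagonal with diagonal $(2,\dots,2,1)$, superdiagonal $-W$ and subdiagonal $-1$. Its leading $(s+1)\times(s+1)$ block therefore has corner entry $2$ rather than $1-y$. It is also coupled to the last index through the entries $-W$ and $-1$. So the differencing by itself does not give the block form you require.

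The paper fixes this by taking the Schur complement of the final $1$, which turns the corner $2$ into $2-W=1-y$. Equivalently, add $W$ times the last row to row $s+1$. This yields exactly $\begin{pmatrix}T_{s+1}(y)&0\\ \ast&1\end{pmatrix}$ and leaves $LB_sU$ unchanged, because its last row is zero. The multiplier here is the polynomial $W$, so drop the requirement that $L$ have integer entries; only $\det L=\det U=1$ is used.

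Alternatively, you can keep integer operations. Since the last rows of $C_s$ and $B_s$ vanish, the last row of $M_s(y)-tB_s$ is a standard basis row. Expand along it, then difference only the leading $(s+1)\times(s+1)$ block. This gives $T_{s+1}(y)$ on the nose, with corner $1-y\,(C_s)_{s,s}=1-y$ in $0$-based indices, and the same $E_s$.
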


\begin{proof}
Let $n=s+2$.  We write matrices in $0$-based indices in this paragraph.  From
\eqref{eq-Cij} one has
\[
(C_s)_{i,j}=\bigl(s+1-\max(i,j-1)\bigr)_+,
\qquad 0\le i,j\le s+1.
\]
Let $L_n$ and $R_n$ be the unimodular difference matrices defined by
\[
(L_n)_{i,j}=\begin{cases}
1,& j=i,\\
-1,& j=i+1,\ 0\le i<n-1,\\
0,& \text{otherwise},
\end{cases}
\qquad
(R_n)_{i,j}=\begin{cases}
1,& i=j,\\
-1,& i=j+1,\ 0\le j<n-1,\\
0,& \text{otherwise}.
\end{cases}
\]
Thus left multiplication by $L_n$ takes adjacent row differences, while right
multiplication by $R_n$ takes adjacent column differences.  Both have
determinant $1$.

A direct substitution of the displayed formula for $(C_s)_{i,j}$ gives
\[
L_nM_s(y)R_n=
\begin{pmatrix}
2 & -W & 0 & \cdots & 0 & 0\\
-1 & 2 & -W & \ddots & \vdots & \vdots\\
0 & -1 & 2 & \ddots & 0 & 0\\
\vdots & \ddots & \ddots & \ddots & -W & 0\\
0 & \cdots & 0 & -1 & 2 & -W\\
0 & \cdots & 0 & 0 & -1 & 1
\end{pmatrix}.
\]
Taking the Schur complement of the final $1$ changes the preceding diagonal
entry from $2$ to $2-W=1-y$ and gives exactly $T_{s+1}(y)$.

We now track the perturbation.  Since $(B_s)_{i,j}=1$ if and only if
$i+j\le s$, and $0$ otherwise, the same row and column differences give
\[
L_nB_sR_n=
\begin{pmatrix}
E_s & 0\\
0 & 0
\end{pmatrix},
\]
where $E_s$ is the $(s+1)\times(s+1)$ matrix
\[
(E_s)_{p,q}=\begin{cases}
1,& p+q=s+2,\\
-1,& p+q=s+1,\\
0,& \text{otherwise},
\end{cases}
\qquad 1\le p,q\le s+1.
\]
Therefore
\[
\det\bigl(M_s(y)-tB_s\bigr)
=\det\bigl(T_{s+1}(y)-tE_s\bigr).
\]
By Lemma~\ref{lem-R-directional},
\[
\mathcal R_s(y)=\operatorname{Tr}\bigl(T_{s+1}(y)^*E_s\bigr).
\]
Since $E_s$ is supported on the two anti-diagonal strips $p+q=s+2$ and
$p+q=s+1$, with signs $+1$ and $-1$, respectively, this trace is precisely
\eqref{eq-R-antidiagonal-new}.
\end{proof}

\begin{lemma}[Adjugate entries of the reduced tridiagonal matrix]\label{lem-tridiagonal-adjugate}
Let
\[
\widehat P_n:=P_n(-y),\qquad \widehat Q_n:=Q_n(-y).
\]
For $1\le p,q\le s+1$, one has
\begin{equation}\label{eq-Apq-new}
A^{(s)}_{p,q}(y)
= W^{(q-p)_+}\,
\widehat P_{\min(p,q)}\,
\widehat Q_{s+2-\max(p,q)},
\end{equation}
where $(q-p)_+:=\max(q-p,0)$.
\end{lemma}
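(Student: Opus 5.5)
The plan is the standard tridiagonal-adjugate computation: express every adjugate entry through leading and trailing principal minors, and identify those minors with $\widehat P_n$ and $\widehat Q_n$ by matching recurrences.

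For $T=T_{s+1}(y)$ let $\theta_k$ be the determinant of the leading $k\times k$ principal submatrix (rows and columns $1,\dots,k$), with $\theta_0=1$. Let $\phi_k$ be the determinant of the trailing principal submatrix on rows and columns $k,\dots,s+1$, with $\phi_{s+2}=1$. For a tridiagonal matrix with diagonal $d_i$, superdiagonal $b_i$ and subdiagonal $c_i$, the classical inverse/adjugate formula gives
\[
(T^*)_{p,q}=\begin{cases}(-1)^{p+q}\,b_p\cdots b_{q-1}\,\theta_{p-1}\,\phi_{q+1},&p\le q,\\ (-1)^{p+q}\,c_q\cdots c_{p-1}\,\theta_{q-1}\,\phi_{p+1},&p>q.\end{cases}
\]
Here every $b_i=-W$ and every $c_i=-1$. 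The sign $(-1)^{p+q}$ cancels against $(-1)^{|q-p|}$ coming from the off-diagonal product. This leaves $W^{(q-p)_+}\,\theta_{\min(p,q)-1}\,\phi_{\max(p,q)+1}$, which already accounts for the power of $W$ in \eqref{eq-Apq-new}. I would either quote this formula or verify it quickly by expanding the minor obtained by deleting row $q$ and column $p$: that minor is block triangular, with blocks the leading minor, a triangular strip of off-diagonal entries, and the trailing minor.

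The remaining task is to identify the two families of minors. For the leading minors, $\theta_k=\widehat P_{k+1}$. Expanding along the last row of a leading block whose diagonal entries are all $2$ gives $\theta_k=2\theta_{k-1}-W\theta_{k-2}$, which is exactly the recurrence \eqref{eq-P-rec2} with $1+y=W$. The initial values match: $\theta_0=1=\widehat P_1$ and $\theta_1=2=\widehat P_2$. One caveat: when $k=s+1$ the last diagonal entry is $1-y$, but $\theta_{s+1}$ never occurs, since $\min(p,q)-1\le s$.

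For the trailing minors, $\phi_k=\widehat Q_{s+2-k}$. I would expand from the top row of the trailing block, so that $\phi_k=2\phi_{k+1}-W\phi_{k+2}$ for $k\le s$. The initial values are $\phi_{s+2}=1=\widehat Q_0$ and $\phi_{s+1}=1-y=\widehat Q_1-y\widehat P_0$. This needs care: $Q_1(-y)=1$, so $\phi_{s+1}=1-y$ is not $\widehat Q_1$. Then $\phi_s=2(1-y)-W=1-3y=\widehat Q_3$, which suggests $\phi_k=\widehat Q_{s+3-k}$ with the shifted start $\phi_{s+2}=1=\widehat Q_1$ and $\phi_{s+1}=1-y=\widehat Q_2$. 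Under \eqref{eq-Q-rec2} this is consistent.

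I then recheck indices against the claimed formula. With $\phi_{q+1}=\widehat Q_{s+2-q}$, the product becomes $\widehat P_{\min}\,\widehat Q_{s+2-\max}$, exactly \eqref{eq-Apq-new}. A sanity check at $p=q=s+1$ gives $\theta_s\phi_{s+2}=\widehat P_{s+1}\widehat Q_1$, as required.

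The main obstacle is this bookkeeping of boundary indices. The last diagonal entry $1-y$ breaks the uniformity of the tridiagonal pattern, so the trailing recurrence must be seeded correctly; the shift $\widehat Q_1,\widehat Q_2$ absorbs it. The sign and $W$-power bookkeeping in the adjugate formula also needs checking. I would confirm everything on $s=1$, the $2\times2$ case $T=\begin{pmatrix}2&-W\\-1&1-y\end{pmatrix}$. Its adjugate is $\begin{pmatrix}1-y&W\\1&2\end{pmatrix}$. The formula predicts $A_{1,1}=\widehat P_1\widehat Q_2=1-y$, $A_{1,2}=W\widehat P_1\widehat Q_1=W$, $A_{2,1}=\widehat P_1\widehat Q_1=1$, and $A_{2,2}=\widehat P_2\widehat Q_1=2$, all of which agree.
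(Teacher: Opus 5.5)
Your proposal is correct and follows the paper's proof essentially step for step. You use the standard tridiagonal adjugate formula with leading minors $\theta_k=\widehat P_{k+1}$ for $k\le s$, and trailing minors $\phi_k=\widehat Q_{s+3-k}$ seeded by $\phi_{s+2}=\widehat Q_1$ and $\phi_{s+1}=1-y=\widehat Q_2$; this is exactly the paper's indexing. In a final write-up, state the trailing identification directly in this shifted form and drop the initial guess $\phi_k=\widehat Q_{s+2-k}$, which already fails at $k=s+1$.
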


\begin{proof}
Let $\theta_j$ denote the determinant of the leading $j\times j$ principal
minor of $T_{s+1}(y)$, with $\theta_0=1$.  For $j\ge1$, these determinants
satisfy
\[
\theta_j=2\theta_{j-1}-W\theta_{j-2},
\qquad \theta_0=1,\quad \theta_1=2,
\]
as long as the last row of $T_{s+1}(y)$ is not involved.  Hence
\[
\theta_j=\widehat P_{j+1}\qquad (0\le j\le s).
\]
Similarly, let $\phi_j$ be the determinant of the trailing principal minor
using rows and columns $j,j+1,\ldots,s+1$, and set $\phi_{s+2}=1$.  Since
$\phi_{s+1}=1-y=\widehat Q_2$ and the same continuant recurrence runs upward,
we have
\[
\phi_j=\widehat Q_{s+3-j}\qquad (1\le j\le s+2).
\]

The standard tridiagonal adjugate formula now gives the result.  Indeed, if
$p\le q$, then the product of the superdiagonal entries from $p$ to $q-1$ is
$(-W)^{q-p}$, whose sign is cancelled by the cofactor sign, and therefore
\[
A^{(s)}_{p,q}(y)=W^{q-p}\theta_{p-1}\phi_{q+1}
=W^{q-p}\widehat P_p\widehat Q_{s+2-q}.
\]
If $p>q$, the product of the subdiagonal entries from $q$ to $p-1$ is
$(-1)^{p-q}$, again cancelled by the cofactor sign, giving
\[
A^{(s)}_{p,q}(y)=\theta_{q-1}\phi_{p+1}
=\widehat P_q\widehat Q_{s+2-p}.
\]
These two cases are exactly \eqref{eq-Apq-new}.
\end{proof}

\begin{lemma}[Two product identities]\label{lem-PQ-product-identities}
For every $r\ge1$,
\begin{equation}\label{eq-PQ-products-new}
2\widehat P_r\widehat Q_r=\widehat P_{2r},
\qquad
2\widehat P_r\widehat Q_{r+1}=\widehat P_{2r+1}-W^r.
\end{equation}
\end{lemma}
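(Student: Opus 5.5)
The cleanest route is through the generating identity $(1+\sqrt{u})^n=Q_n(u)+\sqrt{u}\,P_n(u)$. We specialize to $u=-y$, so that $\sqrt{u}=i\sqrt y$, and write $\omega:=i\sqrt y$. Then
\[
(1+\omega)^n=\widehat Q_n+\omega\widehat P_n,\qquad (1-\omega)^n=\widehat Q_n-\omega\widehat P_n .
\]
Subtracting gives $\widehat P_n=\bigl((1+\omega)^n-(1-\omega)^n\bigr)/(2\omega)$, and adding gives $\widehat Q_n=\bigl((1+\omega)^n+(1-\omega)^n\bigr)/2$. The key observation is that $(1+\omega)(1-\omega)=1-\omega^2=1+y=W$. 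Both claimed identities then reduce to polynomial identities in $\omega$, and since both sides are polynomials in $y$, it suffices to verify them for, say, $y>0$, or formally in $\mathbb{Q}[\omega]$.

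For the first identity, set $a=(1+\omega)^r$ and $b=(1-\omega)^r$. Then
\[
2\widehat P_r\widehat Q_r=2\cdot\frac{a-b}{2\omega}\cdot\frac{a+b}{2}=\frac{a^2-b^2}{2\omega}=\widehat P_{2r}.
\]
This is the formal analogue of $\sin 2\phi=2\sin\phi\cos\phi$.

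For the second identity, set $a'=(1+\omega)^{r+1}$ and $b'=(1-\omega)^{r+1}$. Then
\[
2\widehat P_r\widehat Q_{r+1}=\frac{(a-b)(a'+b')}{2\omega}=\frac{aa'-bb'+ab'-ba'}{2\omega}.
\]
Here $aa'-bb'=(1+\omega)^{2r+1}-(1-\omega)^{2r+1}=2\omega\widehat P_{2r+1}$. For the cross terms,
\[
ab'-ba'=W^r\bigl((1-\omega)-(1+\omega)\bigr)=-2\omega W^r .
\]
Dividing by $2\omega$ gives $\widehat P_{2r+1}-W^r$, as claimed.

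The only point needing care is the division by $\omega$. To handle it, I would regard everything in $\mathbb{Q}[\omega]$ and note that the numerators are odd in $\omega$, so the quotients are genuine polynomials in $\omega^2=-y$. Alternatively, one can prove the identities for real $y>0$ (where $\omega\neq0$) and extend by polynomial identity.

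A trigonometric check via \eqref{eq-QP-trig} is also available as an alternative. With $y=-\tan^2\theta$, $W=\sec^2\theta$, the second identity becomes $2\sin(r\theta)\cos((r+1)\theta)=\sin((2r+1)\theta)-\sin\theta$, which is the product-to-sum formula. I would mention this only as a consistency remark. The main (minor) obstacle is bookkeeping the cross terms so that the $W^r$ emerges with the correct sign.
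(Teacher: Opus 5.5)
Your proof is correct. It starts from the same closed form $(1+\sqrt{-y})^n=\widehat Q_n+\sqrt{-y}\,\widehat P_n$ as the paper, but the argument is organized differently.

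You pass to Binet-type formulas using both conjugates $(1\pm\omega)^n$ and expand the products directly. The $W^r$ term then appears as the cross term $(1+\omega)^r(1-\omega)^{r+1}-(1-\omega)^r(1+\omega)^{r+1}=-2\omega W^r$.

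The paper instead compares odd parts of $(1+\sqrt{-y})^{a+b}=(1+\sqrt{-y})^a(1+\sqrt{-y})^b$. This gives the addition formula $\widehat P_{a+b}=\widehat P_a\widehat Q_b+\widehat Q_a\widehat P_b$, and the first identity is the case $a=b=r$. For the second identity it takes $(a,b)=(r,r+1)$ and combines it with the Casoratian identity $\widehat P_{r+1}\widehat Q_r-\widehat P_r\widehat Q_{r+1}=W^r$. That identity comes from the first-order system of Lemma~\ref{lem-PQ-rec1} or from the closed form.

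The paper's route never divides by $\sqrt{-y}$, since it only compares coefficients in the basis $\{1,\sqrt{-y}\}$. It also yields the addition formula and the Casoratian identity as reusable by-products.

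Your route is a single self-contained computation that makes the origin and sign of $W^r$ transparent. Its cost is the division by $\omega$, which you justify correctly, either by oddness in $\omega$ or by cancellation in the integral domain $\mathbb{Q}[\omega]$.

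One slip in your closing consistency remark: to apply \eqref{eq-QP-trig} to $\widehat P_n=P_n(-y)$ you need $y=\tan^2\theta$, not $y=-\tan^2\theta$. With that correction $W=\sec^2\theta$, and your product-to-sum check $2\sin(r\theta)\cos((r+1)\theta)=\sin((2r+1)\theta)-\sin\theta$ goes through as stated.
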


\begin{proof}
From
\[
(1+\sqrt{-y})^{a+b}=(1+\sqrt{-y})^a(1+\sqrt{-y})^b
\]
and comparison of the odd parts, we get
\[
\widehat P_{a+b}=\widehat P_a\widehat Q_b+\widehat Q_a\widehat P_b.
\]
Taking $a=b=r$ gives $2\widehat P_r\widehat Q_r=\widehat P_{2r}$.

For the second identity, first note that
\[
\widehat P_{r+1}\widehat Q_r-\widehat P_r\widehat Q_{r+1}=W^r.
\]
This follows either by induction from the first-order system in
Lemma~\ref{lem-PQ-rec1}, or directly from the closed form
$\widehat Q_n+\sqrt{-y}\,\widehat P_n=(1+\sqrt{-y})^n$.
Combining this with
\[
\widehat P_{2r+1}
=\widehat P_r\widehat Q_{r+1}+\widehat Q_r\widehat P_{r+1}
\]
yields
\[
\widehat P_{2r+1}=2\widehat P_r\widehat Q_{r+1}+W^r,
\]
which proves the second identity.
\end{proof}

\begin{lemma}[Adjugate trace for the odd cyclic series]\label{lem-Rs-explicit}
Let
\[
\mathcal{R}_s(y):=\operatorname{Tr}\bigl((I-yC_s)^*B_s\bigr).
\]
Put $W=1+y$ and $\widehat P_n=P_n(-y)$.  If $s=2\ell$, then
\begin{align}\label{eq-R-even}
\mathcal{R}_{2\ell}(y)
={}&\frac12\sum_{r=1}^{\ell}\bigl(1+W^{2\ell+2-2r}\bigr)\widehat P_{2r}
+\frac12\widehat P_{2\ell+2}\notag\\
&-\frac12\sum_{r=1}^{\ell}\bigl(1+W^{2\ell+1-2r}\bigr)
\bigl(\widehat P_{2r+1}-W^r\bigr).
\end{align}
If $s=2\ell+1$, then
\begin{align}\label{eq-R-odd}
\mathcal{R}_{2\ell+1}(y)
={}&\frac12\sum_{r=1}^{\ell+1}\bigl(1+W^{2\ell+3-2r}\bigr)\widehat P_{2r}\notag\\
&-\frac12\sum_{r=1}^{\ell}\bigl(1+W^{2\ell+2-2r}\bigr)
\bigl(\widehat P_{2r+1}-W^r\bigr)
-\frac12\bigl(\widehat P_{2\ell+3}-W^{\ell+1}\bigr).
\end{align}
\end{lemma}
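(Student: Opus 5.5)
The plan is to start from the anti-diagonal reduction in Lemma~\ref{lem-R-antidiagonal-reduction}. There, $\mathcal R_s(y)$ equals the sum of the adjugate entries $A^{(s)}_{p,q}$ over the strip $p+q=s+2$ minus the sum over the strip $p+q=s+1$. I will insert the closed form \eqref{eq-Apq-new} from Lemma~\ref{lem-tridiagonal-adjugate} for each entry. Then I will collapse each product $\widehat P_a\widehat Q_b$ with the two product identities of Lemma~\ref{lem-PQ-product-identities}. The whole proof is bookkeeping over the two anti-diagonals: split each according to whether $q\ge p$ or $q<p$, and pair the entry at $(p,q)$ with its mirror entry at $(q,p)$.

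\emph{First strip, $p+q=s+2$.} Write $p=s+2-i$ and $q=i$, with $1\le i\le s+1$.
\begin{itemize}
\item If $i<(s+2)/2$, then $\min(p,q)=i$ and $\max(p,q)=s+2-i$, so the entry is $\widehat P_i\widehat Q_i=\tfrac12\widehat P_{2i}$.
\item If $i>(s+2)/2$, set $r=s+2-i$. The entry is $W^{2i-s-2}\widehat P_r\widehat Q_r=\tfrac12W^{s+2-2r}\widehat P_{2r}$.
\end{itemize}
Grouping by $r=\min(p,q)$ gives $\tfrac12(1+W^{s+2-2r})\widehat P_{2r}$ for $1\le r<(s+2)/2$. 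When $s=2\ell$ there is also the self-paired middle entry $i=\ell+1$, which contributes $\tfrac12\widehat P_{2\ell+2}$. This reproduces the first line of \eqref{eq-R-even}. For $s=2\ell+1$ the range is $r\le\ell+1$ and there is no middle term, which reproduces the first sum of \eqref{eq-R-odd}.

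\emph{Second strip, $p+q=s+1$.} Write $p=s+1-i$ and $q=i$, with $1\le i\le s$, and put $r=\min(p,q)$. In both cases the $\widehat Q$-index is $s+2-\max(p,q)=r+1$.
\begin{itemize}
\item If $q<p$, the entry is $\widehat P_r\widehat Q_{r+1}=\tfrac12(\widehat P_{2r+1}-W^r)$.
\item If $q>p$, the entry picks up the extra factor $W^{q-p}=W^{s+1-2r}$.
\end{itemize}
Pairing again gives $\tfrac12(1+W^{s+1-2r})(\widehat P_{2r+1}-W^r)$ for $1\le r<(s+1)/2$. This yields $r\le\ell$ in both parities. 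When $s=2\ell+1$ there is the extra unpaired middle entry $r=\ell+1$, which contributes $\tfrac12(\widehat P_{2\ell+3}-W^{\ell+1})$. Subtracting this strip gives exactly the remaining terms of \eqref{eq-R-even} and \eqref{eq-R-odd}.

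The main obstacle is purely index bookkeeping. Three things need care:
\begin{itemize}
\item the correct $\widehat Q$-subscript $s+2-\max(p,q)$ in each case;
\item the ranges of $r$ and the existence of a diagonal middle entry, which depends on the parity of $s$ and on which strip is being summed;
\item applying the product identity of Lemma~\ref{lem-PQ-product-identities} to the correct pair of indices, $(r,r)$ versus $(r,r+1)$.
\end{itemize}
I would check the final formula against a small case, for example $s=1$, where $\mathcal R_1$ can be computed directly from $C_1$ and $B_1$ via Lemma~\ref{lem-R-directional}.
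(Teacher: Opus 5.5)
Your proposal is correct and follows the paper's proof essentially step for step. You split $\mathcal R_s$ into the two anti-diagonal strip sums of Lemma~\ref{lem-R-antidiagonal-reduction}, insert \eqref{eq-Apq-new}, pair $(p,q)$ with $(q,p)$ grouped by $r=\min(p,q)$, collapse the products with Lemma~\ref{lem-PQ-product-identities}, and account for the central entry $p=q=\ell+1$, which lies on the first strip when $s=2\ell$ and on the second strip when $s=2\ell+1$. Your bookkeeping also checks out: the $\widehat Q$-subscripts $r$ and $r+1$, the exponents $W^{s+2-2r}$ and $W^{s+1-2r}$, and the ranges $r\le\ell$ or $r\le\ell+1$ are all as they should be.
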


\begin{proof}
By Lemma~\ref{lem-R-antidiagonal-reduction},
\[
\mathcal R_s(y)=\mathcal S^{(1)}_s(y)-\mathcal S^{(2)}_s(y),
\]
where
\[
\mathcal S^{(1)}_s(y):=\sum_{p+q=s+2}A^{(s)}_{p,q}(y),
\qquad
\mathcal S^{(2)}_s(y):=\sum_{p+q=s+1}A^{(s)}_{p,q}(y).
\]

We first evaluate $\mathcal S^{(1)}_s(y)$.  On the strip $p+q=s+2$, if
$r=\min(p,q)$, then Lemma~\ref{lem-tridiagonal-adjugate} gives
\[
A^{(s)}_{p,q}(y)=W^{(q-p)_+}\widehat P_r\widehat Q_r.
\]
Pairing the symmetric terms $(p,q)$ and $(q,p)$ yields
\[
\bigl(1+W^{s+2-2r}\bigr)\widehat P_r\widehat Q_r.
\]
Hence, by Lemma~\ref{lem-PQ-product-identities},
\[
\mathcal S^{(1)}_{2\ell}(y)
=\frac12\sum_{r=1}^{\ell}
\bigl(1+W^{2\ell+2-2r}\bigr)\widehat P_{2r}
+\frac12\widehat P_{2\ell+2},
\]
because the central term $p=q=\ell+1$ contributes
$\widehat P_{\ell+1}\widehat Q_{\ell+1}=\widehat P_{2\ell+2}/2$.  Similarly,
when $s=2\ell+1$, there is no central term on this strip, and therefore
\[
\mathcal S^{(1)}_{2\ell+1}(y)
=\frac12\sum_{r=1}^{\ell+1}
\bigl(1+W^{2\ell+3-2r}\bigr)\widehat P_{2r}.
\]

We next evaluate $\mathcal S^{(2)}_s(y)$.  On the strip $p+q=s+1$, if
$r=\min(p,q)$, then Lemma~\ref{lem-tridiagonal-adjugate} gives
\[
A^{(s)}_{p,q}(y)=W^{(q-p)_+}\widehat P_r\widehat Q_{r+1}.
\]
Pairing symmetric terms and using
\[
2\widehat P_r\widehat Q_{r+1}=\widehat P_{2r+1}-W^r
\]
from Lemma~\ref{lem-PQ-product-identities}, we obtain
\[
\mathcal S^{(2)}_{2\ell}(y)
=\frac12\sum_{r=1}^{\ell}
\bigl(1+W^{2\ell+1-2r}\bigr)
\bigl(\widehat P_{2r+1}-W^r\bigr).
\]
For $s=2\ell+1$, the strip $p+q=s+1=2\ell+2$ has the central term
$p=q=\ell+1$.  Thus
\[
\mathcal S^{(2)}_{2\ell+1}(y)
=\frac12\sum_{r=1}^{\ell}
\bigl(1+W^{2\ell+2-2r}\bigr)
\bigl(\widehat P_{2r+1}-W^r\bigr)
+\frac12\bigl(\widehat P_{2\ell+3}-W^{\ell+1}\bigr).
\]
Substituting these four expressions into
$\mathcal R_s(y)=\mathcal S^{(1)}_s(y)-\mathcal S^{(2)}_s(y)$ gives
\eqref{eq-R-even} and \eqref{eq-R-odd}.
\end{proof}

\begin{remark}\label{rem-Rs-small-check}
For instance, when $s=2$, Lemma~\ref{lem-Rs-explicit} gives
\[
\mathcal R_2(y)=2+y+2y^2,
\qquad
\Omega_2^{\mathrm{cyc}}(y)=\frac{13}{1-6y+y^2},
\]
in agreement with a direct trace computation from $C_2$ and $B_2$.
\end{remark}

\begin{proof}[Proof of Theorem~\ref{thm-cyclic-intro}]
For the even-dimensional series, Jacobi's formula gives
\[
\frac{d}{dy}\det(I-yC_s)
=-\det(I-yC_s)\operatorname{Tr}\bigl(C_s(I-yC_s)^{-1}\bigr).
\]
Together with Lemma~\ref{lem-det}, this yields
\[
\Psi_s^{\mathrm{cyc}}(y)
=-\frac{\frac{d}{dy}Q_{s+2}(-y)}{Q_{s+2}(-y)}.
\]
Lemma~\ref{lem-Qderiv} with $n=s+2$ proves \eqref{eq-Psicyc-intro}.

For the odd-dimensional series, use
\[
C_s(I-yC_s)^{-1}=\frac{(I-yC_s)^{-1}-I}{y}.
\]
Taking traces after multiplying by $B_s$ gives
\[
\Omega_s^{\mathrm{cyc}}(y)
=\frac{1}{y}\left(
\operatorname{Tr}\bigl((I-yC_s)^{-1}B_s\bigr)-\operatorname{Tr}(B_s)
\right).
\]
Since $\operatorname{Tr}(B_s)=\lfloor s/2\rfloor+1$ and
$(I-yC_s)^{-1}=M_s^*/Q_{s+2}(-y)$, Lemma~\ref{lem-Rs-explicit} proves
\eqref{eq-Omegacyc-intro}.

It remains to record the asymptotics.  Let
$\alpha_s=\pi/(2(s+2))$ and $y_s^*=\tan^2\alpha_s$.  The denominator
$Q_{s+2}(-y)$ has a simple dominant zero at $y_s^*$.  As in
Remark~\ref{rem-euler},
\begin{equation}\label{eq-Qprime-cyc}
\left.\frac{d}{dy}Q_{s+2}(-y)\right|_{y=y_s^*}
=-\frac{s+2}{2\sin\alpha_s\cos^{s-1}\alpha_s}.
\end{equation}
For the even-dimensional series, the numerator in \eqref{eq-Psicyc-intro}
evaluates at $y_s^*$ to the negative of \eqref{eq-Qprime-cyc}.  Therefore
the residue computation gives the leading constant $1$, namely
$N_{2m}^{\mathrm{cyc}}(s)\sim (y_s^*)^{-m}$.
For the odd-dimensional series, the numerator of \eqref{eq-Omegacyc-intro}
at $y_s^*$ is $\mathcal{R}_s(y_s^*)/y_s^*$, because $Q_{s+2}(-y_s^*)=0$.  Extracting the
coefficient of $y^{m-1}$ and using \eqref{eq-Qprime-cyc} gives
\[
N_{2m+1}^{\mathrm{cyc}}(s)
\sim -\frac{\mathcal{R}_s(y_s^*)}{y_s^*\left.\frac{d}{dy}Q_{s+2}(-y)\right|_{y=y_s^*}}
(y_s^*)^{-m}
=\xi_s(y_s^*)^{-m},
\]
where $\xi_s$ is the constant stated in Theorem~\ref{thm-cyclic-intro}.
\end{proof}

\medskip
\noindent\textit{Recurrences and trigonometric forms.}

\begin{proof}[Proof of Theorem~\ref{thm-cyclic-recurrence}]
By Theorem~\ref{thm-cyclic-intro},
\[
\widehat\Psi_s^{\mathrm{cyc}}(y)=\frac{P_{s+1}(-y)}{Q_{s+2}(-y)}\qquad(s\ge1),
\]
and the auxiliary initial value is
$\widehat\Psi_0^{\mathrm{cyc}}(y)=P_1(-y)/Q_2(-y)=1/(1-y)$.  Assume $s\ge1$ and write
$\widehat\Psi_{s-1}^{\mathrm{cyc}}(y)=P_s(-y)/Q_{s+1}(-y)$.  From the first-order system
\eqref{eq-P-rec1}--\eqref{eq-Q-rec1},
\[
P_{s+1}(-y)=P_s(-y)+Q_s(-y),\qquad
Q_{s+1}(-y)=Q_s(-y)-yP_s(-y).
\]
Thus $Q_s(-y)=Q_{s+1}(-y)+yP_s(-y)$, and hence
\[
P_{s+1}(-y)=Q_{s+1}(-y)+(1+y)P_s(-y)
=Q_{s+1}(-y)\bigl(1+(1+y)\widehat\Psi_{s-1}^{\mathrm{cyc}}(y)\bigr).
\]
Similarly,
\[
Q_{s+2}(-y)=Q_{s+1}(-y)-yP_{s+1}(-y)
=Q_{s+1}(-y)\bigl(1-y-y(1+y)\widehat\Psi_{s-1}^{\mathrm{cyc}}(y)\bigr).
\]
Dividing the two identities gives \eqref{eq-Lcyc-rec-main}.  Since
$\Psi_s^{\mathrm{cyc}}(y)=\frac{s+2}{2}\widehat\Psi_s^{\mathrm{cyc}}(y)$ and
$\widehat\Psi_{s-1}^{\mathrm{cyc}}(y)=\frac{2}{s+1}\Psi_{s-1}^{\mathrm{cyc}}(y)$,
substitution into \eqref{eq-Lcyc-rec-main} gives the equivalent recurrence
\eqref{eq-Psicyc-rec-main} for the unnormalized series.

For the coefficient recurrence, write a cyclic generating function in the form
\[
\sum_{m\ge1}a_m y^{m-1}=\frac{A(y)}{Q_{s+2}(-y)},
\qquad
Q_{s+2}(-y)=\sum_{j=0}^{\nu_s}(-1)^j\binom{s+2}{2j}y^j,
\quad \nu_s=\left\lfloor\frac{s+2}{2}\right\rfloor.
\]
Multiplying by the denominator and comparing coefficients of $y^{m-1}$ for
$m-1>\deg A$ gives \eqref{eq-cyclic-coeff-rec-main}.  For the even cyclic
series the numerator has degree at most $\nu_s-1$, so the recurrence starts at
$m\ge \nu_s+1$.  For the odd cyclic series, the numerator is the one appearing
in Theorem~\ref{thm-cyclic-intro}; hence the same recurrence holds after the
finite set of initial terms determined by that numerator.
\end{proof}

\begin{proof}[Proof of Theorem~\ref{thm-cyclic-trig}]
Set $\theta=\arctan\sqrt y$, so that $y=\tan^2\theta$.  By
\eqref{eq-QP-trig},
\[
P_n(-y)=\frac{\sin(n\theta)}{\sin\theta\cos^{n-1}\theta},
\qquad
Q_n(-y)=\frac{\cos(n\theta)}{\cos^n\theta}.
\]
Substituting these formulas into
\eqref{eq-Psicyc-intro} immediately yields \eqref{eq-Psicyc-trig-main}.
For the odd series, substitute
\[
Q_{s+2}(-\tan^2\theta)=\frac{\cos((s+2)\theta)}{\cos^{s+2}\theta}
\]
into \eqref{eq-Omegacyc-intro}.  After multiplying numerator and denominator by
$\cos^{s+2}\theta$, we obtain \eqref{eq-Omegacyc-trig-main}.
\end{proof}

\begin{example}[Cyclic generating functions in small cases]\label{ex-cyclic}
\textit{Case $s=1$.}\;
\[
\Psi_1^{\mathrm{cyc}}(y)=\frac{3}{1-3y},
\qquad
\Omega_1^{\mathrm{cyc}}(y)=\frac{5}{1-3y}.
\]
Thus $N_{2m}^{\mathrm{cyc}}(1)=3^m$ and
$N_{2m+1}^{\mathrm{cyc}}(1)=5\cdot3^{m-1}$ for $m\ge1$.

\medskip\noindent\textit{Case $s=2$.}\;
\[
\Psi_2^{\mathrm{cyc}}(y)=\frac{6-2y}{1-6y+y^2},
\qquad
\Omega_2^{\mathrm{cyc}}(y)=\frac{13}{1-6y+y^2}.
\]
Thus $N_2^{\mathrm{cyc}}(2)=6$, $N_4^{\mathrm{cyc}}(2)=34$,
$N_6^{\mathrm{cyc}}(2)=198$, while
$N_3^{\mathrm{cyc}}(2)=13$, $N_5^{\mathrm{cyc}}(2)=78$.

\medskip\noindent\textit{Case $s=3$.}\;
\[
\Psi_3^{\mathrm{cyc}}(y)=\frac{10-10y}{1-10y+5y^2},
\qquad
\Omega_3^{\mathrm{cyc}}(y)=\frac{27-8y+2y^2}{1-10y+5y^2}.
\]
In particular $N_3^{\mathrm{cyc}}(3)=27$, $N_5^{\mathrm{cyc}}(3)=262$, and
$N_7^{\mathrm{cyc}}(3)=2487$.
\end{example}

\section{Ehrhart series and \texorpdfstring{$h^*$}{h*}-vectors}\label{sec-3}

The argument in this section proceeds in four stages, paralleling the
structure of Section~\ref{sec-2}. First (\S\ref{subsec-ehrhart-setup}), we
set up the Ehrhart-theoretic framework: we record the standard inversion
formula and prove the general $h^*$-extraction theorem
(Theorem~\ref{thm-hstar-extraction}). Second
(\S\ref{subsec-fixedm-transfer}), we introduce the fixed-dilation transfer
matrices and reduce the resolvent computation to a compressed matrix of size
$(sm+1)\times(sm+1)$. Third (\S\ref{subsec-eval}), we evaluate the resolvent
via a triangular action on explicit test vectors and thereby prove the general
rational formula Theorem~\ref{thm-Gm-general}. Fourth
(\S\ref{subsec-consequences}), we assemble the downstream consequences:
parity recurrences, volume generating functions, the bivariate master
identity, and low-degree $h^*$-coefficient formulas. The section closes with
two dedicated subsections: \S\ref{subsec-s1} specializes to $s=1$ to verify
the formulas explicitly and prove Theorem~\ref{thm-gorenstein-s1}, while
\S\ref{subsec-gorenstein} records the resulting non-Gorenstein behavior for
$s\ge2$.

\subsection{Ehrhart setup and the extraction theorem}
\label{subsec-ehrhart-setup}

Recall that, for $d\ge2$,
$L_d^{(s)}(m) := \#(m\cP_d^{(s)} \cap \Z^d)$ denotes the Ehrhart counting
function of $\cP_d^{(s)}$.  It is an Ehrhart polynomial of degree $d$, and its
generating function
\begin{equation}\label{eq-Ehr-ds}
\Ehr_{\cP_d^{(s)}}(z)
= 1 + \sum_{m \ge 1} L_d^{(s)}(m)\, z^m
= \frac{h_d^{(s)}(z)}{(1-z)^{d+1}}
\end{equation}
defines the $h^*$-polynomial
$h_d^{(s)}(z)=\sum_{k=0}^d h_{d,k}^{(s)}z^k$.  Thus, as in the
introduction, $h_{d,k}^{(s)}$ denotes the $k$-th Ehrhart $h^*$-coefficient.

The following inversion formula is standard; see \cite[Chapter~3]{beck}.

\begin{lemma}[Inversion formula]\label{lem-hstar-inversion}
For fixed $d \ge 2$, $s \ge 1$, and $k \ge 0$, with the convention
$h_{d,k}^{(s)}=0$ for $k>d$, one has
\begin{equation}\label{eq-hstar-inversion}
h_{d,k}^{(s)}
= \sum_{i=0}^{k} (-1)^i \binom{d+1}{i} L_d^{(s)}(k-i).
\end{equation}
\end{lemma}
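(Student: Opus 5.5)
The identity follows from the defining relation \eqref{eq-Ehr-ds} by multiplying through by $(1-z)^{d+1}$ and comparing coefficients of $z^k$. Concretely, I would write
\[
h_d^{(s)}(z)=(1-z)^{d+1}\,\Ehr_{\cP_d^{(s)}}(z)
=\Bigl(\sum_{i=0}^{d+1}(-1)^i\binom{d+1}{i}z^i\Bigr)\Bigl(\sum_{j\ge0}L_d^{(s)}(j)z^j\Bigr).
\]
This uses the convention $L_d^{(s)}(0)=1$, which agrees with the constant term $1$ of the Ehrhart series and with $\#(0\cdot\cP\cap\Z^d)=1$. The coefficient of $z^k$ on the right-hand side is the Cauchy product $\sum_{i+j=k}(-1)^i\binom{d+1}{i}L_d^{(s)}(j)$. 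Setting $j=k-i$ gives exactly \eqref{eq-hstar-inversion}.

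Two small points have to be addressed. First, the summation range: the binomial $\binom{d+1}{i}$ vanishes for $i>d+1$, so summing $i$ from $0$ to $k$ is correct for every $k$, including $k\ge d+1$. Second, the convention $h_{d,k}^{(s)}=0$ for $k>d$ must be justified. I would invoke Ehrhart's theorem, quoted in the introduction: $L_d^{(s)}$ is a polynomial of degree $d$, so the series $\sum_j L_d^{(s)}(j)z^j$ equals a polynomial of degree at most $d$ divided by $(1-z)^{d+1}$. Hence the left-hand side $h_d^{(s)}(z)$ is a polynomial of degree at most $d$. For $k\ge d+1$ the sum therefore vanishes; equivalently, this is the vanishing of the $(d+1)$-st finite difference of a degree-$d$ polynomial.

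None of these steps presents a real obstacle. The only care needed is to make the $m=0$ convention explicit, so that the constant term of $\Ehr$ is identified with $L_d^{(s)}(0)$.
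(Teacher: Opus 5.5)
Your argument is correct. You multiply the Ehrhart series by $(1-z)^{d+1}$ and read off the coefficient of $z^k$, using $L_d^{(s)}(0)=1$, with Ehrhart's degree-$d$ polynomiality handling $k>d$; this is exactly the standard derivation, which the paper does not write out but simply cites as standard (Beck--Robins, Chapter~3).
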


The extraction theorem expresses the coefficient-generating functions
$H_k^{(s)}(y)$ directly in terms of the fixed-dilation series $G_m^{(s)}(y)$.
We therefore extend the family to dimensions $d=0$ and $d=1$ so that $G_m^{(s)}(y)$ is
well-defined for all $d\ge0$. Set $\cP_0^{(s)}:=\{0\}$ (so $L_0^{(s)}(m)=1$),
and $\cP_1^{(s)}:=[0,s]$ motivated by the projection of $\cP_2^{(s)}$ onto the
first coordinate: since $x_1+x_2\le s$ and $x_2\ge0$ imply $x_1\le s$, this
gives $L_1^{(s)}(m)=ms+1$. These low-dimensional conventions are used only
within this section. Define
\[
G_m^{(s)}(y) := \sum_{d \ge 0} L_d^{(s)}(m)\, y^d,
\qquad
H_k^{(s)}(y) := \sum_{d \ge 0} h_{d,k}^{(s)}\, y^d.
\]

\begin{proof}[Proof of Theorem~\ref{thm-hstar-extraction}]
For each fixed $i\ge 0$, differentiating
\[
y\,G_{k-i}^{(s)}(y)=\sum_{d\ge 0}L_d^{(s)}(k-i)\,y^{d+1}
\]
termwise gives
\[
\frac{1}{i!}\,y^{i-1}\frac{d^i}{dy^i}\!\Bigl(y\,G_{k-i}^{(s)}(y)\Bigr)
=\sum_{d\ge 0}\binom{d+1}{i}L_d^{(s)}(k-i)\,y^d.
\]
(For $i\ge1$ the factor $y^{i-1}$ is cancelled by $y^i$ produced when
differentiating $y\,G_{k-i}^{(s)}(y)$, so each summand is a formal power
series in $y$ with no negative-exponent terms.)
Therefore the right-hand side of \eqref{eq-Hk-extraction} has coefficient
of $y^d$ equal to
\[
\sum_{i=0}^k(-1)^i\binom{d+1}{i}L_d^{(s)}(k-i) = h_{d,k}^{(s)}
\quad\text{for }d\ge 2,
\]
by Lemma~\ref{lem-hstar-inversion}.

For $d=0$: if $k=0$, the coefficient is $L_0^{(s)}(0)=1=h_{0,0}^{(s)}$; if $k\ge1$,
only $i=0,1$ contribute and $L_0^{(s)}\equiv1$ gives $1-1=0=h_{0,k}^{(s)}$.
For $d=1$: using $L_1^{(s)}(m)=sm+1$, we get $h_{1,0}^{(s)}=1$,
$h_{1,1}^{(s)}=(s+1)-2=s-1$, and $h_{1,k}^{(s)}=0$ for $k\ge2$ (since
$L_1^{(s)}$ is linear). These match the Ehrhart series
$\Ehr_{\cP_1^{(s)}}(z) = (1+(s-1)z)/(1-z)^2$.
\end{proof}

Theorem~\ref{thm-hstar-extraction} is purely formal until the series
$G_m^{(s)}(y)$ are made explicit. The next two subsections supply these
explicit formulas for all $s,m\ge1$.

\subsection{Fixed-\texorpdfstring{$m$}{m} transfer matrices and resolvent
compression}
\label{subsec-fixedm-transfer}

Just as Section~\ref{sec-2} encoded the dimension-varying problem via the
double-step matrix $C_s$, we now encode the fixed-dilation problem via an
analogous pair of transition matrices indexed by integer vectors of length up
to $(s+1)m$.

Fix integers $s,m\ge1$. Define matrices of size
$((s+1)m+1)\times((s+1)m+1)$ by
\[
(\mathsf X_m^{(s)})_{ij}=\mathbf{1}[i+j\le sm],
\qquad
(\mathsf Y_m^{(s)})_{ij}=\mathbf{1}[i+j\le (s+1)m],
\]
for $0\le i,j\le (s+1)m$,
set $\mathsf Z_m^{(s)}:=\mathsf X_m^{(s)}\mathsf Y_m^{(s)}$ and
$\mathbf e:=(1,\dots,1)^\top\in\R^{(s+1)m+1}$, and introduce the parity-split
dimension-generating functions for the dilates:
\[
\mathcal O_m^{(s)}(z):=\mathbf e^\top(I-z\mathsf Z_m^{(s)})^{-1}\mathbf e,
\qquad
\mathcal E_m^{(s)}(z):=\mathbf e^\top(I-z\mathsf Z_m^{(s)})^{-1}\mathsf X_m^{(s)}\mathbf e.
\]
Here $\mathcal O_m^{(s)}(z)$ collects the odd-dimensional counts and
$\mathcal E_m^{(s)}(z)$ the even-dimensional counts, in exact parallel with
$\Phi_s$ and $\Psi_s$ from Section~\ref{sec-2}.
For clarity, $\Delta_{s,m}$ below is the common denominator of these two
series, while the vectors $U_j$ and $V_j$ are auxiliary test vectors used only
to evaluate the compressed resolvent in Lemma~\ref{lem-Gm-triangular}.

For the explicit rational formulas, define, for $0\le j\le s$,
\[
a_j^{(s,m)}:=\binom{(s+1-j)m+j+1}{2j},
\qquad
b_j^{(s,m)}:=\binom{(s+1-j)m+j+1}{2j+1},
\]
and set $\Delta_{s,m}(z):=\sum_{j=0}^{s}(-1)^j a_j^{(s,m)}z^j$. This is the
common denominator of $\mathcal E_m^{(s)}$ and $\mathcal O_m^{(s)}$; note
that $a_0^{(s,m)}=1$, so $\Delta_{s,m}(0)=1$.

\begin{remark}\label{rem-delta-degenerate}
The denominator $\Delta_{s,m}(z)$ has degree at most $s$, but the actual
degree is
\[
\deg\Delta_{s,m}
= \max\bigl\{j\in\{0,\ldots,s\}: a_j^{(s,m)}\ne0\bigr\}
= s - \Bigl\lfloor\frac{s-1}{m+1}\Bigr\rfloor
\]
for $m\ge1$, since $a_j^{(s,m)}=\binom{(s+1-j)m+j+1}{2j}\ne0$ iff
$(s+1-j)m+j+1\ge 2j$, equivalently
\[
j\le \left\lfloor\frac{(s+1)m+1}{m+1}\right\rfloor
= s-\left\lfloor\frac{s-1}{m+1}\right\rfloor .
\]
In particular $\deg\Delta_{s,m}=s$ whenever $m\ge s-2$, and the degree
first drops below $s$ for $m=1$ and $s\ge3$.
For example, $\Delta_{3,1}(z)=1-10z+5z^2$ has degree $2<3=s$.
\end{remark}

The key to the proof of Theorem~\ref{thm-Gm-general} is to compress the
large matrix $\mathsf Z_m^{(s)}$ to a smaller one via the push-through identity.
The following lemma carries out this compression and expresses both
$\mathcal E_m^{(s)}$ and $\mathcal O_m^{(s)}$ in terms of a matrix
$\mathsf A$ of size $(sm+1)\times(sm+1)$---a reduction exactly parallel to
the passage from $C_s$ to its resolvent in Section~\ref{sec-2}.

\begin{lemma}[Compressed resolvent formulas]\label{lem-Gm-compression}
Let $\mathsf S\in\R^{((s+1)m+1)\times(sm+1)}$ have upper block $J_{sm}$
($(J_{sm})_{ab}=\mathbf{1}[a+b\le sm]$) and lower block~$0$, and let
$\mathsf T=(I_{sm+1}\ 0)\in\R^{(sm+1)\times((s+1)m+1)}$.
Set $\mathsf R:=\mathsf T\mathsf Y_m^{(s)}$ and $\mathsf A:=\mathsf R\mathsf S$.
Then $\mathsf X_m^{(s)}=\mathsf S\mathsf T$, $\mathsf Z_m^{(s)}=\mathsf S\mathsf R$, and
\begin{equation}\label{eq-OE-general-compressed}
\mathcal E_m^{(s)}(z)=\mathbf c^\top(I-z\mathsf A)^{-1}\mathbf 1,
\qquad
\mathcal O_m^{(s)}(z)=(s+1)m+1+z\,\mathbf c^\top(I-z\mathsf A)^{-1}\mathbf d,
\end{equation}
where $\mathbf c^\top:=\mathbf e^\top\mathsf S=(sm+1,sm,\ldots,1)$ and
$\mathbf d:=\mathsf R\mathbf e=((s+1)m+1,\ldots,m+1)^\top$. The matrix
$\mathsf A$ has entries
\begin{equation}\label{eq-Aab-general}
\mathsf A_{ab}=sm-b+1-(a-b-m)_+,\qquad 0\le a,b\le sm.
\end{equation}
\end{lemma}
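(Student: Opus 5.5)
The plan is to verify the two matrix factorizations directly from the definitions, and then derive both formulas in \eqref{eq-OE-general-compressed} from the push-through identity
\[
(I-z\mathsf S\mathsf R)^{-1}\mathsf S=\mathsf S(I-z\mathsf R\mathsf S)^{-1}.
\]
This identity holds as formal power series in $z$: expand both sides geometrically and use $(\mathsf S\mathsf R)^n\mathsf S=\mathsf S(\mathsf R\mathsf S)^n$.

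\emph{Factorizations.} The product $\mathsf S\mathsf T$ has upper-left $(sm+1)\times(sm+1)$ block $J_{sm}$ and zeros elsewhere. On the other hand, $\mathbf 1[i+j\le sm]$ vanishes whenever $i>sm$ or $j>sm$, since the indices are nonnegative. Hence $\mathsf X_m^{(s)}=\mathsf S\mathsf T$. It follows that $\mathsf Z_m^{(s)}=\mathsf X_m^{(s)}\mathsf Y_m^{(s)}=\mathsf S(\mathsf T\mathsf Y_m^{(s)})=\mathsf S\mathsf R$.

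\emph{Even series.} Push-through gives
\[
\mathcal E_m^{(s)}(z)=\mathbf e^\top(I-z\mathsf S\mathsf R)^{-1}\mathsf S\mathsf T\mathbf e=(\mathbf e^\top\mathsf S)(I-z\mathsf A)^{-1}(\mathsf T\mathbf e).
\]
Here $\mathsf T\mathbf e=\mathbf 1\in\R^{sm+1}$. The vector $\mathbf e^\top\mathsf S$ lists the column sums of $J_{sm}$, and column $b$ has $sm-b+1$ ones. This gives $\mathbf c^\top=(sm+1,sm,\ldots,1)$.

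\emph{Odd series.} Write
\[
(I-z\mathsf S\mathsf R)^{-1}=I+z(I-z\mathsf S\mathsf R)^{-1}\mathsf S\mathsf R,
\]
so that
\[
\mathcal O_m^{(s)}(z)=\mathbf e^\top\mathbf e+z\,\mathbf e^\top\mathsf S(I-z\mathsf A)^{-1}\mathsf R\mathbf e .
\]
The first term is $\mathbf e^\top\mathbf e=(s+1)m+1$. For the second, $\mathbf d=\mathsf R\mathbf e$ consists of the first $sm+1$ row sums of $\mathsf Y_m^{(s)}$. Row $a$ has $(s+1)m-a+1$ ones, so $\mathbf d=((s+1)m+1,\ldots,m+1)^\top$.

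\emph{Entries of $\mathsf A$.} For $0\le a,b\le sm$,
\[
\mathsf A_{ab}=\sum_k \mathbf 1[a+k\le (s+1)m]\,\mathbf 1[k+b\le sm]=\min\bigl((s+1)m-a,\ sm-b\bigr)+1 .
\]
Both arguments of the minimum are nonnegative because $a,b\le sm$, so no positive-part truncation is needed. Using $\min(u,v)=v-(v-u)_+$ with $v-u=a-b-m$ yields \eqref{eq-Aab-general}.

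There is no real obstacle here. The only points needing care are the index bookkeeping (0-based indices, and the sizes of $\mathsf S$ and $\mathsf T$) and the observation that all resolvent manipulations are identities of formal power series in $z$, because $I-z\mathsf A$ and $I-z\mathsf Z_m^{(s)}$ are invertible over $\R[[z]]$.
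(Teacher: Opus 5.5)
Your proposal is correct and follows essentially the same route as the paper. Both arguments use the factorization $\mathsf X_m^{(s)}=\mathsf S\mathsf T$, the push-through identities $(I-z\mathsf S\mathsf R)^{-1}\mathsf S=\mathsf S(I-z\mathsf A)^{-1}$ and $(I-z\mathsf S\mathsf R)^{-1}=I+z\mathsf S(I-z\mathsf A)^{-1}\mathsf R$, and direct counting for $\mathbf c$, $\mathbf d$, and $\mathsf A_{ab}$. The differences are cosmetic: you justify push-through by geometric-series expansion rather than by multiplying out, and you explicitly check that both arguments of the minimum are nonnegative, which the paper leaves implicit.
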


\begin{proof}
We first explain the two factorizations.  The matrix $\mathsf X_m^{(s)}$ has
entries
\[
(\mathsf X_m^{(s)})_{ij}=\mathbf 1[i+j\le sm],
\qquad 0\le i,j\le (s+1)m .
\]
If either $i>sm$ or $j>sm$, then $i+j>sm$, so the corresponding entry is zero.
Hence $\mathsf X_m^{(s)}$ has only one nonzero block, namely its upper-left
$(sm+1)\times(sm+1)$ block $J_{sm}$.  This is exactly the factorization
\[
\mathsf X_m^{(s)}=\mathsf S\mathsf T.
\]
Therefore
\[
\mathsf Z_m^{(s)}
=\mathsf X_m^{(s)}\mathsf Y_m^{(s)}
=\mathsf S\mathsf T\mathsf Y_m^{(s)}
=\mathsf S\mathsf R,
\]
where $\mathsf R=\mathsf T\mathsf Y_m^{(s)}$.  By definition
$\mathsf A=\mathsf R\mathsf S$.

We now record explicitly the push-through identity used in the compression.
For rectangular matrices $S$ and $R$ of compatible sizes, one has the formal
power-series identity
\begin{equation}\label{eq-push-through-fixedm}
(I-zSR)^{-1}=I+zS(I-zRS)^{-1}R.
\end{equation}
Indeed, setting $B=(I-zRS)^{-1}$, we compute
\begin{align*}
(I-zSR)(I+zSBR)
&=I+zSBR-zSR-z^2SRSBR \\
&=I+zS\bigl(B-I-zRSB\bigr)R.
\end{align*}
Since $B=(I-zRS)^{-1}$ implies $B-I=zRSB$, the last term is zero, proving
\eqref{eq-push-through-fixedm}.  Multiplying \eqref{eq-push-through-fixedm}
on the right by $S$ also gives the companion identity
\begin{equation}\label{eq-push-through-companion-fixedm}
(I-zSR)^{-1}S=S(I-zRS)^{-1}.
\end{equation}
Applying \eqref{eq-push-through-fixedm} and
\eqref{eq-push-through-companion-fixedm} with $S=\mathsf S$ and
$R=\mathsf R$, we get
\begin{equation}\label{eq-resolvent-compressed-fixedm}
(I-z\mathsf Z_m^{(s)})^{-1}
=I+z\mathsf S(I-z\mathsf A)^{-1}\mathsf R,
\end{equation}
and
\begin{equation}\label{eq-resolvent-compressed-S-fixedm}
(I-z\mathsf Z_m^{(s)})^{-1}\mathsf S
=\mathsf S(I-z\mathsf A)^{-1}.
\end{equation}

We first compute the even-dimensional series.  Since
$\mathsf X_m^{(s)}=\mathsf S\mathsf T$, we have
\begin{align*}
\mathcal E_m^{(s)}(z)
&=\mathbf e^\top(I-z\mathsf Z_m^{(s)})^{-1}\mathsf X_m^{(s)}\mathbf e \\
&=\mathbf e^\top(I-z\mathsf Z_m^{(s)})^{-1}\mathsf S\mathsf T\mathbf e.
\end{align*}
Using \eqref{eq-resolvent-compressed-S-fixedm}, this becomes
\[
\mathcal E_m^{(s)}(z)
=\mathbf e^\top\mathsf S(I-z\mathsf A)^{-1}\mathsf T\mathbf e.
\]
Now $\mathbf e^\top\mathsf S=\mathbf c^\top$ and
$\mathsf T\mathbf e=\mathbf 1$, so
\[
\mathcal E_m^{(s)}(z)=\mathbf c^\top(I-z\mathsf A)^{-1}\mathbf 1.
\]

Next, for the odd-dimensional series, \eqref{eq-resolvent-compressed-fixedm}
gives
\begin{align*}
\mathcal O_m^{(s)}(z)
&=\mathbf e^\top(I-z\mathsf Z_m^{(s)})^{-1}\mathbf e \\
&=\mathbf e^\top\mathbf e
+z\,\mathbf e^\top\mathsf S(I-z\mathsf A)^{-1}\mathsf R\mathbf e.
\end{align*}
Since $\mathbf e^\top\mathbf e=(s+1)m+1$,
$\mathbf e^\top\mathsf S=\mathbf c^\top$, and
$\mathsf R\mathbf e=\mathbf d$, we obtain
\[
\mathcal O_m^{(s)}(z)
=(s+1)m+1+z\,\mathbf c^\top(I-z\mathsf A)^{-1}\mathbf d.
\]
This proves \eqref{eq-OE-general-compressed}.

It remains to compute the vectors $\mathbf c$, $\mathbf d$, and the entries of
$\mathsf A$.  For $0\le b\le sm$, the $b$-th component of
$\mathbf c^\top=\mathbf e^\top\mathsf S$ is the column sum of $\mathsf S$:
\[
c_b=\#\{0\le i\le sm:i+b\le sm\}=sm-b+1.
\]
Thus
\[
\mathbf c^\top=(sm+1,sm,\ldots,1).
\]
Similarly, because $\mathsf R=\mathsf T\mathsf Y_m^{(s)}$ consists of the
first $sm+1$ rows of $\mathsf Y_m^{(s)}$, for $0\le a\le sm$ we have
\[
d_a=(\mathsf R\mathbf e)_a
=\#\{0\le j\le (s+1)m:a+j\le (s+1)m\}
=(s+1)m-a+1.
\]
Therefore
\[
\mathbf d=((s+1)m+1,(s+1)m,\ldots,m+1)^\top.
\]
Finally,
\begin{align*}
\mathsf A_{ab}
&=(\mathsf R\mathsf S)_{ab}
=\sum_{t=0}^{(s+1)m}\mathsf R_{a,t}\mathsf S_{t,b} \\
&=\#\{t:0\le t\le sm-b,\ a+t\le (s+1)m\} \\
&=\min\{sm-b,(s+1)m-a\}+1.
\end{align*}
Since
\[
\min\{sm-b,(s+1)m-a\}=sm-b-(a-b-m)_+,
\]
we get
\[
\mathsf A_{ab}=sm-b+1-(a-b-m)_+,
\]
which proves \eqref{eq-Aab-general} and completes the proof.
\end{proof}

\subsection{Triangular evaluation and proof of Theorem~\ref{thm-Gm-general}}
\label{subsec-eval}

Lemma~\ref{lem-Gm-compression} reduces the problem to evaluating
$\mathbf c^\top(I-z\mathsf A)^{-1}\mathbf v$ for specific test vectors $\mathbf v$.
The key observation---parallel to the role of $Q_n(-y)$ and $P_n(-y)$ in
evaluating the adjugate numerators in Section~\ref{sec-2}---is that $\mathsf A$
acts in a triangular, rank-one-shift fashion on two explicit families of
vectors $U_j$ and $V_j$. This allows backward induction to compute the
resolvent action completely.

\begin{lemma}[Triangular action on test vectors]\label{lem-Gm-triangular}
For $0\le j\le s$, define vectors $U_j,V_j\in\R^{sm+1}$ by
\[
U_j(a):=\binom{(a-jm)_++j}{2j},
\qquad
V_j(a):=\binom{(a-jm)_++j+1}{2j+1},
\qquad 0\le a\le sm.
\]
Then $U_s\equiv0$ and $V_s\equiv0$. For $0\le j\le s-1$,
\begin{equation}\label{eq-AU-general}
\mathsf A U_j = a_{j+1}^{(s,m)}\mathbf 1 - U_{j+1},
\qquad
\mathbf c^\top U_j = a_{j+1}^{(s,m)},
\end{equation}
and
\begin{equation}\label{eq-AV-general}
\mathsf A V_j = \widetilde b_{j+1}^{(s,m)}\mathbf 1 - V_{j+1},
\qquad
\mathbf c^\top V_j = \widetilde b_{j+1}^{(s,m)},
\end{equation}
where $\widetilde b_{j+1}^{(s,m)}:=\binom{(s-j)m+j+3}{2j+3}=a_{j+1}^{(s,m)}+b_{j+1}^{(s,m)}$.
\end{lemma}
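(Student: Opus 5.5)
The plan is to split the action of $\mathsf A$ on an arbitrary vector into a constant part and a correction part, using the entry formula \eqref{eq-Aab-general} of Lemma~\ref{lem-Gm-compression}. For $v\in\R^{sm+1}$ and $0\le a\le sm$,
\[
(\mathsf A v)(a)=\sum_{b=0}^{sm}(sm-b+1)\,v(b)-\sum_{b=0}^{sm}(a-m-b)_+\,v(b)
=(\mathbf c^\top v)\,\mathbf 1(a)-(\mathcal D v)(a),
\]
where $\mathcal D v(a):=\sum_{b}(a-m-b)_+v(b)$ is a second-order discrete antiderivative of $v$, shifted by $m$. Both \eqref{eq-AU-general} and \eqref{eq-AV-general} will then follow once I show two things. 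First, $\mathcal D U_j=U_{j+1}$ and $\mathcal D V_j=V_{j+1}$. Second, $\mathbf c^\top U_j=a_{j+1}^{(s,m)}$ and $\mathbf c^\top V_j=\widetilde b_{j+1}^{(s,m)}$. The key observation is that $\mathbf c^\top v=\sum_b(N-b)_+v(b)$ with $N=sm+1$. This is exactly the same double sum as $\mathcal D v(a)$, evaluated at the formal point $a=(s+1)m+1$. So a single closed formula for the double sum handles both claims at once.

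\textbf{Vanishing of the top vectors.} For $a\le sm$ we have $(a-sm)_+=0$. Hence $U_s(a)=\binom{s}{2s}=0$ and $V_s(a)=\binom{s+1}{2s+1}=0$, since $s\ge1$. Likewise, for $j\ge1$, $U_j$ and $V_j$ vanish for $b\le jm$. Consequently the truncation $(\cdot)_+$ inside $U_j,V_j$ can be dropped, and $b$ effectively ranges over $b\ge jm$. For $j=0$ we have $U_0\equiv1$ and $V_0(b)=b+1$, and no truncation is involved at all.

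\textbf{Double hockey-stick computation.} Substitute $b=jm+t$ and write $x:=a-(j+1)m$. Then
\[
\mathcal D U_j(a)=\sum_{t\ge0}(x-t)_+\binom{t+j}{2j}.
\]
I write $(x-t)_+=\sum_{u=t}^{x-1}1$ and apply the hockey-stick identity twice. This should give $\binom{x+j+1}{2j+2}$ for $x\ge0$, and $0$ for $x<0$; this is exactly $U_{j+1}(a)$. The same computation applied to $V_j$, whose entries are $\binom{t+j+1}{2j+1}$, gives $\binom{x+j+2}{2j+3}=V_{j+1}(a)$.

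\textbf{Inner products with $\mathbf c$.} I evaluate the same closed form at $a=(s+1)m+1$, i.e.\ $x=(s-j)m+1$. This is legitimate because the summation range $b\le a-m-1=sm$ coincides with the support of $v$. The result is $\mathbf c^\top U_j=\binom{(s-j)m+j+2}{2j+2}=a_{j+1}^{(s,m)}$ and $\mathbf c^\top V_j=\binom{(s-j)m+j+3}{2j+3}=\widetilde b_{j+1}^{(s,m)}$. Finally, Pascal's rule with $n=(s-j)m+j+2$ gives $\binom{n}{2j+2}+\binom{n}{2j+3}=\binom{n+1}{2j+3}$, which is the stated identity $\widetilde b_{j+1}^{(s,m)}=a_{j+1}^{(s,m)}+b_{j+1}^{(s,m)}$.

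\textbf{Main obstacle.} The main obstacle is bookkeeping at the boundaries. I must check that the lower cutoff $b\ge jm$ introduces no stray terms when $j=0$. I must also check that the upper limit $b\le sm$ never truncates the double sum for $a\le sm$, and that it does not truncate it at the formal point $a=(s+1)m+1$ either. Once these are verified, the rest is the double hockey-stick identity.
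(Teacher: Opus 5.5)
Your proposal is correct and follows essentially the paper's proof: split $\mathsf A_{ab}=(sm-b+1)-(a-b-m)_+$ into the constant part $(\mathbf c^\top v)\mathbf 1$ and the tail, and evaluate both with a double hockey-stick summation after the shift $b=jm+t$. The only cosmetic difference is how you get $\mathbf c^\top U_j$ and $\mathbf c^\top V_j$: you evaluate the tail formula at the formal point $a=(s+1)m+1$, whereas the paper evaluates the full contribution with a second hockey-stick identity (the same identity, with $R=N+1$) and then reads off $\mathbf c^\top U_j$ from the $a=0$ row, where the tail vanishes.
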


\begin{proof}
Since $a\le sm$, the quantity $(a-sm)_+$ vanishes. Therefore
\[
U_s(a)=\binom{s}{2s}=0,
\qquad
V_s(a)=\binom{s+1}{2s+1}=0
\]
for every $0\le a\le sm$. Hence $U_s\equiv0$ and $V_s\equiv0$.

We prove the identities for $U_j$ first.  Fix $0\le j\le s-1$.  From
\eqref{eq-Aab-general},
\begin{align*}
(\mathsf AU_j)(a)
&=\sum_{b=0}^{sm}
\bigl(sm-b+1-(a-b-m)_+\bigr)
\binom{(b-jm)_++j}{2j}.
\end{align*}
The summand is zero unless $b\ge jm$, because otherwise
$(b-jm)_+=0$ and, for $j\ge1$, $\binom{j}{2j}=0$; the case $j=0$ is also
covered since the lower limit is then $0$.  Put $t=b-jm$.  The contribution
from the first term $sm-b+1$ is
\[
\sum_{t=0}^{(s-j)m}\bigl((s-j)m-t+1\bigr)\binom{t+j}{2j}.
\]
By the hockey-stick identity
\[
\sum_{t=0}^{N}(N-t+1)\binom{t+j}{2j}
=\binom{N+j+2}{2j+2},
\]
with $N=(s-j)m$, this full contribution equals
\[
\binom{(s-j)m+j+2}{2j+2}=a_{j+1}^{(s,m)}.
\]

It remains to subtract the tail coming from $(a-b-m)_+$.  This term is nonzero
precisely when $b<a-m$, equivalently $t<a-(j+1)m$.  If $a\le (j+1)m$, there
is no tail, and $U_{j+1}(a)=0$, so the desired identity already holds.  If
$a>(j+1)m$, set $R=a-(j+1)m$.  Then the tail is
\[
\sum_{t=0}^{R-1}(R-t)\binom{t+j}{2j}.
\]
Using the second hockey-stick identity
\[
\sum_{t=0}^{R-1}(R-t)\binom{t+j}{2j}
=\binom{R+j+1}{2j+2},
\]
which, like the previous identity, follows by summing the ordinary
hockey-stick identity once more (see, e.g., \cite[Section~5.1]{GKP}), we obtain
\[
\binom{a-(j+1)m+j+1}{2j+2}=U_{j+1}(a).
\]
Combining the full contribution and the tail gives
\[
\mathsf AU_j=a_{j+1}^{(s,m)}\mathbf 1-U_{j+1}.
\]
Finally, when $a=0$, the row $\mathsf A_{0,b}$ is $sm-b+1=c_b$, because
$(0-b-m)_+=0$.  Thus the $a=0$ component of the identity just proved is
\[
\mathbf c^\top U_j=a_{j+1}^{(s,m)}-U_{j+1}(0).
\]
Since $U_{j+1}(0)=0$, this gives $\mathbf c^\top U_j=a_{j+1}^{(s,m)}$.

The proof for $V_j$ is parallel.  Starting from
\[
(\mathsf AV_j)(a)=\sum_{b=0}^{sm}
\bigl(sm-b+1-(a-b-m)_+\bigr)
\binom{(b-jm)_++j+1}{2j+1},
\]
the full contribution after the same change of variables is
\[
\sum_{t=0}^{(s-j)m}\bigl((s-j)m-t+1\bigr)\binom{t+j+1}{2j+1}
=\binom{(s-j)m+j+3}{2j+3}
=\widetilde b_{j+1}^{(s,m)}.
\]
The possible tail is, with $R=a-(j+1)m$,
\[
\sum_{t=0}^{R-1}(R-t)\binom{t+j+1}{2j+1}
=\binom{R+j+2}{2j+3}=V_{j+1}(a),
\]
and it is absent when $R\le0$, exactly when $V_{j+1}(a)=0$.  Therefore
\[
\mathsf AV_j=\widetilde b_{j+1}^{(s,m)}\mathbf 1-V_{j+1}.
\]
Taking again the $a=0$ component gives
\[
\mathbf c^\top V_j=\widetilde b_{j+1}^{(s,m)},
\]
because $V_{j+1}(0)=0$.  Finally,
\[
\widetilde b_{j+1}^{(s,m)}
=\binom{(s-j)m+j+3}{2j+3}
=\binom{(s-j)m+j+2}{2j+2}+
  \binom{(s-j)m+j+2}{2j+3}
=a_{j+1}^{(s,m)}+b_{j+1}^{(s,m)}
\]
by Pascal's rule.  This proves all assertions.
\end{proof}

With the triangular action established, the proof of
Theorem~\ref{thm-Gm-general} reduces to a backward induction that exactly
mirrors the induction used to identify $\det(I-yC_s)=Q_{s+2}(-y)$ in
Section~\ref{sec-2}.

\begin{proof}[Proof of Theorem~\ref{thm-Gm-general}]
The proof uses Lemma~\ref{lem-Gm-compression} to reduce the large resolvent to
the compressed matrix $\mathsf A$, and then uses Lemma~\ref{lem-Gm-triangular}
to evaluate the compressed resolvent on the test vectors $U_j$ and $V_j$.
By Lemma~\ref{lem-Gm-compression}, and using $\mathbf 1=U_0$, we have
\[
\mathcal E_m^{(s)}(z)=\mathbf c^\top(I-z\mathsf A)^{-1}U_0.
\]
Also $V_0(a)=a+1$, whereas $d_a=(s+1)m-a+1$; hence
\[
\mathbf d=((s+1)m+2)\mathbf 1-V_0=(b_0^{(s,m)}+1)U_0-V_0,
\]
because $b_0^{(s,m)}=(s+1)m+1$.  Therefore
\[
\mathcal O_m^{(s)}(z)
=b_0^{(s,m)}+z\,\mathbf c^\top(I-z\mathsf A)^{-1}
\bigl((b_0^{(s,m)}+1)U_0-V_0\bigr).
\]
For $0\le j\le s$, define
\[
T_j(z):=\mathbf c^\top(I-z\mathsf A)^{-1}U_j,
\qquad
S_j(z):=\mathbf c^\top(I-z\mathsf A)^{-1}V_j.
\]
Since $U_s\equiv V_s\equiv0$ by Lemma~\ref{lem-Gm-triangular}, we have
$T_s(z)=S_s(z)=0$.

\textit{Even part.}  We derive a backward recurrence for $T_j$.  Since
\[
(I-z\mathsf A)^{-1}=I+z(I-z\mathsf A)^{-1}\mathsf A,
\]
Lemma~\ref{lem-Gm-triangular} gives, for $0\le j\le s-1$,
\begin{align*}
T_j(z)
&=\mathbf c^\top U_j+z\,\mathbf c^\top(I-z\mathsf A)^{-1}\mathsf AU_j \\
&=a_{j+1}^{(s,m)}+z\,\mathbf c^\top(I-z\mathsf A)^{-1}
\bigl(a_{j+1}^{(s,m)}\mathbf 1-U_{j+1}\bigr) \\
&=a_{j+1}^{(s,m)}+z\bigl(a_{j+1}^{(s,m)}T_0(z)-T_{j+1}(z)\bigr).
\end{align*}
Set
\[
p_j(z):=\sum_{r=0}^{s-j-1}(-1)^r a_{j+r+1}^{(s,m)}z^r,
\qquad 0\le j\le s,
\]
where the empty sum is $p_s(z)=0$.  Then
\[
p_j(z)=a_{j+1}^{(s,m)}-zp_{j+1}(z).
\]
We claim by backward induction that
\[
T_j(z)=p_j(z)(1+zT_0(z))
\qquad (0\le j\le s).
\]
The claim is true for $j=s$, since both sides are zero.  If it holds for
$j+1$, then the recurrence for $T_j$ gives
\begin{align*}
T_j(z)
&=a_{j+1}^{(s,m)}+z\bigl(a_{j+1}^{(s,m)}T_0(z)-p_{j+1}(z)(1+zT_0(z))\bigr)\\
&=(a_{j+1}^{(s,m)}-zp_{j+1}(z))(1+zT_0(z))
=p_j(z)(1+zT_0(z)).
\end{align*}
Putting $j=0$ gives
\[
T_0(z)=p_0(z)(1+zT_0(z)).
\]
Thus
\[
T_0(z)=\frac{p_0(z)}{1-zp_0(z)}.
\]
Since
\[
1-zp_0(z)=1+\sum_{j=1}^{s}(-1)^j a_j^{(s,m)}z^j
=\Delta_{s,m}(z),
\]
and
\[
p_0(z)=\sum_{j=0}^{s-1}(-1)^j a_{j+1}^{(s,m)}z^j,
\]
we obtain \eqref{eq-Em-general}.

\textit{Odd part.}  The same argument, now using the $V_j$-identities in
Lemma~\ref{lem-Gm-triangular}, gives
\[
S_j(z)=q_j(z)(1+zT_0(z)),
\]
where
\[
q_j(z):=\sum_{r=0}^{s-j-1}(-1)^r\widetilde b_{j+r+1}^{(s,m)}z^r.
\]
Because $1+zT_0(z)=1/\Delta_{s,m}(z)$ from the even part, we have
\[
S_0(z)=\frac{q_0(z)}{\Delta_{s,m}(z)}.
\]
Therefore
\begin{align*}
\mathcal O_m^{(s)}(z)
&=b_0^{(s,m)}+z\bigl((b_0^{(s,m)}+1)T_0(z)-S_0(z)\bigr)\\
&=\frac{b_0^{(s,m)}\Delta_{s,m}(z)
+z\bigl((b_0^{(s,m)}+1)p_0(z)-q_0(z)\bigr)}{\Delta_{s,m}(z)}.
\end{align*}
It remains to simplify the numerator.  Since
$\widetilde b_{j+1}^{(s,m)}=a_{j+1}^{(s,m)}+b_{j+1}^{(s,m)}$ by
Lemma~\ref{lem-Gm-triangular},
\begin{align*}
&b_0^{(s,m)}\Delta_{s,m}(z)
+z\bigl((b_0^{(s,m)}+1)p_0(z)-q_0(z)\bigr) \\
&\quad = b_0^{(s,m)}\Delta_{s,m}(z)
+z\left(b_0^{(s,m)}p_0(z)-\sum_{r=0}^{s-1}(-1)^r b_{r+1}^{(s,m)}z^r\right).
\end{align*}
The first two terms combine to
\[
b_0^{(s,m)}\bigl(\Delta_{s,m}(z)+zp_0(z)\bigr)=b_0^{(s,m)},
\]
because $\Delta_{s,m}(z)=1-zp_0(z)$.  Hence the numerator is
\[
b_0^{(s,m)}-
\sum_{r=0}^{s-1}(-1)^r b_{r+1}^{(s,m)}z^{r+1}
=
\sum_{j=0}^{s}(-1)^j b_j^{(s,m)}z^j.
\]
This proves \eqref{eq-Om-general}.

\textit{Assembly.}  For $d\ge2$, the coefficients of
$\mathcal O_m^{(s)}$ and $\mathcal E_m^{(s)}$ give the odd and even
fixed-dilation counts.  The only mismatch is in dimension $1$: the constant
term of $\mathcal O_m^{(s)}$ is the auxiliary value $(s+1)m+1$, whereas the
Ehrhart convention is $L_1^{(s)}(m)=sm+1$.  Thus subtracting $m$ from the odd
series corrects precisely the coefficient of $y$.  Including the
zero-dimensional term gives
\[
G_m^{(s)}(y)=1+y(\mathcal O_m^{(s)}(y^2)-m)+y^2\mathcal E_m^{(s)}(y^2),
\]
which is \eqref{eq-Gms-general}.
\end{proof}

\begin{remark}[Specialization to the original lattice-point counts]
\label{rem-m1-specialization}
For $m=1$, the fixed-dilation series recovers the original dimension
generating function, with the low-dimensional normalization
\[
G_1^{(s)}(x)=1+(s+1)x+F_s(x).
\]
Indeed, $L_d^{(s)}(1)=N_d(s)$ for $d\ge2$.  In this specialization
\[
a_j^{(s,1)}=\binom{s+2}{2j},\qquad
b_j^{(s,1)}=\binom{s+2}{2j+1},
\]
so $\Delta_{s,1}(z)=Q_{s+2}(-z)$ and Theorem~\ref{thm-Gm-general}
recovers the rational formula for $F_s$.  Nevertheless, this specialization
only gives the rational dimension generating function.  It does not reveal the
M\"{o}bius recurrence \eqref{eq-Phi-rec-main} or the trigonometric closed
forms \eqref{eq-Phi-tan-main}--\eqref{eq-Psi-trig-main}; those use the
first-order Pascal system for $P_n$ and $Q_n$ and justify the direct analysis in
Section~\ref{sec-2}.
\end{remark}

\subsection{Consequences of the general formula}
\label{subsec-consequences}

With Theorems~\ref{thm-hstar-extraction} and \ref{thm-Gm-general} in hand,
we now derive their consequences in the same order as
Subsection~\ref{subsec-main-proofs}---recurrences, then the trigonometric/analytic
structure, then the full $h^*$-picture.

\subsubsection*{Parity recurrences and volume generating functions}

\begin{corollary}[Parity recurrences for fixed $m$]\label{cor-parity-rec}
Fix $s,m\ge1$.  Write
\[
E_r^{(s,m)}:=L_{2r+2}^{(s)}(m)\qquad (r\ge0),
\]
and
\[
O_r^{(s,m)}:=L_{2r+1}^{(s)}(m)\qquad (r\ge1).
\]
Then the even-dimensional sequence satisfies, for $r\ge s$,
\[
E_r^{(s,m)}=
\sum_{j=1}^{s}(-1)^{j+1}a_j^{(s,m)}E_{r-j}^{(s,m)}.
\]
The odd-dimensional sequence satisfies, for $r\ge s+1$,
\[
O_r^{(s,m)}=
\sum_{j=1}^{s}(-1)^{j+1}a_j^{(s,m)}O_{r-j}^{(s,m)}.
\]
Equivalently, if one uses the auxiliary initial value
\[
\widetilde O_0^{(s,m)}:=(s+1)m+1
\]
and sets
\[
\widetilde O_r^{(s,m)}:=O_r^{(s,m)}\qquad (r\ge1),
\]
then the same recurrence holds for $\widetilde O_r^{(s,m)}$ for all
$r\ge s+1$.
\end{corollary}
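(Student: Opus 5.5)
The plan is to read both recurrences off the rational forms in Theorem~\ref{thm-Gm-general} by clearing the common denominator $\Delta_{s,m}(z)=\sum_{j=0}^{s}(-1)^ja_j^{(s,m)}z^j$, which satisfies $\Delta_{s,m}(0)=a_0^{(s,m)}=1$. First we confirm the indexing. By the assembly step in the proof of Theorem~\ref{thm-Gm-general} (equivalently \eqref{eq-Gms-general}), the coefficient of $z^r$ in $\mathcal E_m^{(s)}(z)$ is $L_{2r+2}^{(s)}(m)=E_r^{(s,m)}$ for $r\ge0$. Likewise, the coefficient of $z^r$ in $\mathcal O_m^{(s)}(z)$ is $L_{2r+1}^{(s)}(m)=O_r^{(s,m)}$ for $r\ge1$, while its constant term is the auxiliary value $(s+1)m+1=\widetilde O_0^{(s,m)}$. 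Thus $\mathcal O_m^{(s)}(z)=\sum_{r\ge0}\widetilde O_r^{(s,m)}z^r$ and $\mathcal E_m^{(s)}(z)=\sum_{r\ge0}E_r^{(s,m)}z^r$.

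For the even sequence, we multiply \eqref{eq-Em-general} by $\Delta_{s,m}(z)$. This gives $\Delta_{s,m}(z)\,\mathcal E_m^{(s)}(z)=\sum_{j=0}^{s-1}(-1)^ja_{j+1}^{(s,m)}z^j$, a polynomial of degree at most $s-1$. For $r\ge s$ the coefficient of $z^r$ on the right vanishes. On the left it equals $\sum_{j=0}^{s}(-1)^ja_j^{(s,m)}E_{r-j}^{(s,m)}$, where every index $r-j\ge0$ because $j\le s\le r$. Using $a_0^{(s,m)}=1$ and moving the terms with $j\ge1$ to the other side yields $E_r^{(s,m)}=\sum_{j=1}^{s}(-1)^{j+1}a_j^{(s,m)}E_{r-j}^{(s,m)}$. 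Coefficients $a_j^{(s,m)}$ that vanish (see Remark~\ref{rem-delta-degenerate}) cause no trouble, since they simply drop out.

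For the odd sequence, we do the same with \eqref{eq-Om-general}. Here $\Delta_{s,m}(z)\sum_{r\ge0}\widetilde O_r^{(s,m)}z^r=\sum_{j=0}^{s}(-1)^jb_j^{(s,m)}z^j$ has degree at most $s$, so the coefficient comparison is valid for $r\ge s+1$. This gives the stated recurrence for $\widetilde O_r^{(s,m)}$. When $r\ge s+1$, every index $r-j$ with $j\le s$ is at least $1$, so $\widetilde O$ may be replaced by $O$ throughout, and the auxiliary value $\widetilde O_0^{(s,m)}$ never enters. This proves the odd statement and its equivalent reformulation.

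The only delicate step is the bookkeeping of indices: the even numerator has degree $s-1$, which explains the threshold $r\ge s$, while the odd numerator has degree $s$, which explains $r\ge s+1$. We also need the shifted constant term of $\mathcal O_m^{(s)}$ to be excluded from every instance of the recurrence, which holds precisely because $r-j\ge1$ in that range.
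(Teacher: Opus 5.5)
Your proposal is correct and follows the paper's proof: multiply each rational form in Theorem~\ref{thm-Gm-general} by $\Delta_{s,m}(z)$, then compare coefficients of $z^r$ beyond the numerator degrees $s-1$ and $s$. The paper handles the auxiliary constant term $\widetilde O_0^{(s,m)}$ the same way, by noting that $r-j\ge1$ whenever $r\ge s+1$ and $1\le j\le s$.
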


\begin{proof}
Write
\[
\Delta_{s,m}(z)=\sum_{j=0}^{s}(-1)^j a_j^{(s,m)}z^j,
\qquad a_0^{(s,m)}=1.
\]

For the even part, Theorem~\ref{thm-Gm-general} gives
\[
\mathcal E_m^{(s)}(z)
=
\frac{A_E(z)}{\Delta_{s,m}(z)},
\qquad
A_E(z):=\sum_{j=0}^{s-1}(-1)^j a_{j+1}^{(s,m)}z^j.
\]
Thus
\[
\Delta_{s,m}(z)\mathcal E_m^{(s)}(z)=A_E(z).
\]
Since
\[
\mathcal E_m^{(s)}(z)=\sum_{r\ge0}E_r^{(s,m)}z^r,
\]
comparison of the coefficient of $z^r$ gives
\[
\sum_{j=0}^{s}(-1)^j a_j^{(s,m)}E_{r-j}^{(s,m)}=[z^r]A_E(z),
\]
where terms with negative subscripts are omitted.  Since $\deg A_E\le s-1$,
the right-hand side is zero for $r\ge s$.  Hence
\[
E_r^{(s,m)}=
\sum_{j=1}^{s}(-1)^{j+1}a_j^{(s,m)}E_{r-j}^{(s,m)},
\qquad r\ge s.
\]

For the odd part, Theorem~\ref{thm-Gm-general} gives
\[
\mathcal O_m^{(s)}(z)
=
\frac{A_O(z)}{\Delta_{s,m}(z)},
\qquad
A_O(z):=\sum_{j=0}^{s}(-1)^j b_j^{(s,m)}z^j.
\]
The constant term of $\mathcal O_m^{(s)}(z)$ is the auxiliary value
\[
\widetilde O_0^{(s,m)}=(s+1)m+1,
\]
whereas the actual one-dimensional Ehrhart value is
$L_1^{(s)}(m)=sm+1$.  For $r\ge1$, the coefficient of $z^r$ in
$\mathcal O_m^{(s)}(z)$ is $L_{2r+1}^{(s)}(m)$.  Thus
\[
\mathcal O_m^{(s)}(z)=\sum_{r\ge0}\widetilde O_r^{(s,m)}z^r.
\]
Comparing coefficients in
\[
\Delta_{s,m}(z)\mathcal O_m^{(s)}(z)=A_O(z)
\]
gives
\[
\sum_{j=0}^{s}(-1)^j a_j^{(s,m)}\widetilde O_{r-j}^{(s,m)}
=[z^r]A_O(z),
\]
again omitting terms with negative subscripts.  Since $\deg A_O\le s$,
the right-hand side is zero for $r\ge s+1$.  Therefore
\[
\widetilde O_r^{(s,m)}=
\sum_{j=1}^{s}(-1)^{j+1}a_j^{(s,m)}\widetilde O_{r-j}^{(s,m)},
\qquad r\ge s+1.
\]
For $r\ge s+1$ and $1\le j\le s$, one has $r-j\ge1$, so
$\widetilde O_{r-j}^{(s,m)}=O_{r-j}^{(s,m)}$.  Hence the same recurrence holds
for the actual odd-dimensional sequence $O_r^{(s,m)}$ for $r\ge s+1$.
\end{proof}

\begin{corollary}[Volume generating functions]\label{cor-vol-general}
For fixed $s\ge1$, set $A_j^{(s)}:=(s+1-j)^{2j}/(2j)!$ and
$B_j^{(s)}:=(s+1-j)^{2j+1}/(2j+1)!$ for $0\le j\le s$, and let
$\Delta_s^{\mathrm{vol}}(z):=\sum_{j=0}^{s}(-1)^jA_j^{(s)}z^j$.
Define $\mathscr E_s(z)$ and $\mathscr O_s(z)$ as the limits of
$m^{-2}\mathcal E_m^{(s)}(z/m^2)$ and $m^{-1}\mathcal O_m^{(s)}(z/m^2)$
as $m\to\infty$, where each limit is taken coefficientwise in $z$:
for each fixed power of $z$, extract the coefficient as a rational function
of $m$ and take its leading term, then reassemble into a formal power series.
Then
\[
\mathscr E_s(z) = \frac{\sum_{j=0}^{s-1}(-1)^jA_{j+1}^{(s)}z^j}{\Delta_s^{\mathrm{vol}}(z)},
\qquad
\mathscr O_s(z) = \frac{\sum_{j=0}^{s}(-1)^jB_j^{(s)}z^j}{\Delta_s^{\mathrm{vol}}(z)},
\]
and the volume generating function
$\mathcal V_s(y):=\sum_{d\ge0}\vol(\cP_d^{(s)})\,y^d$ satisfies
$\mathcal V_s(y)=1+y(\mathscr O_s(y^2)-1)+y^2\mathscr E_s(y^2)$.
\end{corollary}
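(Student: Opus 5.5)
The plan is to work directly from the rational formulas of Theorem~\ref{thm-Gm-general} and pass to the limit coefficientwise. Write $\mathcal E_m^{(s)}(z)=\sum_{r\ge0}L_{2r+2}^{(s)}(m)z^r$ and $\mathcal O_m^{(s)}(z)=\widetilde O_0^{(s,m)}+\sum_{r\ge1}L_{2r+1}^{(s)}(m)z^r$. The coefficient of $z^r$ in $m^{-2}\mathcal E_m^{(s)}(z/m^2)$ is $m^{-2r-2}L_{2r+2}^{(s)}(m)$. Since $L_d^{(s)}$ is a polynomial of degree $d$ with leading coefficient $\vol(\cP_d^{(s)})$, this tends to $\vol(\cP_{2r+2}^{(s)})$. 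Similarly, $m^{-2r-1}L_{2r+1}^{(s)}(m)\to\vol(\cP_{2r+1}^{(s)})$ for $r\ge1$. The constant term of $m^{-1}\mathcal O_m^{(s)}$ is $((s+1)m+1)/m\to s+1$. Thus $\mathscr E_s(z)=\sum_{r\ge0}\vol(\cP_{2r+2}^{(s)})z^r$, and $\mathscr O_s(z)$ equals $s+1$ plus the odd volume series.

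\textbf{Computing the limits from the rational form.} Substitute $z\mapsto z/m^2$ in \eqref{eq-Em-general} and \eqref{eq-Om-general}. Since $a_j^{(s,m)}=\binom{(s+1-j)m+j+1}{2j}$ is a polynomial in $m$ of degree exactly $2j$ with leading coefficient $(s+1-j)^{2j}/(2j)!=A_j^{(s)}$, we get
\[
a_j^{(s,m)}m^{-2j}=A_j^{(s)}+O(1/m).
\]
Hence $\Delta_{s,m}(z/m^2)$ converges coefficientwise to $\Delta_s^{\mathrm{vol}}(z)$. In the even numerator, $m^{-2}a_{j+1}^{(s,m)}m^{-2j}\to A_{j+1}^{(s)}$. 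In the odd numerator, $m^{-1}b_j^{(s,m)}m^{-2j}\to B_j^{(s)}$, since $b_j^{(s,m)}$ has degree $2j+1$ with leading coefficient $(s+1-j)^{2j+1}/(2j+1)!$.

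\textbf{Justifying the interchange.} The only subtle point is that coefficientwise limits commute with forming the quotient of power series. Both denominators have constant term $1$, and $\Delta_s^{\mathrm{vol}}(0)=A_0^{(s)}=1$. The coefficient of $z^r$ in $N(z)/D(z)$ with $D(0)=1$ is a fixed polynomial in the first $r+1$ coefficients of $N$ and $D$. Coefficientwise convergence of numerators and denominators therefore gives coefficientwise convergence of the quotients. This yields the stated formulas for $\mathscr E_s$ and $\mathscr O_s$.

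We also need the coefficient of $z^r$ to be a polynomial, or rational function, in $m$ whose leading term is the one taken. The convergence argument above shows the limit exists and equals the leading-coefficient volume, which is consistent with the definition in the statement.

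\textbf{Assembly.} Note first that $\vol(\cP_0^{(s)})=1$ and $\vol(\cP_1^{(s)})=s$ under the conventions of \S\ref{subsec-ehrhart-setup}. Then
\[
\mathcal V_s(y)=1+s\,y+\sum_{r\ge1}\vol(\cP_{2r+1}^{(s)})y^{2r+1}+y^2\mathscr E_s(y^2).
\]
The odd part equals $y(\mathscr O_s(y^2)-(s+1))+sy=y(\mathscr O_s(y^2)-1)$, which matches the claimed identity. This mirrors the correction by $m$ in \eqref{eq-Gms-general}, since $m^{-1}\cdot m\to1$.

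The main obstacle is bookkeeping rather than substance: confirming that the degree and leading coefficient of each binomial in $m$ are as claimed. This holds even when $a_j^{(s,1)}$ vanishes for small $m$, because the degree in $m$ is still $2j$ whenever $s+1-j\ge1$, which is true for $j\le s$.
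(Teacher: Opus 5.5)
Your proposal is correct and follows essentially the same route as the paper. You read off the leading terms $A_j^{(s)}m^{2j}$ and $B_j^{(s)}m^{2j+1}$ of $a_j^{(s,m)}$ and $b_j^{(s,m)}$, pass to the coefficientwise limit in the rational formulas of Theorem~\ref{thm-Gm-general} using that every denominator has constant term $1$, identify the resulting coefficients with volumes as leading Ehrhart coefficients, and correct the auxiliary constant term $s+1$ to $\vol(\cP_1^{(s)})=s$. Your remark that each coefficient of a quotient $N/D$ with $D(0)=1$ is a polynomial in finitely many coefficients of $N$ and $D$ is simply a more explicit version of the paper's one-line justification for interchanging the limit with the quotient.
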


\begin{proof}
We take all limits coefficientwise in $z$.  First observe that, for each fixed
$j$,
\[
a_j^{(s,m)}
=\binom{(s+1-j)m+j+1}{2j}
=A_j^{(s)}m^{2j}+O(m^{2j-1}),
\]
and
\[
b_j^{(s,m)}
=\binom{(s+1-j)m+j+1}{2j+1}
=B_j^{(s)}m^{2j+1}+O(m^{2j}).
\]
Therefore, after substituting $z\mapsto z/m^2$, we have
\[
\Delta_{s,m}(z/m^2)
=\sum_{j=0}^{s}(-1)^j a_j^{(s,m)}\frac{z^j}{m^{2j}}
\longrightarrow
\sum_{j=0}^{s}(-1)^jA_j^{(s)}z^j
=\Delta_s^{\mathrm{vol}}(z).
\]
The denominator has constant term $1$ for every $m$, so taking reciprocals is
legitimate as a coefficientwise operation on formal power series.

For the even part, Theorem~\ref{thm-Gm-general} gives
\[
\mathcal E_m^{(s)}(z)
=
\frac{\sum_{j=0}^{s-1}(-1)^ja_{j+1}^{(s,m)}z^j}{\Delta_{s,m}(z)}.
\]
Thus
\begin{align*}
\frac{1}{m^2}\mathcal E_m^{(s)}(z/m^2)
&=
\frac{\sum_{j=0}^{s-1}(-1)^j
        a_{j+1}^{(s,m)}z^j/m^{2j+2}}
     {\Delta_{s,m}(z/m^2)} \\
&\longrightarrow
\frac{\sum_{j=0}^{s-1}(-1)^jA_{j+1}^{(s)}z^j}
     {\Delta_s^{\mathrm{vol}}(z)}.
\end{align*}
This is the asserted formula for $\mathscr E_s(z)$.

For the odd part, similarly,
\[
\mathcal O_m^{(s)}(z)
=
\frac{\sum_{j=0}^{s}(-1)^jb_j^{(s,m)}z^j}{\Delta_{s,m}(z)},
\]
and hence
\begin{align*}
\frac{1}{m}\mathcal O_m^{(s)}(z/m^2)
&=
\frac{\sum_{j=0}^{s}(-1)^j
        b_j^{(s,m)}z^j/m^{2j+1}}
     {\Delta_{s,m}(z/m^2)} \\
&\longrightarrow
\frac{\sum_{j=0}^{s}(-1)^jB_j^{(s)}z^j}
     {\Delta_s^{\mathrm{vol}}(z)}.
\end{align*}
This proves the formula for $\mathscr O_s(z)$.

It remains to identify the assembled series with the volume generating
function.  For $r\ge0$,
\[
[z^r]\mathscr E_s(z)=\vol(\cP_{2r+2}^{(s)}),
\]
because $[z^r]\mathcal E_m^{(s)}(z)=L_{2r+2}^{(s)}(m)$ and
$L_{2r+2}^{(s)}(m)/m^{2r+2}$ tends to the Euclidean volume of
$\cP_{2r+2}^{(s)}$.  Likewise, for $r\ge1$,
\[
[z^r]\mathscr O_s(z)=\vol(\cP_{2r+1}^{(s)}).
\]
For $r=0$, however, the constant term of $\mathcal O_m^{(s)}$ is the auxiliary
quantity $(s+1)m+1$, so $[z^0]\mathscr O_s(z)=s+1$.  The actual
one-dimensional volume is $\vol(\cP_1^{(s)})=s$.  This explains the correction
term in
\[
\mathcal V_s(y)=1+y(\mathscr O_s(y^2)-1)+y^2\mathscr E_s(y^2).
\]
The constant term $1$ is the volume of the zero-dimensional convention.  Hence
the displayed expression gives exactly
$\sum_{d\ge0}\vol(\cP_d^{(s)})y^d$.
\end{proof}

\subsubsection*{Bivariate master identity and low-degree $h^*$-formulas}

\begin{proposition}[Bivariate master identity]
\label{prop-H-master}
Define the two-variable generating function
$\mathcal H_s(y,z):=\sum_{k\ge0}H_k^{(s)}(y)z^k=\sum_{d\ge0}h_d^{(s)}(z)\,y^d$,
where $h_d^{(s)}(z):=\sum_{k=0}^{d}h_{d,k}^{(s)}z^k$.
Then
\begin{equation}\label{eq-H-master}
\mathcal H_s(y,z)
=(1-z)\sum_{m\ge0}G_m^{(s)}\bigl((1-z)y\bigr)z^m.
\end{equation}
Equivalently,
$H_k^{(s)}(y)=\bigl[z^k\bigr]\bigl((1-z)\sum_{m\ge0}G_m^{(s)}((1-z)y)z^m\bigr)$.
\end{proposition}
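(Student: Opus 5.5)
The plan is to prove \eqref{eq-H-master} coefficientwise in $y^d$, i.e.\ to reduce it to the classical Ehrhart relation for each fixed dimension $d$. All series are treated as formal power series in $(y,z)$, so no convergence issues arise.

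First expand the right-hand side. Since $G_m^{(s)}(y)=\sum_{d\ge0}L_d^{(s)}(m)y^d$, we have
\[
(1-z)\sum_{m\ge0}G_m^{(s)}\bigl((1-z)y\bigr)z^m
=\sum_{d\ge0}y^d(1-z)^{d+1}\sum_{m\ge0}L_d^{(s)}(m)z^m.
\]
This rearrangement is legitimate because, for a fixed monomial $y^dz^k$, only the finitely many terms with that $d$ and $m\le k$ contribute.

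Next, for each fixed $d$, identify $\sum_{m\ge0}L_d^{(s)}(m)z^m$ with the Ehrhart series. For $d\ge2$ we have $L_d^{(s)}(0)=1$, since the zeroth dilate is the origin. Hence this sum equals $\Ehr_{\cP_d^{(s)}}(z)=h_d^{(s)}(z)/(1-z)^{d+1}$ by \eqref{eq-Ehr-ds}, and multiplying by $(1-z)^{d+1}$ gives exactly $h_d^{(s)}(z)$. For the low-dimensional conventions:
\begin{itemize}
\item For $d=0$, $L_0^{(s)}\equiv1$ gives $(1-z)\cdot\frac1{1-z}=1=h_0^{(s)}(z)$.
\item For $d=1$, $L_1^{(s)}(m)=sm+1$ gives $\sum_m(sm+1)z^m=(1+(s-1)z)/(1-z)^2$. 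This yields $h_1^{(s)}(z)=1+(s-1)z$, matching the values recorded in the proof of Theorem~\ref{thm-hstar-extraction}.
\end{itemize}
Summing over $d$ gives $\sum_d h_d^{(s)}(z)y^d=\mathcal H_s(y,z)$. The equality of the two definitions of $\mathcal H_s$ is just the interchange of the finite sum over $k$ with the sum over $d$.

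The equivalent coefficient statement for $H_k^{(s)}$ follows by extracting $[z^k]$. As a consistency check, this can be compared with Theorem~\ref{thm-hstar-extraction}: expanding $(1-z)^{d+1}$ binomially reproduces Lemma~\ref{lem-hstar-inversion}.

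There is no real obstacle here. The only points needing care are the boundary dimensions $d=0,1$ and the $m=0$ term, where one must check that the conventions $L_0^{(s)}(0)=L_1^{(s)}(0)=L_d^{(s)}(0)=1$ agree with the constant term $1$ of each Ehrhart series. One should also check that the substitution $y\mapsto(1-z)y$ is a valid formal operation, which holds because it preserves the $y$-degree.
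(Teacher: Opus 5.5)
Your proposal is correct and follows the paper's proof essentially step for step. You expand $\sum_{m\ge0}G_m^{(s)}((1-z)y)z^m$ as $\sum_{d\ge0}y^d(1-z)^d\sum_{m\ge0}L_d^{(s)}(m)z^m$, identify each inner sum with $h_d^{(s)}(z)/(1-z)^{d+1}$ (using the $d=0,1$ conventions), and multiply by $1-z$; your explicit checks of the $d=0,1$ cases and of coefficientwise finiteness are just slightly more detailed than the paper's.
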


\begin{proof}
We regard all series as formal power series in $z$ whose coefficients are
formal power series in $y$.  Starting from the definition of $G_m^{(s)}$, we
have
\begin{align*}
\sum_{m\ge0}G_m^{(s)}\bigl((1-z)y\bigr)z^m
&=\sum_{m\ge0}\sum_{d\ge0}L_d^{(s)}(m)\bigl((1-z)y\bigr)^d z^m \\
&=\sum_{d\ge0}y^d(1-z)^d\sum_{m\ge0}L_d^{(s)}(m)z^m.
\end{align*}
For each fixed $d$, the inner sum is the Ehrhart series
\[
\sum_{m\ge0}L_d^{(s)}(m)z^m
=\frac{h_d^{(s)}(z)}{(1-z)^{d+1}},
\]
where for $d=0,1$ this follows from the low-dimensional conventions stated in
Subsection~\ref{subsec-ehrhart-setup}, and for $d\ge2$ it is exactly
\eqref{eq-Ehr-ds}.  Hence
\begin{align*}
\sum_{m\ge0}G_m^{(s)}\bigl((1-z)y\bigr)z^m
&=\sum_{d\ge0}y^d(1-z)^d\frac{h_d^{(s)}(z)}{(1-z)^{d+1}}\\
&=\frac{1}{1-z}\sum_{d\ge0}h_d^{(s)}(z)y^d
=\frac{\mathcal H_s(y,z)}{1-z}.
\end{align*}
Multiplying by $1-z$ gives \eqref{eq-H-master}.  Extracting the coefficient of
$z^k$ on both sides gives the final equivalent formulation for $H_k^{(s)}(y)$.
\end{proof}

\begin{proposition}[Explicit formulas for $H_k^{(s)}(y)$ in low degree]
\label{prop-Hk-low}
For every $s\ge1$,
\begin{align*}
H_0^{(s)}(y)
&= \frac{1}{1-y},\\[4pt]
H_1^{(s)}(y)
&= G_1^{(s)}(y)-\frac{1}{(1-y)^2},\\[4pt]
H_2^{(s)}(y)
&= G_2^{(s)}(y)-\frac{d}{dy}\!\Bigl(yG_1^{(s)}(y)\Bigr)+\frac{y}{(1-y)^3},\\[4pt]
H_3^{(s)}(y)
&= G_3^{(s)}(y)-\frac{d}{dy}\!\Bigl(yG_2^{(s)}(y)\Bigr)
   +\frac{y}{2}\frac{d^2}{dy^2}\!\Bigl(yG_1^{(s)}(y)\Bigr)-\frac{y^2}{(1-y)^4},\\[4pt]
H_4^{(s)}(y)
&= G_4^{(s)}(y)-\frac{d}{dy}\!\Bigl(yG_3^{(s)}(y)\Bigr)
   +\frac{y}{2}\frac{d^2}{dy^2}\!\Bigl(yG_2^{(s)}(y)\Bigr)
   -\frac{y^2}{6}\frac{d^3}{dy^3}\!\Bigl(yG_1^{(s)}(y)\Bigr)+\frac{y^3}{(1-y)^5}.
\end{align*}
\end{proposition}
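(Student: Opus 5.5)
The plan is to specialize Theorem~\ref{thm-hstar-extraction} to $k=0,1,2,3,4$. The only new input is the closed form of the $i=k$ term. That term involves $G_0^{(s)}(y)$, and all other terms are copied verbatim from \eqref{eq-Hk-extraction}.

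First compute $G_0^{(s)}$. For $m=0$ the dilate $0\cdot\cP_d^{(s)}$ is the single point $\{0\}$, so $L_d^{(s)}(0)=1$ for every $d\ge2$. The low-dimensional conventions give $L_0^{(s)}(0)=1$ and $L_1^{(s)}(0)=s\cdot0+1=1$. Hence $G_0^{(s)}(y)=\sum_{d\ge0}y^d=1/(1-y)$ and $yG_0^{(s)}(y)=y/(1-y)=-1+1/(1-y)$. As a cross-check, one can also read this off from \eqref{eq-Gms-general} at $m=0$, but the direct count is cleaner and avoids questions about whether Theorem~\ref{thm-Gm-general} is stated for $m\ge1$ only.

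Next evaluate the $i=k$ term $\frac{(-1)^k}{k!}y^{k-1}\frac{d^k}{dy^k}\bigl(yG_0^{(s)}(y)\bigr)$. For $k\ge1$ the constant $-1$ is killed by differentiation, and $\frac{d^k}{dy^k}(1-y)^{-1}=k!\,(1-y)^{-k-1}$. This gives $(-1)^k y^{k-1}/(1-y)^{k+1}$, which yields $-1/(1-y)^2$, $+y/(1-y)^3$, $-y^2/(1-y)^4$ and $+y^3/(1-y)^5$ for $k=1,2,3,4$. For $k=0$ the formula reads $H_0^{(s)}=y^{-1}\cdot yG_0^{(s)}=1/(1-y)$. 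This agrees with $h_{d,0}^{(s)}=1$ for all $d$, which is a sanity check.

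The remaining terms, for $0\le i<k$, are exactly $\frac{(-1)^i}{i!}y^{i-1}\frac{d^i}{dy^i}\bigl(yG_{k-i}^{(s)}\bigr)$. The $i=0$ term is $G_k^{(s)}$. The $i=1$ term is $-\frac{d}{dy}(yG_{k-1}^{(s)})$. The $i=2$ term is $\frac{y}{2}\frac{d^2}{dy^2}(yG_{k-2}^{(s)})$, and the $i=3$ term is $-\frac{y^2}{6}\frac{d^3}{dy^3}(yG_{k-3}^{(s)})$. Substituting these reproduces the five displayed formulas.

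The main point requiring care is $G_0^{(s)}$, specifically that the $d=0,1$ conventions give value $1$ at $m=0$. Beyond that the argument is routine bookkeeping of signs and factorials.
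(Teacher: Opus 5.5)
Your proposal is correct and follows essentially the same route as the paper. Both proofs specialize Theorem~\ref{thm-hstar-extraction} to $k\le4$, use $G_0^{(s)}(y)=1/(1-y)$ because $0\cdot\cP_d^{(s)}=\{0\}$ and the low-dimensional conventions give value $1$ at $m=0$, and evaluate the $i=k$ term as $(-1)^k y^{k-1}/(1-y)^{k+1}$.
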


\begin{proof}
We apply Theorem~\ref{thm-hstar-extraction} with $k=0,1,2,3,4$.  Recall that
$G_0^{(s)}(y)=\sum_{d\ge0}L_d^{(s)}(0)y^d=1/(1-y)$, because every dilated
polytope $0\cP_d^{(s)}$ consists of the origin.  The $i=k$ term in
\eqref{eq-Hk-extraction} therefore contributes
\[
(-1)^k\frac{1}{k!}y^{k-1}\frac{d^k}{dy^k}\left(\frac{y}{1-y}\right).
\]
For $k\ge1$,
\[
\frac{1}{k!}y^{k-1}\frac{d^k}{dy^k}\left(\frac{y}{1-y}\right)
=\frac{y^{k-1}}{(1-y)^{k+1}}.
\]
For $k=0$, the same formula is interpreted simply as $G_0^{(s)}(y)=1/(1-y)$.

Now substitute successively.
For $k=1$,
\[
H_1^{(s)}(y)=G_1^{(s)}(y)-\frac{d}{dy}\left(\frac{y}{1-y}\right)
=G_1^{(s)}(y)-\frac{1}{(1-y)^2}.
\]
For $k=2$,
\[
H_2^{(s)}(y)=G_2^{(s)}(y)-\frac{d}{dy}\bigl(yG_1^{(s)}(y)\bigr)
+\frac{y}{2}\frac{d^2}{dy^2}\left(\frac{y}{1-y}\right),
\]
and the last term equals $y/(1-y)^3$.  The cases $k=3$ and $k=4$ are obtained
in exactly the same way, with the signs alternating as in
\eqref{eq-Hk-extraction}.  This gives the five displayed formulas.
\end{proof}

\begin{corollary}[Coefficient formulas in low degree]
\label{cor-hdk-low}
For every $d\ge0$ and $s\ge1$,
\begin{align*}
h_{d,1}^{(s)} &=L_d^{(s)}(1)-(d+1),\\
h_{d,2}^{(s)} &=L_d^{(s)}(2)-(d+1)L_d^{(s)}(1)+\tbinom{d+1}{2},\\
h_{d,3}^{(s)} &=L_d^{(s)}(3)-(d+1)L_d^{(s)}(2)+\tbinom{d+1}{2}L_d^{(s)}(1)-\tbinom{d+1}{3},\\
h_{d,4}^{(s)} &=L_d^{(s)}(4)-(d+1)L_d^{(s)}(3)+\tbinom{d+1}{2}L_d^{(s)}(2)
  -\tbinom{d+1}{3}L_d^{(s)}(1)+\tbinom{d+1}{4}.
\end{align*}
More generally, $h_{d,k}^{(s)}=\sum_{m=0}^{k}(-1)^{k-m}\binom{d+1}{k-m}L_d^{(s)}(m)$.
\end{corollary}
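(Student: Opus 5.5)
The plan is to read Corollary~\ref{cor-hdk-low} off the inversion formula of Lemma~\ref{lem-hstar-inversion}, equivalently off the coefficient extraction in Theorem~\ref{thm-hstar-extraction}. For $d\ge2$, Lemma~\ref{lem-hstar-inversion} gives
\[
h_{d,k}^{(s)}=\sum_{i=0}^{k}(-1)^i\binom{d+1}{i}L_d^{(s)}(k-i).
\]
Reindexing with $m=k-i$ turns this into the general formula
\[
h_{d,k}^{(s)}=\sum_{m=0}^{k}(-1)^{k-m}\binom{d+1}{k-m}L_d^{(s)}(m).
\]
The four displayed cases are then its specializations to $k=1,2,3,4$. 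In each case I will use $L_d^{(s)}(0)=1$, since the zero dilate is just the origin, so the $m=0$ term contributes $(-1)^k\binom{d+1}{k}$. This is the source of the trailing constants $-(d+1)$, $+\binom{d+1}{2}$, $-\binom{d+1}{3}$ and $+\binom{d+1}{4}$.

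To justify the derivation of the inversion formula, I will expand
\[
h_d^{(s)}(z)=(1-z)^{d+1}\sum_{m\ge0}L_d^{(s)}(m)z^m
\]
and extract $[z^k]$ by the Cauchy product with $(1-z)^{d+1}=\sum_i(-1)^i\binom{d+1}{i}z^i$. This identity holds for all $k$, and it yields $0$ for $k>d$, matching the stated convention.

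Since the corollary asserts the formulas for every $d\ge0$, I must also handle the low-dimensional conventions $d=0$ and $d=1$, which Lemma~\ref{lem-hstar-inversion} does not cover. The clean route is the coefficientwise reading of Theorem~\ref{thm-hstar-extraction}, whose proof already verifies that the $y^d$ coefficient equals $h_{d,k}^{(s)}$ for $d=0,1$. Alternatively, the Cauchy-product argument above works verbatim once one notes $\sum_m L_0^{(s)}(m)z^m=1/(1-z)$ and $\sum_m(sm+1)z^m=(1+(s-1)z)/(1-z)^2$. Both are the Ehrhart-type identities with numerators $1$ and $1+(s-1)z$ recorded in that proof. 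Those numerators are $h_0^{(s)}(z)=1$ and $h_1^{(s)}(z)=1+(s-1)z$ by definition of the conventions, so the same extraction applies.

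There is no real obstacle. The only points needing care are the sign bookkeeping under the reindexing $i=k-m$ and remembering to include the $d=0,1$ cases via the conventions rather than Lemma~\ref{lem-hstar-inversion} itself.
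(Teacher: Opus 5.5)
Your proposal is correct and matches the paper's proof: for $d\ge2$ you reindex the inversion formula of Lemma~\ref{lem-hstar-inversion} by $m=k-i$, and for $d=0,1$ you use the conventions $L_0^{(s)}(m)=1$ and $L_1^{(s)}(m)=sm+1$, as in the proof of Theorem~\ref{thm-hstar-extraction}. The paper simply cites the inversion formula as standard, so your Cauchy-product derivation of it is a harmless extra step rather than a different route.
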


\begin{proof}
For $d\ge2$, these are the coefficient forms of
Lemma~\ref{lem-hstar-inversion}, after the change of variables $m=k-i$.
For $d=0,1$, the formulas follow directly from the conventions
$L_0^{(s)}(m)=1$ and $L_1^{(s)}(m)=sm+1$, as in the proof of
Theorem~\ref{thm-hstar-extraction}.
\end{proof}

\begin{corollary}[Rationality and denominator bound for $H_k^{(s)}(y)$]
\label{cor-Hk-denominator}
For every fixed $k\ge0$ and $s\ge1$, $H_k^{(s)}(y)$ is a rational
function of $y$, and with $D_{s,m}(y):=\Delta_{s,m}(y^2)$ for $m\ge1$,
one may take the denominator of $H_k^{(s)}(y)$ to divide
\begin{equation}\label{eq-Hk-denominator-bound}
(1-y)^{k+1}\prod_{m=1}^{k}D_{s,m}(y)^{k-m+1}.
\end{equation}
\end{corollary}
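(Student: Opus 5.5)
The plan is to read the statement off the extraction formula of Theorem~\ref{thm-hstar-extraction},
\[
H_k^{(s)}(y)=\sum_{i=0}^{k}\frac{(-1)^i}{i!}\,y^{i-1}\frac{d^i}{dy^i}\Bigl(y\,G_{k-i}^{(s)}(y)\Bigr),
\]
and to track the denominator of each summand separately. The first task is to record the denominator of each $G_m^{(s)}$.

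For $m=0$ we have $G_0^{(s)}(y)=1/(1-y)$, as in the proof of Proposition~\ref{prop-Hk-low}. For $m\ge1$, Theorem~\ref{thm-Gm-general} gives
\[
G_m^{(s)}(y)=1+y\bigl(\mathcal O_m^{(s)}(y^2)-m\bigr)+y^2\mathcal E_m^{(s)}(y^2).
\]
Both $\mathcal O_m^{(s)}$ and $\mathcal E_m^{(s)}$ are rational with the common denominator $\Delta_{s,m}(z)$. Hence $G_m^{(s)}(y)=N_m(y)/D_{s,m}(y)$, where $N_m$ is a polynomial and $D_{s,m}(y)=\Delta_{s,m}(y^2)$. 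Since $\Delta_{s,m}(0)=1$, this quotient is a legitimate formal power series. In particular every $H_k^{(s)}$ is a finite sum of derivatives of rational functions, times powers $y^{i-1}$, so it is rational.

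Next I bound the denominator of each term. If $f=N/D$, then $\frac{d^i}{dy^i}(yf)$ can be written over the denominator $D^{i+1}$. This follows by induction: differentiating $g/D^{j}$ gives $(g'D-jgD')/D^{j+1}$.

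For the term with index $i$, set $m=k-i$.
\begin{itemize}
\item If $m\ge1$, the term has denominator dividing $D_{s,m}^{\,i+1}=D_{s,m}^{\,k-m+1}$.
\item If $m=0$ (that is, $i=k$), the term is a derivative of $y/(1-y)$. By the explicit computation in Proposition~\ref{prop-Hk-low} it equals $\pm y^{k-1}/(1-y)^{k+1}$, with denominator $(1-y)^{k+1}$.
\end{itemize}

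The factor $y^{i-1}$ needs care when $i=0$. That term is $y^{-1}\cdot yG_k^{(s)}=G_k^{(s)}$, so no genuine factor of $y$ enters any denominator. For $i\ge1$ the factor $y^{i-1}$ is polynomial.

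Summing over $i$, the common denominator divides the least common multiple of $(1-y)^{k+1}$ and $D_{s,m}^{\,k-m+1}$ for $1\le m\le k$. This in turn divides the product $(1-y)^{k+1}\prod_{m=1}^{k}D_{s,m}(y)^{k-m+1}$, which is the bound \eqref{eq-Hk-denominator-bound}.

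The main point to get right is the phrase ``one may take the denominator to divide.'' It should be read as the statement that $H_k^{(s)}$ times this product is a polynomial. The argument above establishes exactly this, without any claim about coprimality or lowest terms. The $D_{s,m}$ for different $m$ may share factors, but that only makes the product a larger admissible denominator, so it does no harm.
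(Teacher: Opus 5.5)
Your argument is correct and is essentially the paper's proof. Both read the bound off the extraction formula of Theorem~\ref{thm-hstar-extraction}, using that $G_m^{(s)}$ has denominator $D_{s,m}(y)$ for $m\ge1$ (from Theorem~\ref{thm-Gm-general}), that $G_0^{(s)}=1/(1-y)$, and that each of the $k-m$ differentiations of the $G_m^{(s)}$-term raises the denominator exponent by at most one. One small point: your closed form $\pm y^{k-1}/(1-y)^{k+1}$ for the $i=k$ term holds only for $k\ge1$, but for $k=0$ that term is just $G_0^{(s)}=1/(1-y)$, which your separate $i=0$ treatment (or your general $D^{i+1}$ bound) already covers.
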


\begin{proof}
Rationality follows from Theorem~\ref{thm-hstar-extraction} together with
Theorem~\ref{thm-Gm-general} and the identity $G_0^{(s)}(y)=1/(1-y)$.
For the denominator bound: the term involving
$G_m^{(s)}$ in \eqref{eq-Hk-extraction} is differentiated exactly $k-m$
times. If $F(y)/D(y)^r$ is differentiated once, the result has denominator
$D(y)^{r+1}$, so each differentiation raises the denominator exponent by at
most $1$. Since $G_m^{(s)}$ has denominator $D_{s,m}(y)$ for $m\ge1$ and
$G_0^{(s)}(y)=1/(1-y)$, the bound \eqref{eq-Hk-denominator-bound} follows.
\end{proof}

\begin{remark}[On the possibility of a closed form]
\label{rem-Hk-closed-form}
Proposition~\ref{prop-H-master} is the cleanest general formula known for
$\{H_k^{(s)}(y)\}_{k\ge0}$. Unlike $\Phi_s$ and $\Psi_s$, a fixed
$H_k^{(s)}(y)$ mixes the varying denominators $\Delta_{s,m}(y^2)$ for
$1\le m\le k$, so no single-denominator formula is apparent. Any future
closed form for the bivariate series $\mathcal H_s(y,z)$ would immediately
yield simultaneous formulas for all $H_k^{(s)}(y)$.
\end{remark}

\subsection{Specialization to \texorpdfstring{$s=1$}{s=1}: explicit formulas and Gorenstein proof}
\label{subsec-s1}

We now specialize to $s=1$, playing the role that the small-cases example
(\S\ref{subsec-small-cases}) played in Section~\ref{sec-2}: it verifies all
general formulas concretely, gives the simplest closed forms, and yields the
Gorenstein proof.

\subsubsection*{Setup}

Set $\mathcal T_m:=\{(a,b)\in\R_{\ge0}^2:a+b\le m\}$ (so $\mathcal T_m=m\mathcal T_1$),
$\Gamma_m:=\{(a,b,c)\in\R_{\ge0}^3:a+b\le m,\,b+c\le2m\}$, and
\[
\Lambda_m:=\#(\mathcal T_m\cap\Z^2)=\binom{m+2}{2},
\qquad
\Theta_m:=\#(\Gamma_m\cap\Z^3)=\frac{(m+1)(m+2)(5m+3)}{6}.
\]
The formula for $\Theta_m$ follows from $\Theta_m=\sum_{b=0}^m(m-b+1)(2m-b+1)$.

\subsubsection*{The closed-form formula for $G_m^{(1)}(y)$}

\begin{proposition}[Closed form for $G_m^{(1)}$]\label{prop-Gm-s1}
For every integer $m\ge0$,
\begin{equation}\label{eq-Gm-s1-closed}
G_m^{(1)}(y)
=\frac{1+(m+1)y+\dfrac{m(m+1)(m+2)}{3}\,y^3}{1-\binom{m+2}{2}y^2}.
\end{equation}
\end{proposition}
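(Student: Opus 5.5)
The plan is to specialize Theorem~\ref{thm-Gm-general} to $s=1$ for $m\ge1$ and treat $m=0$ separately. For $s=1$ only $j=0,1$ occur in \eqref{eq-ajbj-def}. This gives
\[
a_0^{(1,m)}=1,\quad a_1^{(1,m)}=\binom{m+2}{2},\quad b_0^{(1,m)}=2m+1,\quad b_1^{(1,m)}=\binom{m+2}{3}.
\]
Hence $\Delta_{1,m}(z)=1-\binom{m+2}{2}z$. The parity series are
\[
\mathcal E_m^{(1)}(z)=\frac{\binom{m+2}{2}}{1-\binom{m+2}{2}z},
\qquad
\mathcal O_m^{(1)}(z)=\frac{(2m+1)-\binom{m+2}{3}z}{1-\binom{m+2}{2}z}.
\]
(As a sanity check, this matches the geometric picture $\cP_{2r}^{(1)}\cong\mathcal T_1^r$, which gives $L_{2r}(m)=\Lambda_m^r$.)

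Next I substitute into \eqref{eq-Gms-general}, $G_m^{(1)}(y)=1+y(\mathcal O_m^{(1)}(y^2)-m)+y^2\mathcal E_m^{(1)}(y^2)$, and put everything over the common denominator $1-\Lambda_m y^2$ with $\Lambda_m=\binom{m+2}{2}$. The numerator is
\[
(1-\Lambda_m y^2)+y\bigl((2m+1)-\tbinom{m+2}{3}y^2-m(1-\Lambda_m y^2)\bigr)+\Lambda_m y^2 .
\]
The $y^2$ terms cancel. The $y$ coefficient is $(2m+1)-m=m+1$. The $y^3$ coefficient is $m\Lambda_m-\binom{m+2}{3}$, and I will check by direct algebra that it equals
\[
\frac{(m+2)(m+1)}{6}\,(3m-m)=\frac{m(m+1)(m+2)}{3}.
\]
This yields \eqref{eq-Gm-s1-closed}.

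For $m=0$, Theorem~\ref{thm-Gm-general} is not stated, so I argue directly. We have $G_0^{(1)}(y)=1/(1-y)$, as noted in the proof of Proposition~\ref{prop-Hk-low}. The right side of \eqref{eq-Gm-s1-closed} at $m=0$ is $(1+y)/(1-y^2)=1/(1-y)$, so the two agree.

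The only delicate point is bookkeeping in low dimensions. The auxiliary constant $(s+1)m+1$ in $\mathcal O_m$ must be corrected by $-m$ to give $L_1^{(1)}(m)=m+1$, and this correction is built into \eqref{eq-Gms-general}. To guard against off-by-one errors, I will cross-check the $y^3$ coefficient against $\Theta_m=L_3^{(1)}(m)$. Expanding the closed form gives $(m+1)\Lambda_m+\tfrac{m(m+1)(m+2)}{3}$, which should equal $(m+1)(m+2)(5m+3)/6$. Factoring out $(m+1)(m+2)/6$, this reduces to $3(m+1)+2m=5m+3$, which holds.
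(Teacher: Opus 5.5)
Your proposal is correct and is essentially the paper's second proof: specialize Theorem~\ref{thm-Gm-general} to $s=1$, substitute into \eqref{eq-Gms-general}, and simplify the $y^3$ coefficient $m\tbinom{m+2}{2}-\tbinom{m+2}{3}=\tfrac{m(m+1)(m+2)}{3}$. Your separate check of $m=0$ via $G_0^{(1)}(y)=1/(1-y)$ is a worthwhile addition, because that theorem is stated only for $m\ge1$; the paper's first, geometric proof instead covers all $m\ge0$ through the product decompositions $\mathcal T_m^r$ and $\mathcal T_m^{r-1}\times\Gamma_m$.
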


We give two proofs: the first is geometric and provides the product
decomposition needed for the Gorenstein theorem; the second verifies
consistency with Theorem~\ref{thm-Gm-general}.

\begin{proof}[First proof: geometric decomposition]
\noindent\textit{Even dimensions $d=2r$, $r\ge1$.}
In $m\cP_{2r}^{(1)}$, each even-indexed constraint $x_{2j}+x_{2j+1}\le2m$
($j=1,\ldots,r-1$) is redundant: the odd constraints to its left and right
force $x_{2j}\le m$ and $x_{2j+1}\le m$, whence the even constraint holds.
The last constraint is odd ($x_{2r-1}+x_{2r}\le m$), so the active system
is exactly $\mathcal T_m^r$, giving $L_{2r}^{(1)}(m)=\Lambda_m^r$.

\noindent\textit{Odd dimensions $d=2r+1$, $r\ge1$.}
The same redundancy applies to all even-indexed constraints except the last
($x_{2r}+x_{2r+1}\le2m$, which has no right neighbor). The active system is
$\mathcal T_m^{r-1}\times\Gamma_m$, giving $L_{2r+1}^{(1)}(m)=\Theta_m\Lambda_m^{r-1}$.

Combining with $L_0^{(1)}(m)=1$ and $L_1^{(1)}(m)=m+1$:
\[
G_m^{(1)}(y)=1+(m+1)y+\frac{\Lambda_m y^2+\Theta_m y^3}{1-\Lambda_m y^2}.
\]
The identity $\Theta_m-(m+1)\Lambda_m=m(m+1)(m+2)/3$ (verified by direct
expansion) yields \eqref{eq-Gm-s1-closed}.
\end{proof}

\begin{proof}[Second proof: specialization of Theorem~\ref{thm-Gm-general}]
For $s=1$, the coefficients of Theorem~\ref{thm-Gm-general} reduce to
\[
a_0^{(1,m)}=1,\quad a_1^{(1,m)}=\tbinom{m+2}{2},\quad
b_0^{(1,m)}=2m+1,\quad b_1^{(1,m)}=\tbinom{m+2}{3},
\]
so $\Delta_{1,m}(z)=1-\binom{m+2}{2}z$ and
\[
\mathcal E_m^{(1)}(z)=\frac{\tbinom{m+2}{2}}{1-\tbinom{m+2}{2}z},
\qquad
\mathcal O_m^{(1)}(z)=\frac{(2m+1)-\tbinom{m+2}{3}z}{1-\tbinom{m+2}{2}z}.
\]
Substituting into $G_m^{(1)}(y)=1+y(\mathcal O_m^{(1)}(y^2)-m)+y^2\mathcal E_m^{(1)}(y^2)$
and simplifying yields \eqref{eq-Gm-s1-closed}.
\end{proof}

\subsubsection*{Geometric consequences}

\begin{corollary}[Lattice-point counts and volumes for $s=1$]\label{cor-volume}
For every integer $r\ge1$:
\begin{enumerate}[label=\upshape(\alph*)]
\item $N_{2r}(1)=3^r$ and $N_{2r+1}(1)=8\cdot3^{r-1}$.
\item $(2r)!\,\vol(\cP_{2r}^{(1)})=(2r)!/2^r$ and
$(2r+1)!\,\vol(\cP_{2r+1}^{(1)})=5(2r+1)!/(6\cdot2^{r-1})$.
\end{enumerate}
\end{corollary}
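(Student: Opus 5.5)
The plan is to read everything off the product decompositions established in the first proof of Proposition~\ref{prop-Gm-s1}. That proof shows $L_{2r}^{(1)}(m)=\Lambda_m^r$ and $L_{2r+1}^{(1)}(m)=\Theta_m\Lambda_m^{r-1}$ for all $m\ge0$ and $r\ge1$.

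For part (a), I will set $m=1$, which gives $\Lambda_1=\binom{3}{2}=3$ and $\Theta_1=\frac{2\cdot3\cdot8}{6}=8$. Hence $N_{2r}(1)=L_{2r}^{(1)}(1)=3^r$ and $N_{2r+1}(1)=8\cdot3^{r-1}$. As a consistency check, this agrees with the partial-fraction expansions of $\Phi_1(y)=(3-y)/(1-3y)$ and $\Psi_1(y)=3/(1-3y)$ in Remark~\ref{rem-euler}. Alternatively, one can expand $G_1^{(1)}(y)=(1+2y+2y^3)/(1-3y^2)$ directly.

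For part (b), I will use that the Euclidean volume is the leading coefficient of the Ehrhart polynomial, i.e.\ $\vol(\cP)=\lim_{m\to\infty}L_{\cP}(m)/m^{\dim\cP}$. The polynomial $\Lambda_m=\binom{m+2}{2}$ has leading term $m^2/2$, and $\Theta_m=(m+1)(m+2)(5m+3)/6$ has leading term $5m^3/6$. Therefore $\vol(\cP_{2r}^{(1)})=2^{-r}$ and $\vol(\cP_{2r+1}^{(1)})=\frac56\cdot2^{-(r-1)}$. Multiplying by $(2r)!$ and $(2r+1)!$ respectively gives the stated normalized volumes.

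Geometrically this is just $\vol(\mathcal T_1)=1/2$ together with $\vol(\Gamma_1)=\int_0^1(1-b)(2-b)\,db=5/6$, combined with multiplicativity of volume under Cartesian products. I can present either argument. A cross-check is available via Corollary~\ref{cor-vol-general} with $s=1$: there $\Delta^{\mathrm{vol}}_1(z)=1-2z$, so $\mathscr E_1(z)=2/(1-2z)$, which appears at first to conflict with $\vol(\mathcal T_1^r)=2^{-r}$. This needs care, since $A_1^{(1)}=1/2$ gives $\mathscr E_1=\frac{1/2}{1-z/2}$, which does agree. I would include this check only briefly.

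The only step with real content is justifying the product decompositions for every dilation $m$, not just $m=1$. The redundancy argument (odd constraints force $x_{2j},x_{2j+1}\le m$) holds verbatim for $m\mathcal T_1$. Since that argument was already given in Proposition~\ref{prop-Gm-s1}, I expect no genuine obstacle. The main risk is bookkeeping: indexing $r-1$ versus $r$ in the odd case, and keeping track of the leading coefficient $5/6$ of $\Theta_m$.
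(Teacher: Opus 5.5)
Your proposal is correct and matches the paper's proof. Part (a) is the $m=1$ specialization $\Lambda_1=3$, $\Theta_1=8$ of the product formulas. Part (b) rests on the decompositions $\cP_{2r}^{(1)}=\mathcal T_1^r$ and $\cP_{2r+1}^{(1)}=\mathcal T_1^{r-1}\times\Gamma_1$, with $\vol(\mathcal T_1)=1/2$ and $\vol(\Gamma_1)=5/6$; reading off the leading coefficients of $\Lambda_m^r$ and $\Theta_m\Lambda_m^{r-1}$ is the same computation. If you keep the optional cross-check, state it directly as $\Delta_1^{\mathrm{vol}}(z)=1-z/2$ and $\mathscr E_1(z)=\tfrac{1/2}{1-z/2}$, and drop the $1-2z$ false start.
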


\begin{proof}
For (a), specialize the product formulas in the first proof of
Proposition~\ref{prop-Gm-s1} to $m=1$.  Since
\[
\Lambda_1=\#(\mathcal T_1\cap\Z^2)=\binom{3}{2}=3,
\qquad
\Theta_1=\#(\Gamma_1\cap\Z^3)=\frac{2\cdot3\cdot8}{6}=8,
\]
we get
\[
N_{2r}(1)=L_{2r}^{(1)}(1)=\Lambda_1^r=3^r,
\]
and, for $r\ge1$,
\[
N_{2r+1}(1)=L_{2r+1}^{(1)}(1)=\Theta_1\Lambda_1^{r-1}=8\cdot3^{r-1}.
\]

For (b), use the geometric decompositions
\[
\cP_{2r}^{(1)}=\mathcal T_1^r,
\qquad
\cP_{2r+1}^{(1)}=\mathcal T_1^{r-1}\times\Gamma_1
\]
from the first proof of Proposition~\ref{prop-Gm-s1}.  Volumes multiply under
Cartesian products.  Since $\vol(\mathcal T_1)=1/2$ and
\[
\vol(\Gamma_1)=\int_0^1(1-b)(2-b)\,db=\frac56,
\]
we obtain
\[
\vol(\cP_{2r}^{(1)})=2^{-r},
\qquad
\vol(\cP_{2r+1}^{(1)})=\frac56\,2^{-(r-1)}.
\]
Multiplying by $(2r)!$ and $(2r+1)!$, respectively, gives the normalized
volumes in the statement.
\end{proof}

\begin{remark}\label{rem-product}
The product rule $L_{2r}^{(1)}(m)=L_{\mathcal T_1}(m)^r$ follows because the
Ehrhart polynomial of a Cartesian product is the product of the factors'
Ehrhart polynomials. The normalized volumes
$(2r)!/2^r=1,6,90,2520,\ldots$ equal $(2r-1)!!\cdot r!$.
\end{remark}

\subsubsection*{The \texorpdfstring{$h^*$}{h*}-vectors and the Gorenstein proof}

Applying Proposition~\ref{prop-Hk-low} to the closed form \eqref{eq-Gm-s1-closed}
yields explicit generating functions for the $h^*$-coefficients.

\begin{corollary}[$h^*$-coefficient generating functions for $s=1$]\label{cor-Hk-s1}
For $s=1$, the first four series are
\begin{align*}
H_0^{(1)}(y) &= \frac{1}{1-y},\\[4pt]
H_1^{(1)}(y) &= \frac{2y^3(y^2-2y+2)}{(1-y)^2(1-3y^2)},\\[4pt]
H_2^{(1)}(y) &= \frac{y^4(1+24y-42y^2+36y^3-39y^4)}{(1-y)^3(1-3y^2)^2(1-6y^2)},\\[4pt]
H_3^{(1)}(y) &= \frac{4y^5\mathcal{N}_3(y)}{(1-y)^4(1-3y^2)^3(1-6y^2)^2(1-10y^2)},
\end{align*}
where $\mathcal{N}_3(y)$ is the degree-$10$ polynomial
\begin{align*}
\mathcal{N}_3(y) &=
1+y+73y^2-185y^3+38y^4-84y^5\\
&\quad-1077y^6+4014y^7-3717y^8+1944y^9-1458y^{10}.
\end{align*}
\end{corollary}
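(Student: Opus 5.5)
The plan is to substitute the closed form of Proposition~\ref{prop-Gm-s1} into the formulas of Proposition~\ref{prop-Hk-low} for $k=0,1,2,3$, and then simplify each result to a single rational function. The only inputs are
\[
G_0^{(1)}(y)=\frac{1}{1-y},\qquad
G_1^{(1)}(y)=\frac{1+2y+2y^3}{1-3y^2},
\]
\[
G_2^{(1)}(y)=\frac{1+3y+8y^3}{1-6y^2},\qquad
G_3^{(1)}(y)=\frac{1+4y+20y^3}{1-10y^2}.
\]
These come from \eqref{eq-Gm-s1-closed} with $\binom{m+2}{2}=3,6,10$ and $m(m+1)(m+2)/3=2,8,20$. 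The case $k=0$ is immediate.

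For $k=1$, I will compute $G_1^{(1)}(y)-(1-y)^{-2}$ over the common denominator $(1-y)^2(1-3y^2)$. The numerator is $(1+2y+2y^3)(1-2y+y^2)-(1-3y^2)$, which should collapse to $2y^3(2-2y+y^2)$; the low-order cancellation is expected because $h_{d,1}^{(1)}=N_d(1)-(d+1)$ vanishes for $d=0,1,2$.

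For $k=2$, I will differentiate $yG_1^{(1)}(y)=(y+2y^2+2y^4)/(1-3y^2)$ once, giving a denominator of $(1-3y^2)^2$. I will then combine it with $G_2^{(1)}$ and $y/(1-y)^3$ over $(1-y)^3(1-3y^2)^2(1-6y^2)$ and check that the numerator equals $y^4(1+24y-42y^2+36y^3-39y^4)$.

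The case $k=3$ is the same procedure with one more layer. It needs $\frac{d}{dy}(yG_2^{(1)})$, with denominator $(1-6y^2)^2$, and $\frac{y}{2}\frac{d^2}{dy^2}(yG_1^{(1)})$, with denominator $(1-3y^2)^3$. The common denominator is therefore the one displayed in the statement, consistent with the bound in Corollary~\ref{cor-Hk-denominator}.

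The main obstacle is the bookkeeping in the $k=3$ numerator, a polynomial of degree about $15$. I would organise it by clearing denominators term by term and collecting coefficients. As an independent check I would also verify the claimed leading factor $y^5$ and the first several coefficients directly from Corollary~\ref{cor-hdk-low}. These coefficients are computed using $L_{2r}^{(1)}(m)=\Lambda_m^r$ and $L_{2r+1}^{(1)}(m)=\Theta_m\Lambda_m^{r-1}$, which give $h_{d,k}^{(1)}$ for small $d$.

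Since both sides are rational with a known denominator, agreement of enough initial Taylor coefficients then confirms the identity. Specifically, the number of coefficients needed exceeds the degree of the numerator-difference after clearing denominators, and the same check applies to $k=1,2$. The vanishing of $h_{d,3}^{(1)}$ for $d\le 4$ is automatic, because the $h^*$-degree of the product polytope $\mathcal T_1^r$ is $r$.
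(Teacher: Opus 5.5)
Your plan is the paper's proof: substitute the $m=0,1,2,3$ cases of \eqref{eq-Gm-s1-closed} into Proposition~\ref{prop-Hk-low} and simplify. Your inputs $G_m^{(1)}$ and your $k=1$ numerator are correct, and clearing denominators for $k=2,3$ gives exactly the stated numerators (the terms above $y^8$, resp.\ $y^{15}$, cancel).

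One slip in your optional cross-check. The $h^*$-degree of $\mathcal T_1^r$ is $2r-2$, not $r$; the two agree only at $r=2$. Also, $d=3$ corresponds to $\Gamma_1$, not to a power of $\mathcal T_1$. The vanishing $h^{(1)}_{d,3}=0$ for $d\le 4$ that you need still holds, for example because $\Gamma_1$ has no interior lattice points.
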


\begin{proof}
Substitute the closed form for $G_m^{(1)}(y)$ from
Proposition~\ref{prop-Gm-s1} into the general formulas of
Proposition~\ref{prop-Hk-low} for $k=0,1,2,3$, and then simplify the resulting
rational functions.  No additional geometric input is used here; the corollary
is only the specialization of Proposition~\ref{prop-Hk-low} to $s=1$.
\end{proof}

\begin{remark}
The same substitution also gives a formula for $H_4^{(1)}(y)$, but its
numerator is lengthy.  Since the Gorenstein proof below uses the product
decomposition of $\cP_{2r}^{(1)}$ rather than this long rational function, we
omit the full expression.
\end{remark}

For reference, the same Ehrhart inversion computation gives the small-dimensional data in Table~\ref{tab-hstar-s1}.

\begin{table}[htbp]
\centering
\caption{The $h^*$-vectors of $\cP_d^{(1)}$ for $2\le d\le8$, computed
from the Ehrhart inversion formula \eqref{eq-hstar-inversion}.  The displayed
rows are consistency checks for the formulas above; the Gorenstein result is
proved independently from the product decomposition.  All unspecified
coefficients are zero.}
\label{tab-hstar-s1}
\smallskip
\begin{tabular}{c|cccccccc|c}
\toprule
$d$ & $h_{d,0}^{(1)}$ & $h_{d,1}^{(1)}$ & $h_{d,2}^{(1)}$ & $h_{d,3}^{(1)}$ & $h_{d,4}^{(1)}$ & $h_{d,5}^{(1)}$ & $h_{d,6}^{(1)}$ & $h_{d,7}^{(1)}$ & $\sum_k h_{d,k}^{(1)}$ \\
\midrule
2 & 1 & 0 & 0 & 0 & 0 & 0 & 0 & 0 & 1   \\
3 & 1 & 4 & 0 & 0 & 0 & 0 & 0 & 0 & 5   \\
4 & 1 & 4 & 1 & 0 & 0 & 0 & 0 & 0 & 6   \\
5 & 1 & 18 & 27 & 4 & 0 & 0 & 0 & 0 & 50  \\
6 & 1 & 20 & 48 & 20 & 1 & 0 & 0 & 0 & 90  \\
7 & 1 & 64 & 388 & 472 & 121 & 4 & 0 & 0 & 1050\\
8 & 1 & 72 & 603 & 1168 & 603 & 72 & 1 & 0 & 2520\\
\bottomrule
\end{tabular}
\end{table}

Several structural features are worth recording.

\begin{enumerate}[label=(\alph*)]
\item \textit{Nonnegativity}: All $h^*$-coefficients are nonneg\-ative, as
guaranteed by Stanley's theorem \cite{Stanley1978}. The data suggests
$h_{d,k}^{(1)}>0$ whenever $0\le k\le d-1$ and $d\ge3$.

\item \textit{Palindromicity in even dimensions}: The nonzero $h^*$-vectors for
$d=4,6,8$ are palindromic:
$(1,4,1)$, $(1,20,48,20,1)$, $(1,72,603,1168,603,72,1)$.
These are fully explained by Theorem~\ref{thm-gorenstein-s1}.

\item \textit{Non-palindromicity in odd dimensions}: Consistent with the
asymmetry $\cP_{2r+1}^{(1)}=\mathcal T_1^{r-1}\times\Gamma_1$ where
$\Gamma_1\not\cong \mathcal T_1$.

\item \textit{OEIS match}: For $d=4,6,8$ the $h^*$-vectors appear as initial
rows of OEIS A154283 \cite{OEIS}.
\end{enumerate}

\begin{proof}[Proof of Theorem~\ref{thm-gorenstein-s1}]
The first proof of Proposition~\ref{prop-Gm-s1} gives the set identity
$\cP_{2r}^{(1)}=\mathcal T_1^r$: when $s=1$, all even-indexed constraints are
redundant in even ambient dimension, and the only active constraints are the
$r$ inequalities $x_{2j-1}+x_{2j}\le1$ ($1\le j\le r$), which describe
$\mathcal T_1^r$ exactly. It therefore suffices to prove $\mathcal T_1^r$ is
Gorenstein.  We use the standard interior-lattice-point characterization of
Gorenstein lattice polytopes; see, for example, \cite{hibi-1,Bruns}.

A lattice point $(u,v)$ lies in $\operatorname{int}((n+3)\mathcal T_1)$ iff
$u,v\ge1$ and $u+v\le n+2$, i.e., $(u-1,v-1)\in n\mathcal T_1\cap\Z^2$.
Hence $\operatorname{int}((n+3)\mathcal T_1)\cap\Z^2=(1,1)+(n\mathcal T_1\cap\Z^2)$
for all $n\ge0$, so $\mathcal T_1$ is Gorenstein of index~$3$.

Setting $c_r:=((1,1),\ldots,(1,1))\in\Z^{2r}$, the factorization of interior
and lattice points across Cartesian products gives
$\operatorname{int}((n+3)\mathcal T_1^r)\cap\Z^{2r}=c_r+(n\mathcal T_1^r\cap\Z^{2r})$
for all $n\ge0$. Hence $\mathcal T_1^r$ is Gorenstein of index~$3$, and so is
$\cP_{2r}^{(1)}=\mathcal T_1^r$.
\end{proof}

\begin{remark}[Non-Gorenstein behavior for $s\ge2$]\label{rem-not-gorenstein}
Theorem~\ref{thm-gorenstein-s1} is genuinely exceptional: by
Corollary~\ref{cor-not-gorenstein-all-s}, for every $s\ge2$ at least one
even dimension fails to be Gorenstein. The computed $h^*$-vectors include:
\begin{center}
\begin{tabular}{c|cccc}
\toprule
 & $d=2$ & $d=4$ & $d=6$ & $d=8$ \\
\midrule
$s=2$ & $[1,3]$ & $[1,30,55,9]$ & $[1,197,1818,2786,811,27]$ & $\cdots$ \\
$s=3$ & $[1,7,1]^{\dag}$ & $[1,90,284,94,1]$ & $[1,893,13714,31436,14273,973,1]$ & $\cdots$ \\
$s=4$ & $[1,12,3]$ & $[1,205,859,381,9]$ & $\cdots$ & $\cdots$ \\
\bottomrule
\end{tabular}
\end{center}
The marked case is Gorenstein because $\cP_2^{(3)}=\mathcal T_3$
(Proposition~\ref{prop-d2-gorenstein}); all others are not.
Our computations show $\cP_{2r}^{(2)}$ is not Gorenstein for $r\le6$, and
for $s=3$ only $r=1$ is Gorenstein among $r\le4$.
\end{remark}

\begin{remark}[Unimodality and real-rootedness]\label{rem-unimodal}
The $h^*$-vectors in Table~\ref{tab-hstar-s1} are unimodal for all displayed
$d$. Whether $h_d^{(s)}(z)$ is real-rooted, or at least unimodal, for all
$d$ and $s$ remains open; see \cite{BrandenSolus,Braun}.
\end{remark}

\begin{remark}[Small even-dimensional $h^*$-polynomials for $s=1$]\label{rem-small-gamma}
The first even-dimensional cases in Table~\ref{tab-hstar-s1} are palindromic
and unimodal, in agreement with Theorem~\ref{thm-gorenstein-s1}.  Expressed in
the standard symmetric basis, they have the following nonnegative expansions:
\begin{center}
\begin{tabular}{ccl}
\toprule
$r$ & $h^*(\cP_{2r}^{(1)},z)$ & symmetric-basis coefficients\\
\midrule
$1$ & $1$ & $(1)$\\
$2$ & $1+4z+z^2$ & $(1,2)$\\
$3$ & $1+20z+48z^2+20z^3+z^4$ & $(1,16,10)$\\
$4$ & $1+72z+603z^2+1168z^3+603z^4+72z^5+z^6$ & $(1,66,324,104)$\\
\bottomrule
\end{tabular}
\end{center}
We do not use these computations in the proof of the Gorenstein theorem.  A
uniform proof of real-rootedness or $\gamma$-positivity for all $r$ would be an
interesting strengthening; see the unimodality question in Section~\ref{sec-conclusion}.
\end{remark}

\subsubsection*{The first few Ehrhart series}

For reference:
\begin{align*}
\Ehr_{\cP_2^{(1)}}(z) &= \frac{1}{(1-z)^3},
\quad\bigl[h^*(1)=1=2!/2\bigr],\\[2pt]
\Ehr_{\cP_3^{(1)}}(z) &= \frac{1+4z}{(1-z)^4},
\quad\bigl[h^*(1)=5\bigr],\\[2pt]
\Ehr_{\cP_4^{(1)}}(z) &= \frac{1+4z+z^2}{(1-z)^5},
\quad\bigl[h^*(1)=6=4!/4\bigr],\\[2pt]
\Ehr_{\cP_5^{(1)}}(z) &= \frac{1+18z+27z^2+4z^3}{(1-z)^6},
\quad\bigl[h^*(1)=50\bigr],\\[2pt]
\Ehr_{\cP_6^{(1)}}(z) &= \frac{1+20z+48z^2+20z^3+z^4}{(1-z)^7},
\quad\bigl[h^*(1)=90=6!/8\bigr],\\[2pt]
\Ehr_{\cP_8^{(1)}}(z) &= \frac{1+72z+603z^2+1168z^3+603z^4+72z^5+z^6}{(1-z)^9},
\quad\bigl[h^*(1)=2520=8!/16\bigr].
\end{align*}
Note $\cP_2^{(1)}=\mathcal T_1$. Palindromicity for $d=4,6,8$ is apparent;
normalized volumes agree with Corollary~\ref{cor-volume}(b).

\subsection{Non-Gorenstein behavior for \texorpdfstring{$s\ge2$}{s>=2}}\label{subsec-non-gorenstein}
\label{subsec-gorenstein}

We now turn to the complementary phenomenon. Section~\ref{subsec-s1}
showed that $s=1$ yields an infinite Gorenstein family, but for every $s\ge2$
the general formulas of \S\S\ref{subsec-fixedm-transfer}--\ref{subsec-consequences}
show that the family fails to be Gorenstein in at least one even dimension.
By the Stanley--Hibi theorem \cite{hibi-1,Stanley1978}, this is equivalent to
non-palindromicity of the corresponding $h^*$-polynomial.

\begin{proposition}[Gorenstein structure in dimension $2$]\label{prop-d2-gorenstein}
For every $s\ge1$, $\cP_2^{(s)}=\mathcal T_s:=\{(x_1,x_2)\in\R_{\ge0}^2:x_1+x_2\le s\}$ (the only active constraint is
$x_1+x_2\le s$), so $L_2^{(s)}(m)=\binom{sm+2}{2}$ and
\begin{equation}\label{eq-hstar-d2-general}
h^*(\cP_2^{(s)},z)
= 1+\frac{s^2+3s-4}{2}\,z+\frac{(s-1)(s-2)}{2}\,z^2.
\end{equation}
The polynomial \eqref{eq-hstar-d2-general} is palindromic if and only if
$s\in\{1,3\}$. Hence $\cP_2^{(s)}$ is Gorenstein if and only if $s\in\{1,3\}$.
\end{proposition}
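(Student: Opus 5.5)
The plan is to argue directly from the definition \eqref{eq-poly-def} with $d=2$. There is exactly one adjacent-sum constraint, $i=1$ (odd, so $\delta_1=0$), so
\[
\cP_2^{(s)}=\{(x_1,x_2)\in\R_{\ge0}^2:x_1+x_2\le s\}=\mathcal T_s .
\]
Dilating gives $m\mathcal T_s=\mathcal T_{sm}$. Counting lattice points row by row, with $x_1=a$ leaving $sm-a+1$ choices for $x_2$, gives
\[
L_2^{(s)}(m)=\sum_{a=0}^{sm}(sm-a+1)=\binom{sm+2}{2}.
\]

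For the $h^*$-polynomial I will use Corollary~\ref{cor-hdk-low} (equivalently Lemma~\ref{lem-hstar-inversion}) with $d=2$:
\[
h_{2,0}=1,\qquad h_{2,1}=L(1)-3,\qquad h_{2,2}=L(2)-3L(1)+3,
\]
where $L=L_2^{(s)}$. Substituting $L(1)=\binom{s+2}{2}=\tfrac{(s+1)(s+2)}{2}$ and $L(2)=\binom{2s+2}{2}=(s+1)(2s+1)$ gives
\[
h_{2,1}=\frac{s^2+3s+2-6}{2}=\frac{s^2+3s-4}{2},
\]
and
\[
h_{2,2}=(2s^2+3s+1)-\frac{3(s^2+3s+2)}{2}+3=\frac{s^2-3s+2}{2}=\frac{(s-1)(s-2)}{2}.
\]
As a cross-check, the sum $1+h_{2,1}+h_{2,2}=s^2$ equals $2!\,\vol(\mathcal T_s)$. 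Also $h_{2,3}=0$, since the degree is at most $d=2$.

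Palindromicity is the remaining step, and it needs a case split because the degree of the polynomial can drop.
\begin{itemize}
\item If $(s-1)(s-2)\neq0$, i.e.\ $s\ge3$, the degree is $2$. Palindromicity then means $h_{2,2}=h_{2,0}=1$, i.e.\ $(s-1)(s-2)=2$, i.e.\ $s^2-3s=0$, so $s=3$. In that case the polynomial is $1+7z+z^2$.
\item If $s=1$, then $h^*=1$ (here $h_{2,1}=0$ as well), which is trivially palindromic.
\item If $s=2$, then $h^*=1+3z$, which has degree $1$ and is not palindromic since $3\neq1$.
\end{itemize}
So the polynomial is palindromic exactly for $s\in\{1,3\}$.

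The Gorenstein conclusion then follows from the Stanley--Hibi criterion cited above: a lattice polytope is Gorenstein if and only if its $h^*$-polynomial is palindromic.

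The only delicate point, and the main obstacle, is the meaning of "palindromic" when the degree drops. I will interpret it as symmetry $h_k=h_{\deg-k}$ about the actual degree, which is the form Stanley's criterion takes, so that $s=1$ counts as palindromic and $s=2$ does not. For extra safety in the Gorenstein direction, I would note directly that $\mathcal T_1$ and $\mathcal T_3$ are reflexive-type simplices. $\mathcal T_1$ was handled in the proof of Theorem~\ref{thm-gorenstein-s1}. For $\mathcal T_3$, the unique interior point of $\mathcal T_3$ is $(1,1)$, and the translate $\mathcal T_3-(1,1)$ is reflexive, so $\mathcal T_3$ is Gorenstein of index $1$.
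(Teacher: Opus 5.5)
Your proposal is correct and follows essentially the same route as the paper: identify $\cP_2^{(s)}=\mathcal T_s$, count $L_2^{(s)}(m)=\binom{sm+2}{2}$, apply the inversion formula with $d=2$, and split into the degree-two case (forcing $s=3$) and the degenerate cases $s=1,2$ before invoking the Stanley--Hibi criterion. Your volume cross-check and the direct reflexivity argument for $\mathcal T_3$ are correct supplements that the paper does not include.
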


\begin{proof}
In dimension $2$ the defining system contains only the inequality
$x_1+x_2\le s$ together with $x_1,x_2\ge0$, so
$\cP_2^{(s)}=\mathcal T_s$.  The Ehrhart polynomial is therefore
\[
L_2^{(s)}(m)=\#\{(u,v)\in\Z_{\ge0}^2:u+v\le sm\}=\binom{sm+2}{2}.
\]
Using the inversion formula \eqref{eq-hstar-inversion} with $d=2$, we get
\[
h_0^*=L_2^{(s)}(0)=1,
\]
\[
h_1^*=L_2^{(s)}(1)-3L_2^{(s)}(0)
=\binom{s+2}{2}-3=\frac{s^2+3s-4}{2},
\]
and
\[
h_2^*=L_2^{(s)}(2)-3L_2^{(s)}(1)+3L_2^{(s)}(0)
=\binom{2s+2}{2}-3\binom{s+2}{2}+3
=\frac{(s-1)(s-2)}{2}.
\]
This proves \eqref{eq-hstar-d2-general}.

If the $h^*$-polynomial has degree $2$, palindromicity requires
$h_0^*=h_2^*$, hence
\[
1=\frac{(s-1)(s-2)}{2},
\]
which gives $s=3$ among positive integers.  When $s=1$, the coefficient
$h_2^*$ is zero and the polynomial is simply $1$, so it is palindromic of
degree $0$.  For $s=2$, the polynomial is $1+3z$, not palindromic.  Thus
palindromicity occurs exactly for $s\in\{1,3\}$.  By the Stanley--Hibi
criterion recalled at the beginning of this subsection, this is equivalent to
the Gorenstein property for these two-dimensional lattice polytopes.
\end{proof}

\begin{proposition}[Non-Gorenstein in dimension $4$ for $s=3$]\label{prop-d4-s3-not-gorenstein}
One has $h^*(\cP_4^{(3)},z)=1+90z+284z^2+94z^3+z^4$.
In particular, $\cP_4^{(3)}$ is not Gorenstein.
\end{proposition}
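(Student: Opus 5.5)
The plan is to compute the Ehrhart polynomial $L_4^{(3)}(m)$ in closed form from Theorem~\ref{thm-Gm-general}, and then read off the $h^*$-coefficients with the inversion formula of Lemma~\ref{lem-hstar-inversion} (equivalently Corollary~\ref{cor-hdk-low}). Non-Gorensteinness then follows from the Stanley--Hibi criterion, since the resulting polynomial is not palindromic.

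\emph{Step 1: a closed form for $L_4^{(3)}(m)$.} By Theorem~\ref{thm-Gm-general}, the coefficient of $z^1$ in $\mathcal E_m^{(3)}(z)$ is $L_4^{(3)}(m)$. Expanding
\[
\mathcal E_m^{(3)}(z)=\frac{a_1-a_2z+a_3z^2}{1-a_1z+a_2z^2-a_3z^3}
\]
to first order gives $[z^1]\mathcal E_m^{(3)}=a_1^2-a_2$, with $a_j=a_j^{(3,m)}$. Here $a_1^{(3,m)}=\binom{3m+2}{2}$ and $a_2^{(3,m)}=\binom{2m+3}{4}$. Hence
\[
L_4^{(3)}(m)=\binom{3m+2}{2}^2-\binom{2m+3}{4}.
\]
As a sanity check I would also derive this directly. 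One counts pairs $(x_1,x_2)$, $(x_3,x_4)$ from two copies of $3m\mathcal T_1$, and subtracts those violating $x_2+x_3\le 4m$; that defect count is the binomial $\binom{2m+3}{4}$. At $m=1$ the formula gives $100-5=95=N_4(3)$, which matches Example~\ref{ex-small-cases}.

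\emph{Step 2: evaluate at $m=0,\dots,4$.} This yields $L_4^{(3)}(0..4)=1,95,749,2899,7951$.

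\emph{Step 3: inversion.} Apply $h_{4,k}=\sum_{i=0}^k(-1)^i\binom{5}{i}L_4^{(3)}(k-i)$. This gives $h_{4,1}=95-5=90$ and $h_{4,2}=749-475+10=284$. Next, $h_{4,3}=2899-3745+950-10=94$. Finally, $h_{4,4}=7951-14495+7490-950+5=1$.

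\emph{Step 4: conclude.} The resulting polynomial $1+90z+284z^2+94z^3+z^4$ has degree $4$ with $h_0=h_4=1$. Palindromicity would force $h_1=h_3$, but $90\ne94$. By the Stanley--Hibi criterion invoked at the start of \S\ref{subsec-gorenstein}, $\cP_4^{(3)}$ is therefore not Gorenstein.

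The only real obstacle is Step~1, namely correctly extracting the first-order coefficient and the specific binomials $a_1,a_2$. Everything after that is arithmetic. I would guard against errors by matching the value at $m=1$ with the known count $95$, and by checking that the final $h^*$-vector sums to $4!\,\vol(\cP_4^{(3)})=470$, which the leading term of $L_4^{(3)}$ confirms.
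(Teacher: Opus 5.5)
Your proposal is correct and matches the paper's proof: extract $L_4^{(3)}(m)=[z]\mathcal E_m^{(3)}(z)=\binom{3m+2}{2}^2-\binom{2m+3}{4}$ from Theorem~\ref{thm-Gm-general}, apply the inversion formula with $d=4$, and note that $90\ne94$. The paper expands the quartic instead of evaluating it at $m=0,\dots,4$, which is only a cosmetic difference, and your direct defect count is sound: it equals $\sum_{n=0}^{2m-1}\binom{n+3}{3}=\binom{2m+3}{4}$, so it verifies Step~1 independently of the general theorem.
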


\begin{proof}
For $s=3$, Theorem~\ref{thm-Gm-general} gives the even-dimensional series
\[
\mathcal E_m^{(3)}(z)
=\frac{a_1^{(3,m)}-a_2^{(3,m)}z+a_3^{(3,m)}z^2}{
1-a_1^{(3,m)}z+a_2^{(3,m)}z^2-a_3^{(3,m)}z^3}.
\]
The coefficient of $y^4$ in $G_m^{(3)}(y)$ is the coefficient of $z$ in
$\mathcal E_m^{(3)}(z)$, because $G_m^{(3)}(y)$ contains
$y^2\mathcal E_m^{(3)}(y^2)$.  Expanding the rational function to first order
therefore gives
\[
L_4^{(3)}(m)=[z]\mathcal E_m^{(3)}(z)
=(a_1^{(3,m)})^2-a_2^{(3,m)}.
\]
Here
\[
a_1^{(3,m)}=\binom{3m+2}{2},
\qquad
a_2^{(3,m)}=\binom{2m+3}{4}.
\]
Thus
\[
L_4^{(3)}(m)=\binom{3m+2}{2}^2-\binom{2m+3}{4}
=\frac{235}{12}m^4+\frac{77}{2}m^3+\frac{329}{12}m^2+\frac{17}{2}m+1.
\]
Applying \eqref{eq-hstar-inversion} with $d=4$ gives
\[
h^*(\cP_4^{(3)},z)=1+90z+284z^2+94z^3+z^4.
\]
This polynomial is not palindromic because the coefficients of $z$ and $z^3$
are different.  Hence $\cP_4^{(3)}$ is not Gorenstein by the Stanley--Hibi
palindromicity criterion.
\end{proof}

\begin{proposition}[Non-Gorenstein in dimension $6$ for $s=3$]\label{prop-d6-s3-not-gorenstein}
One has
\[
h^*(\cP_6^{(3)},z)=1+893z+13714z^2+31436z^3+14273z^4+973z^5+z^6.
\]
In particular, $\cP_6^{(3)}$ is not Gorenstein.
\end{proposition}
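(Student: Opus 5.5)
I will follow the template of Proposition~\ref{prop-d4-s3-not-gorenstein}: obtain $L_6^{(3)}(m)$ as a polynomial in $m$ from Theorem~\ref{thm-Gm-general}, then apply the inversion formula of Lemma~\ref{lem-hstar-inversion} with $d=6$.

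\emph{Step 1: a formula for $L_6^{(3)}(m)$.}
By \eqref{eq-Gms-general}, $L_6^{(3)}(m)$ is the coefficient of $y^6$ in $G_m^{(3)}(y)$. This equals $[z^2]\mathcal E_m^{(3)}(z)$. Write $a_j=a_j^{(3,m)}$. Expanding
\[
\mathcal E_m^{(3)}(z)=\frac{a_1-a_2z+a_3z^2}{1-a_1z+a_2z^2-a_3z^3}
\]
to second order gives
\[
L_6^{(3)}(m)=a_1^3-2a_1a_2+a_3 .
\]
Equivalently, one can use the parity recurrence of Corollary~\ref{cor-parity-rec}, started from $E_0=a_1$ and $E_1=a_1^2-a_2$. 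The three coefficients are
\[
a_1=\tbinom{3m+2}{2},\qquad a_2=\tbinom{2m+3}{4},\qquad a_3=\tbinom{m+4}{6}.
\]
Hence $L_6^{(3)}(m)$ is an explicit polynomial of degree $6$ in $m$. The cubic $a_1^3$ has degree $6$, and so do the other two terms.

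\emph{Step 2: the $h^*$-polynomial.}
The inversion formula $h_{6,k}^{(3)}=\sum_{i=0}^{k}(-1)^i\binom{7}{i}L_6^{(3)}(k-i)$ needs only the values $L_6^{(3)}(m)$ for $m=0,\dots,6$, so the polynomial form is not required. These values come directly from Step 1.
\begin{itemize}
\item For $m=0$ we get $1-2\cdot1\cdot0+0=1$.
\item For $m=1$ we get $10^3-2\cdot10\cdot5+1=901$. This is consistent with $N_6(3)=900$ up to the auxiliary boundary term; we will recheck it, since the $a_3$ term at $m=1$ vanishes by Remark~\ref{rem-delta-degenerate}, giving $1000-100=900$.
\item The values for $m=2,\dots,6$ are computed the same way.
\end{itemize}
As sanity checks, the coefficients must sum to the normalized volume $6!\,\vol$, obtainable from Corollary~\ref{cor-vol-general}. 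We also expect $h_{6,6}^{(3)}=1$, matching the single interior point count, and $h_{6,1}^{(3)}=L_6^{(3)}(1)-7=893$.

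\emph{Step 3: conclusion.}
Once the claimed vector $(1,893,13714,31436,14273,973,1)$ is confirmed, non-palindromicity is immediate: $h_1=893\neq973=h_5$. The Stanley--Hibi criterion, as in Proposition~\ref{prop-d4-s3-not-gorenstein}, then shows that $\cP_6^{(3)}$ is not Gorenstein.

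The main obstacle is purely arithmetic: evaluating $a_1^3-2a_1a_2+a_3$ correctly, and in particular getting the binomial boundary behaviour of $a_3=\binom{m+4}{6}$ right for small $m$. A slip here corrupts $h_4$ and $h_5$. I would cross-check by verifying that the leading coefficient of $L_6^{(3)}$ matches the volume from Corollary~\ref{cor-vol-general}.
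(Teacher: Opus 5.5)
Your proposal is correct and is essentially the paper's proof: both read off $L_6^{(3)}(m)=[z^2]\mathcal E_m^{(3)}(z)=a_1^3-2a_1a_2+a_3$ from Theorem~\ref{thm-Gm-general} and then apply the inversion formula with $d=6$. Evaluating at $m=0,\dots,6$ (giving $1,900,19993,152522,693539,2321060,6340510$) instead of expanding the degree-six polynomial is an inessential variation, and it does reproduce $(1,893,13714,31436,14273,973,1)$.

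Two side remarks need cleaning up. First, at $m=1$ there is no ``auxiliary boundary term'' in the even series: simply $a_3^{(3,1)}=\binom{5}{6}=0$, so $L_6^{(3)}(1)=900$ directly. Second, Corollary~\ref{cor-parity-rec} only asserts the recurrence for $r\ge s=3$, so it does not generate $E_2$ from $E_0,E_1$. Applied naively at $r=2$ it drops the $+a_3$ term, unless you adjoin the auxiliary value $E_{-1}=1$ coming from $1+z\mathcal E_m^{(3)}(z)=1/\Delta_{3,m}(z)$.
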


\begin{proof}
By Theorem~\ref{thm-Gm-general} with $s=3$, the denominator of
$\mathcal E_m^{(3)}(z)$ is
\[
\Delta_{3,m}(z)=1-a_1z+a_2z^2-a_3z^3,
\]
where
\[
a_1=a_1^{(3,m)}=\binom{3m+2}{2},
\qquad
a_2=a_2^{(3,m)}=\binom{2m+3}{4},
\qquad
a_3=a_3^{(3,m)}=\binom{m+4}{6}.
\]
The numerator of $\mathcal E_m^{(3)}(z)$ is $a_1-a_2z+a_3z^2$.  Since
\[
\frac{1}{1-a_1z+a_2z^2-a_3z^3}
=1+a_1z+(a_1^2-a_2)z^2+O(z^3),
\]
the coefficient of $z^2$ in $\mathcal E_m^{(3)}(z)$ is
\[
[z^2]\mathcal E_m^{(3)}(z)
=a_1(a_1^2-a_2)-a_1a_2+a_3=a_1^3-2a_1a_2+a_3.
\]
Expanding the expression above and simplifying gives
\begin{equation}\label{eq-L6s3}
L_6^{(3)}(m)
=\frac{61291}{720}m^6+\frac{19951}{80}m^5+\frac{42959}{144}m^4+\frac{9007}{48}m^3
+\frac{23777}{360}m^2+\frac{187}{15}m+1.
\end{equation}
Applying \eqref{eq-hstar-inversion} with $d=6$ to \eqref{eq-L6s3} yields
\[
h^*(\cP_6^{(3)},z)=1+893z+13714z^2+31436z^3+14273z^4+973z^5+z^6.
\]
This polynomial is not palindromic because $893\ne973$.  Hence
$\cP_6^{(3)}$ is not Gorenstein.  As a consistency check, the sum of the
$h^*$-coefficients is
\[
1+893+13714+31436+14273+973+1=61291,
\]
which agrees with the normalized volume
\[
6!\,\vol(\cP_6^{(3)})=720\cdot\frac{61291}{720}=61291.
\]
\end{proof}

\begin{corollary}[Failure of Gorenstein for all $s\ge2$, refined]\label{cor-not-gorenstein-all-s}
For every integer $s\ge2$, there exists an even dimension $d$ such that
$\cP_d^{(s)}$ is not Gorenstein.  More precisely:
\begin{itemize}
\item For $s\ne3$: the failure occurs at $d=2$ (Proposition~\ref{prop-d2-gorenstein}).
\item For $s=3$: $\cP_2^{(3)}$ is Gorenstein (the only even-dimensional case
with $d\le6$), but $\cP_4^{(3)}$ and $\cP_6^{(3)}$ are not Gorenstein.
\end{itemize}
\end{corollary}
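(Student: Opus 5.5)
The corollary is an assembly of the three preceding propositions, so I will argue case by case on $s$ rather than introduce anything new. The existence claim follows from the refined bullets. For $s=2$ I will take $d=2$ and for $s=3$ I will take $d=4$; in both cases $d$ is even, as required.

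\emph{Case $s\ge2$, $s\ne3$.} I apply Proposition~\ref{prop-d2-gorenstein}, which says that $\cP_2^{(s)}$ is Gorenstein exactly when $s\in\{1,3\}$. Concretely, $h^*(\cP_2^{(s)},z)=1+\tfrac{s^2+3s-4}{2}z+\tfrac{(s-1)(s-2)}{2}z^2$.
\begin{itemize}
\item For $s=2$ the top coefficient vanishes and the polynomial is $1+3z$, which is not palindromic.
\item For $s\ge4$ the polynomial has degree $2$, but $\tfrac{(s-1)(s-2)}{2}\ge3\ne1=h_0^*$, so it is not palindromic.
\end{itemize}
The Stanley--Hibi criterion then gives the failure of the Gorenstein property at $d=2$.

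\emph{Case $s=3$.} By Proposition~\ref{prop-d2-gorenstein}, $\cP_2^{(3)}=\mathcal T_3$ has $h^*=1+7z+z^2$, which is palindromic, so $\cP_2^{(3)}$ is Gorenstein. Propositions~\ref{prop-d4-s3-not-gorenstein} and~\ref{prop-d6-s3-not-gorenstein} give $h^*$-polynomials with $h_1^*=90\ne94=h_3^*$ and $h_1^*=893\ne973=h_5^*$, respectively. Hence $\cP_4^{(3)}$ and $\cP_6^{(3)}$ are not Gorenstein. The parenthetical claim that $d=2$ is the only Gorenstein even dimension with $d\le6$ is exactly the conjunction of these three facts.

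The main obstacle is not logical. It lies in the correctness of the explicit $h^*$-polynomials in the $s=3$ propositions, since those rest on expanding $\mathcal E_m^{(3)}$ and applying the inversion formula. To guard against arithmetic slips, I would cross-check two things: that $h^*(1)$ equals the normalized volume, using Corollary~\ref{cor-vol-general}; and that $L_4^{(3)}(1)=95$, which matches $N_4(3)$ from Example~\ref{ex-small-cases}. Nothing further is needed.
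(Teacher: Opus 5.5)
Your proposal is correct and takes the same approach as the paper: a case split on $s$, citing Proposition~\ref{prop-d2-gorenstein} for $s\ne3$ and Propositions~\ref{prop-d4-s3-not-gorenstein} and~\ref{prop-d6-s3-not-gorenstein} for $s=3$. Your opening sentence names $d=2$ only for $s=2$, but your case analysis correctly covers every $s\ge2$ with $s\ne3$.
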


\begin{proof}
If $s\ne3$, Proposition~\ref{prop-d2-gorenstein} shows that
$\cP_2^{(s)}$ is not Gorenstein.  If $s=3$, Proposition~\ref{prop-d4-s3-not-gorenstein}
shows that $\cP_4^{(3)}$ is not Gorenstein.  This proves the existence claim
for every $s\ge2$.  The additional assertion for $\cP_6^{(3)}$ follows from
Proposition~\ref{prop-d6-s3-not-gorenstein}.
\end{proof}

\begin{remark}
The computations above suggest a stronger statement in the exceptional case
$s=3$: the case $d=2$ may be the unique even-dimensional Gorenstein example.
This is recorded as part of Question~\ref{q-gorenstein} below.
\end{remark}

\section{Conclusion and open problems}\label{sec-conclusion}

We have studied an alternating analogue of the adjacent-sum polytope family.
The transfer-matrix approach yields explicit rational formulas for $\Phi_s(y)$
and $\Psi_s(y)$, the coupled recurrences of Theorem~\ref{thm-recurrence}, and
the trigonometric closed forms of Theorem~\ref{thm-tangent}. The pole structure
is determined explicitly, giving the asymptotic growth in both parities.
For every fixed $m$, Theorem~\ref{thm-Gm-general} further yields rational
formulas for the parity-split dimension-generating functions, together with the
recurrences of order at most $s$ in Corollary~\ref{cor-parity-rec} and the rational volume
generating functions of Corollary~\ref{cor-vol-general}.

On the Ehrhart side, $\cP_{2r}^{(1)}$ is Gorenstein for all $r\ge1$
(Theorem~\ref{thm-gorenstein-s1}). In contrast, for every $s\ge2$ there is at
least one even dimension in which the Gorenstein property fails; for all
$s\neq3$ this already happens in dimension $2$, while for $s=3$ it fails in
dimension $4$.

Finally, for the cyclic variant---obtained by closing the alternating path with
the constraint $x_d+x_1\le s+\delta_d$---Theorem~\ref{thm-cyclic-intro} gives
both cyclic parity series.  The even-dimensional series has the simple form
\[
\Psi_s^{\mathrm{cyc}}(y)=\frac{(s+2)P_{s+1}(-y)}{2Q_{s+2}(-y)},
\]
while the odd-dimensional series is
\[
\Omega_s^{\mathrm{cyc}}(y)=
\frac{\mathcal{R}_s(y)-\bigl(\lfloor s/2\rfloor+1\bigr)Q_{s+2}(-y)}{yQ_{s+2}(-y)}.
\]
Both have the same denominator $Q_{s+2}(-y)$ as the open-path series and share
the same dominant pole.  The normalized even-dimensional cyclic series satisfies
a clean M\"obius-type recurrence, equivalently giving the recurrence for
$\Psi_s^{\mathrm{cyc}}(y)$ itself in Theorem~\ref{thm-cyclic-recurrence}; both
cyclic parity classes have the trigonometric forms of
Theorem~\ref{thm-cyclic-trig}.

A further direction, complementary to the analytic formulas proved here, is
systematic computer verification in small parameters. The fixed-dimension
Ehrhart polynomials, $h^*$-vectors, and normalized volumes of
$\cP_d^{(s)}$ are especially well suited to experiments with packages such as
\textsc{Normaliz}, \textsc{polymake}, or the \textsc{Sage} interfaces to
lattice-polytope computations. Such experiments would help test the
real-rootedness and unimodality questions below, and may also suggest sharper
conjectures for the denominator polynomials $\Delta_{s,m}(z)$ and for the
higher-period generalizations mentioned in Q5.

The first two problems below are most directly connected to the main results
of this paper; the remaining four point toward natural extensions.

\begin{enumerate}[label=\textbf{Q\arabic*.},leftmargin=*]

\item \label{q-gorenstein}\textbf{Characterization of Gorenstein pairs.}
Theorem~\ref{thm-gorenstein-s1}, Proposition~\ref{prop-d2-gorenstein}, and
Corollary~\ref{cor-not-gorenstein-all-s} show that $s=1$ yields an infinite
Gorenstein family, whereas every $s\ge2$ eventually fails.  For $s=3$, direct
computation (Propositions~\ref{prop-d4-s3-not-gorenstein}
and~\ref{prop-d6-s3-not-gorenstein}) shows that only $d=2$ is Gorenstein in
even dimensions $d\le6$; we conjecture this extends to all $d\ge4$.
\textit{Characterize all $(s,r)$ with $s\ge2$ for which $\cP_{2r}^{(s)}$ is
Gorenstein.}

\item \label{q-unimodality}\textbf{Unimodality and real-rootedness.}
The even-dimensional $s=1$ examples are palindromic and unimodal in all cases
computed in Table~\ref{tab-hstar-s1}.  Is $h_d^{(s)}(z)$ unimodal for all
$d$ and $s$?  When is it real-rooted, and do the roots interlace as $d$
increases?

\item \textbf{Structure of the fixed-$m$ denominator polynomials.}
Theorem~\ref{thm-Gm-general} governs the parity-split series by the
polynomial $\Delta_{s,m}(z)$ of degree at most $s$.  Describe its roots, factorization
patterns, and asymptotics as $s$ or $m$ varies.

\item \textbf{Combinatorial interpretations.}
The coefficients $h_{d,k}^{(s)}$ are nonneg\-ative by Stanley's theorem.
Do they count a natural combinatorial statistic, analogous to the descent count
for order polytopes \cite{StanleyTwo}?

\item \textbf{General periodic constraints.}
Replace the period-$2$ alternation by a period-$T$ sequence.
The $T$-step transfer matrix approach applies immediately; for general $T$
one may expect closed forms in terms of algebraic functions or Chebyshev-type
polynomials.

\item \textbf{$q$-Analogues.}
Define $\mathcal{N}_d^{(s)}(q):=\sum_{x\in\cP_d^{(s)}\cap\Z^d}q^{|x|}$,
where $|x|=x_1+\cdots+x_d$.
Does the generating function $\sum_{d\ge2}\mathcal{N}_d^{(s)}(q)\,x^d$ admit
a $q$-analogue of \eqref{eq-Phi-tan-main} in terms of $q$-tangent numbers?

\end{enumerate}

\bigskip

\noindent\textbf{Acknowledgments.}
The authors thank Professor Guoce Xin for helpful discussions.
This work was supported by the National Natural Science Foundation of China
(grant No.\,12401441) and the Natural Science Foundation of Hunan Province
(grant No.\,2025JJ60010).

\end{document}